\numberwithin{equation}{section} \numberwithin{equation}{section}
\newtheorem{thm}{Theorem}[section]
\newtheorem{lem}{Lemma}[section]
\newtheorem{corr}{Corollary}[section]
\newtheorem{rem}{Remark}[section]
\newtheorem*{ack}{Acknowledgment}
\renewcommand{\oddsidemargin}{5mm}
\title[Eigenvalue Inequalities for the Clamped Plate Problem]{Eigenvalue Inequalities for the \\ Clamped Plate Problem of $\mathfrak{L}^{2}_{\nu}$ Operator}
\author[L. Zeng ]{Lingzhong Zeng }
\address{Lingzhong Zeng
\\  \newline \indent College of Mathematics and Statistics
\\  \newline \indent   Jiangxi Normal University, Nanchang 330022,  China. lingzhongzeng@yeah.net }
\begin{document}
\maketitle

\begin{abstract}   $\mathfrak{L}_{II}$ operator is introduced by Y.-L. Xin (\emph{Calculus of Variations and
Partial Differential Equations. 2015, \textbf{54}(2):1995-2016)}, which is an important  extrinsic elliptic differential operator of divergence type and has profound geometric meaning.  In this paper, we extend $\mathfrak{L}_{II}$ operator to more general  elliptic differential operator $\mathfrak{L}_{\nu}$, and investigate the clamped plate problem of bi-$\mathfrak{L}_{\nu}$ operator, which is denoted by $\mathfrak{L}_{\nu}^{2}$, on the complete Riemannian manifolds.  A general formula of eigenvalues for the $\mathfrak{L}_{\nu}^{2}$ operator is established. Applying this formula, we estimate the eigenvalues with lower order on the Riemannian manifolds. As some further applications, we establish some eigenvalue inequalities for this operator on the translating solitons with respect to the mean curvature flows, submanifolds of the Euclidean spaces, unit spheres and projective spaces. In particular,  for the case of translating solitons, all of the eigenvalue inequalities are universal.
\end{abstract}

\footnotetext{{\it Key words and phrases}: mean curvature flows; $\mathfrak{L}_{\nu}^{2}$  operator; clamped plate problem;
eigenvalues; Riemannian manifolds; translating solitons.} \footnotetext{2010
\textit{Mathematics Subject Classification}:
 35P15, 53C40.}

\section{Introduction}
Suppose that $(\mathcal{M}^{n},g)$ is an $n$-dimensional, complete, noncompact Riemannian manifold with smooth metric $g$ and $\Omega$ is a bounded domain with piecewise smooth boundary $\partial\Omega$ on $\mathcal{M}^{n}$. We consider the following fixed membrane problem of Laplacian on $\mathcal{M}^{n}$:

\begin{equation}\label{Laplace-prob} {\begin{cases} \
 \Delta u  =-\lambda u , \ \ & {\rm in} \ \ \ \ \Omega, \\
  \ u=0, \ \ & {\rm on} \ \ \partial \Omega,
\end{cases}}\end{equation}where $\Delta$ denotes the Laplacian on the Riemannian manifold  $\mathcal{M}^{n}$.
Let $\lambda_{k}$ denote the $k^{th}$ eigenvalue, and then the spectrum of the eigenvalue problem \eqref{Laplace-prob} is
discrete and satisfies
\begin{equation*}
0<\lambda_{1}\leq\lambda_{2}\leq\cdots\leq\lambda_{k}\leq\cdots\rightarrow+\infty,
\end{equation*}
where each eigenvalue is repeated according to its multiplicity.
Supposing that $\Omega$ is a bounded domain on the two dimensional plane $\mathbb{R}^{2}$, eigenvalue problem \eqref{Laplace-prob} describes an interesting physical phenomenon of two-dimensional membrane vibration. For this case, Payne, P\'{o}lya and Weinberger \cite{PPW2} investigated Dirichlet problem \eqref{Laplace-prob} of Laplacian and proved that

\begin{equation}\label{ppw-2}\lambda_{2}+\lambda_{3}\leq6\lambda_{1}\end{equation}
in 1956.
Furthermore, they proposed a famous conjecture for
$\Omega\subset\mathbb{R}^{n}$ as follows:

\begin{equation}
\frac{\lambda_{2} +\lambda_{3} +\cdots+ \lambda_{n+1}}{
\lambda_{1}}\leq n\frac{\lambda_{2}(\mathbb{B}^{n})}{\lambda_{1}(\mathbb{B}^{n})},\end{equation}where $\lambda_{i}(\mathbb{B}^{n})(i=1,2)$ denotes the $i^{th}$ eigenvalue of Laplacian on  $\mathbb{B}^{n}\subset\mathbb{R}^{n}$, and $\mathbb{B}^{n}$ is an $n$-dimensional ball with the same volume as  $\Omega$, i.e., $Vol(\Omega)=Vol(\Omega^{\ast})$. Attacking this conjecture, Brands \cite{Bran} improved \eqref{ppw-2} to the following: $\lambda_{2}+\lambda_{3}  \leq\lambda_{1}(3 + \sqrt{7}),
$ when $n=2$.
Soon afterwards, Hile and Protter \cite{HP} obtained
$\lambda_{2} +\lambda_{3}
\leq 5.622\lambda_{1}.$ In 1980, Marcellini \cite{Mar} obtained $\lambda_{2}+\lambda_{3} \leq(15 + \sqrt{345})/6\lambda_{1}.$ In 2011, Chen and Zheng \cite{CZ} proved $\lambda_{2}+\lambda_{3} \leq5.3507\lambda_{1}.$ For the general dimension,  Ashbaugh and Benguria
\cite{AB1} established a universal inequality as follows:

\begin{equation}\label{1.16}
\frac{\lambda_{2} +\lambda_{3} +\cdots+ \lambda
_{n+1}}{
\lambda_{1}}\leq n + 4,\end{equation} for $\Omega\subset\mathbb{R}^{n}$ in 1993. As for further references on the solution of
this conjecture, we refer the readers to
\cite{AB2,AB3,Chit,HP,Sun}. In 2008, Sun, Cheng and Yang \cite{SCY}
studied Dirichlet problem \eqref{Laplace-prob} on the bounded domains in
a complex projective space and a unit sphere, and they derived some universal eigenvalue inequalities with lower order. In 2008, Chen and Cheng
\cite{CC} showed that inequality \eqref{1.16} remains true when $\Omega$ is a
bounded domain in a complete Riemannian manifold isometrically
minimally immersed in $\mathbb{R}^{n+p}$.
Furthermore, Ashbaugh and Benguria \cite{AB1} (cf. Hile and Protter \cite{HP} ) improved \eqref{1.16}
to

\begin{equation}\frac{\lambda_{2} + \lambda_{3}+ \cdots+ \lambda_{n+1}}{\lambda_{1}}\leq n + 3 +\frac{\lambda_{1}}{\lambda_{2}}.
\end{equation}
In 2012, Cheng and Qi \cite{CQ} proved that, for any positive integer $j$, where $1\leq j\leq n + 2$, eigenvalues satisfy at
least one of the following universal inequalities:

\begin{equation*}\begin{aligned} {\rm(1)}\ \
\frac{\lambda_{2}}{\lambda_{1}}
< 2-
\frac{\lambda_{1}}{
\lambda
_{j}}, \ \ \
 {\rm(2)}\ \ \frac{\lambda_{2} + \lambda_{3}+ \cdots+ \lambda_{n+1}}{\lambda_{1}}\leq n + 3 +\frac{\lambda_{1}}{\lambda_{j}}.
\end{aligned}\end{equation*}In 2002, Levitin and Parnovski \cite{LP} proved an abstract algebraic inequality. Applying this algebraic inequality, they  generalized  \eqref{1.16}  to
the following eigenvalue inequality: \begin{equation}\label{LP1}
\frac{\lambda_{j+1} +\lambda_{j+2} +\cdots+ \lambda_{j+n}}{
\lambda_{j}}\leq n + 4,
\end{equation}
where $j$ is any positive integer.

To describe vibrations of a clamped plate in elastic mechanics, one usually
consider the following Dirichlet problem of biharmonic operator :

\begin{equation}\label{clamped}\left\{\begin{array}{l}
\Delta^{2} u=\Lambda u, \quad \text { in } \Omega, \\
u=\frac{\partial u}{\partial \textbf{n}}=0,\quad \text { on }\partial \Omega,
\end{array}\right.\end{equation}
where $\Delta$ is the Laplacian in $\mathbb{R} ^{n}$ and $\Delta^{2}$ is the biharmonic operator in $\mathbb{R} ^{n}$, and this eigenvalue problem is called a clamped
plate problem.
In 1956, Payne, P\'{o}lya and Weinberger \cite{PPW2} also established a universal inequality for eigenvalue problem \eqref{clamped}. They obtained
\begin{equation}\label{PPW-ine-2}
\Lambda_{k+1}-\Lambda_{k} \leq \frac{8(n+2)}{n^{2}} \frac{1}{k} \sum_{i=1}^{k} \Lambda_{i}.
\end{equation}
In 1984, by improving Hile and Protter's method in \cite{HP},  Hile and Yeh \cite{HY} obtained
\begin{equation}
\sum_{i=1}^{k} \frac{\Lambda_{i}^{\frac{1}{2}}}{\Lambda_{k+1}-\Lambda_{i}} \geq \frac{n^{2} k^{3 / 2}}{8(n+2)}\left(\sum_{i=1}^{k} \Lambda_{i}\right)^{-1 / 2},
\end{equation} which generalizes the above result obtained by Payne, P\'{o}lya and Weinberger.
Furthermore, in
$1990,$ Hook \cite{Hook}, Chen and Qian \cite{CQian} proved, independently, the following inequality:

\begin{equation}\label{CQH-ineq}
\frac{n^{2} k^{2}}{8(n+2)} \leq\left[\sum_{i=1} \frac{\Lambda_{i}^{\frac{1}{2}}}{\Lambda_{k+1}-\Lambda_{i}}\right] \sum_{i=1}^{k} \Lambda_{i}^{\frac{1}{2}}.
\end{equation}
In 1999,  Ashbaugh pointed out ``\emph{whether one can establish inequalities for eigenvalues of the vibrating clamped plate problem which are analogous inequalities of Yang in the case of the eigenvalue problem of the Laplacian with Dirichlet boundary condition}" in his survey paper \cite{A1}. In 2006, Cheng and Yang \cite{CY3} gave an affirmative answer to the problem posed by  Ashbaugh. Specifically, they obtained the following:
\begin{equation}\Lambda_{k+1}-\frac{1}{k} \sum_{i=1}^{k} \Lambda_{i} \leq\left[\frac{8(n+2)}{n^{2}}\right]^{\frac{1}{2}} \frac{1}{k} \sum_{i=1}^{k}\left[\Lambda_{i}\left(\Lambda_{k+1}-\Lambda_{i}\right)\right]^{\frac{1}{2}},\end{equation}which is sharper than
\begin{equation}\label{1.11-ine}
\Lambda_{k+1} \leq\left[1+\frac{8(n+2)}{n^{2}}\right] \frac{1}{k} \sum_{i=1}^{k} \Lambda_{i}.
\end{equation}
It is easy to see that inequality \eqref{1.11-ine} is better than inequality \eqref{PPW-ine-2} given by Payne, P\'{o}lya and Weinberger. In 1999, Ashbaugh \cite{A1} announced two universal eigenvalue inequalities which are analogous to \eqref{1.16} for any dimension $n$ as follows:

\begin{equation}\sum_{\alpha=1}^{n}\left(\Lambda_{\alpha+1}^{\frac{1}{2}}-\Lambda_{1}^{\frac{1}{2}}\right) \leq 4 \Lambda_{1}^{\frac{1}{2}},\end{equation}and

\begin{equation}\sum_{\alpha=1}^{n}\left(\Lambda_{\alpha+1}-\Lambda_{1}\right) \leq 24 \Lambda_{1}.\end{equation}
Next, we consider that $X : \mathcal{M}^{n}\rightarrow \mathbb{R}^{n+p}$ is an isometric immersion from an $n$-dimensional, oriented, complete
Riemannian manifold $\mathcal{M}^{n}$ to the Euclidean space $\mathbb{R}^{n+p}$, and let $\Omega$ be a bounded domain with smooth boundary $\partial \Omega$ in $\mathcal{M}^{n}$. Assume that$\left\{e_{1}, \ldots, e_{n}\right\}$ is a local orthonormal basis of $\mathcal{M}^{n}$ with respect to the induced Riemannian metric $g$, and $\{e_{n+1}, \ldots, e_{n+p}\}$ is the corresponding local unit orthonormal normal vector fields. Assume that

$$\textbf{H} =\frac{1}{n}\sum_{\alpha=n+1}^{n+p} H^{\alpha} e_{\alpha}=\frac{1}{n}\sum_{\alpha=n+1}^{n+p}\left(\sum_{i=1}^{n} h_{i i}^{\alpha}\right) e_{\alpha},$$ is the mean curvature vector field, and

$$H=| \textbf{H} |=\frac{1}{n} \sqrt{\sum_{\alpha=n+1}^{n+p}\left(\sum_{i=1}^{n} h_{i i}^{\alpha}\right)^{2}},$$ is the mean curvature of $\mathcal{M}^{n}$ in this paper.  Let $\Pi$ denote the set of all isometric immersions from $\mathcal{M}^{n}$ into a Euclidean space. In 2010, Cheng, Huang and Wei \cite{CHW}
proved

\begin{equation}\label{chw-ineq}
\sum^{n}_{\alpha=1}(\Lambda_{\alpha+1}-\Lambda_{1})^{\frac{1}{2}}\leq4\left[\left(\frac{n}{2}+1\right)\Lambda^{\frac{1}{2}}_{1}+C_{0}
\right]^{\frac{1}{2}}\left(\Lambda^{\frac{1}{2}}_{1}+C_{0}\right)^{\frac{1}{2}},
\end{equation} where $$
C_{0}=\frac{1}{4}\inf _{\sigma \in \Pi}\max_{\Omega}\left(n^{2}H^{2}\right).
$$In particular, when $\mathcal{M}^{n}$ is an $n$-dimensional
complete minimal submanifold in a Euclidean space, \eqref{chw-ineq} implies

\begin{equation}\label{chw-ineq-0}\sum^{n}_{\alpha=1}(\Lambda_{\alpha+1}-\Lambda_{1})^{\frac{1}{2}}\leq[8(n+2)\Lambda_{1}]^{\frac{1}{2}} .
\end{equation}
In 2011, Wang and Xia \cite{WX3} investigated the eigenvalues with higher order of bi-harmonic operator on the complete Riemannian manifolds and proved the following inequality:

\begin{equation}\begin{aligned}
\sum_{i=1}^{k}\left(\Lambda_{k+1}-\Lambda_{i}\right)^{2} \leq & \frac{4}{n}\left\{\sum_{i=1}^{k}\left(\Lambda_{k+1}-\Lambda_{i}\right)^{2}\left[\left(\frac{n}{2}+1\right)\Lambda_{i}^{\frac{1}{2}}+C_{0} \right]\right\}^{\frac{1}{2}} \\
& \times\left\{\sum_{i=1}^{k}\left(\Lambda_{k+1}-\Lambda_{i}\right)\left(\Lambda_{i}^{\frac{1}{2}}+C_{0}\right)\right\}^{\frac{1}{2}},
\end{aligned}\end{equation}where
$$
C_{0}=\frac{1}{4}\inf _{\sigma \in \Pi}\max_{\Omega}\left(n^{2}H^{2}\right).
$$
Let $\nu$ be a constant vector field defined on $\mathcal{M}^{n}$.
Throughout this paper, we use $\langle\cdot,\cdot\rangle_{g}$, $|\cdot|_{g}^{2}$, ${\rm div}$, $\Delta$, $\nabla$ and $\nu^{\top}$ to denote the Riemannian inner product with respect to the induced metric $g$, norm associated with the inner product $\langle\cdot,\cdot\rangle_{g}$,  divergence, Laplacian, the gradient operator on $\mathcal{M}^{n}$ and the projective of the vector $\nu$ on the tangent bundle $T\mathcal{M}^{n}$, respectively. Next, we define an elliptic operator on $\mathcal{M}^{n}$ as follows:

\begin{equation} \label{L-v} \mathfrak{L}_{\nu} =\Delta+ \langle\nu,\nabla(\cdot)\rangle_{g_{0}}=e^{-\langle\nu,X\rangle_{g_{0}}}{\rm div}(e^{\langle\nu,X\rangle_{g_{0}}}\nabla(\cdot)),\end{equation}
where $\langle\cdot, \cdot\rangle_{g_{0}}$ denotes the standard inner product of $\mathbb{R}^{n+p}$. Correspondingly, we use $|\cdot|_{g_{0}}$ to denote the norm on $\mathbb{R}^{n+p}$ associated with the standard inner product $\langle\cdot,\cdot\rangle_{g_{0}}$. In particular, we assume that $\nu$ is a unit constant vector defined on a translating soliton in the sense of the means curvature flows and denote it by $\nu_{0}$. Then, the differential operator will be denoted by $\mathfrak{L}_{II}$, which is introduced by Xin in \cite{Xin2} and of important geometric meaning. We refer the readers to section \ref{sec5} for details.

\begin{rem}It can be shown that the elliptic differential operator $\mathfrak{L}_{\nu}$ is a self-adjoint operator with
respect to the weighted measure $e^{\langle\nu,X\rangle_{g_{0}}}dv$.  Namely, for any $u, w
\in C_{1}^{2}(\Omega)$, the following formula holds:
\begin{equation}
\begin{aligned}
\label{1.3} -\int_{\Omega}\langle\nabla u,\nabla w\rangle_{g} e^{\langle\nu,X\rangle_{g_{0}}}dv
=\int_{\Omega}(\mathfrak{L}_{\nu}w)ue^{\langle\nu,X\rangle_{g_{0}}}dv=\int_{\Omega}(\mathfrak{L}_{\nu}u)we^{\langle\nu,X\rangle_{g_{0}}}dv.
\end{aligned}
\end{equation}\end{rem}

Just like the other weighted Laplacian, for example, $\mathcal{L}$ operator and Witten-Laplacian, $\mathcal{L}_{\nu}$ operator is also very important in geometric analysis. However, the eigenvalues of such an operator are rarely studied as far as we know. Therefore, it is very
urgent for us to exploit the eigenvalue problem of $\mathcal{L}_{\nu}$ operator. Next, let us consider an eigenvalue problem of $\mathfrak{L}_{\nu}^{2}$ operator on the bounded domain $\Omega \subset \mathcal{M}^{n}$ with Dirichlet boundary condition:

\begin{equation}\label{L-2-prob} {\begin{cases} \
 \mathfrak{L}_{\nu}^{2}u  =\Lambda u , \ \ & {\rm in} \ \ \ \ \Omega, \\
  \ u=\frac{\partial u}{\partial \textbf{n}}=0, \ \ & {\rm on} \ \ \partial \Omega,
\end{cases}}\end{equation}where $\textbf{n}$ denotes the normal vector to the boundary $\partial\Omega$.
The main goal of this paper is to establish some eigenvalue inequalities with lower order for  clamped plate problem \eqref{L-2-prob}
of $\mathfrak{L}_{\nu}^{2}$ operator on $\mathcal{M}^{n}$. However, for the eigenvalues with higher order of $\mathcal{L}_{\nu}$ operator and $\mathcal{L}^{2}_{\nu}$ operator, we also obtain
some eigenvalue inequalities in some separated papers, elsewhere.
Now, let us state the main result as follows.

\begin{thm}
\label{thm1.1}
Let $\mathcal{M}^{n}$ be an $n$-dimensional complete Riemannian
manifold isometrically embedded into the Euclidean space $\mathbb{R}^{n+p}$ with mean curvature $H$. Then,  for any $j$, where $j=1,2,\cdots$, eigenvalues of clamped plate problem \eqref{L-2-prob}
  satisfy
\begin{equation}\begin{aligned}\label{z-ineq-1}
&\sum_{i=1}^{n}\left(\Lambda_{i+1}-\Lambda_{1}\right)^{\frac{1}{2}}\\&\leq4\left\{\left( \Lambda_{1}^{\frac{1}{2}}+4\widetilde{C}_{1}\Lambda_{1}^{\frac{1}{4}}+4\widetilde{C}_{1}^{2}+C_{1}\right)\left[\left(\frac{n}{2}+1\right) \Lambda_{1}^{\frac{1}{2}}+4\widetilde{C}_{1}\Lambda_{1}^{\frac{1}{4}}+4\widetilde{C}_{1}^{2}+C_{1}\right]\right\}^{\frac{1}{2}},
\end{aligned}\end{equation}
where $C_{1}$ is given by

$$
C_{1}=\frac{1}{4}\inf _{\sigma \in \Pi}\max_{\Omega}\left(n^{2}H^{2}\right),
$$and $\widetilde{C}_{1}$ is given by $$\widetilde{C}_{1}=\frac{1}{4}\max_{\Omega} |\nu^{\top}|_{g_{0}} .$$
\end{thm}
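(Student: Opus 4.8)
The plan is to run the standard Payne–Pólya–Weinberger / Cheng–Yang commutator machinery, adapted to the weighted setting of the operator $\mathfrak{L}_{\nu}$. Let $u_{1}$ be the first eigenfunction of the clamped plate problem \eqref{L-2-prob}, normalized so that $\int_{\Omega}u_{1}^{2}\,e^{\langle\nu,X\rangle_{g_{0}}}dv=1$, and let $x_{\alpha}$ ($\alpha=1,\dots,n+p$) be the coordinate functions of the embedding $X$ into $\mathbb{R}^{n+p}$. The trial functions will be $\varphi_{\alpha}=x_{\alpha}u_{1}-\sum_{\beta}a_{\alpha\beta}u_{\beta}$, where the constants $a_{\alpha\beta}$ are chosen (possibly after an orthogonal change of the ambient coordinates, as in Chen–Cheng \cite{CC}) to make $\varphi_{\alpha}$ orthogonal to $u_{1},\dots,u_{n}$ with respect to the weighted measure; I would relabel so that $\varphi_{\alpha}$ is a valid test function for $\Lambda_{\alpha+1}$. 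The Rayleigh quotient for \eqref{L-2-prob} then gives, for each $\alpha$,
\begin{equation*}
(\Lambda_{\alpha+1}-\Lambda_{1})\int_{\Omega}\varphi_{\alpha}^{2}\,e^{\langle\nu,X\rangle_{g_{0}}}dv
\leq \int_{\Omega}\bigl(\mathfrak{L}_{\nu}\varphi_{\alpha}\bigr)^{2}e^{\langle\nu,X\rangle_{g_{0}}}dv
-\int_{\Omega}\langle\nabla\varphi_{\alpha},\nabla(\mathfrak{L}_{\nu}\varphi_{\alpha})\rangle_{g}\,e^{\langle\nu,X\rangle_{g_{0}}}dv,
\end{equation*}
or some such form obtained from \eqref{1.3}; the precise arrangement is the one used in Cheng–Huang–Wei \cite{CHW} for the biharmonic case.

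The next step is the commutator computation. One needs $\mathfrak{L}_{\nu}(x_{\alpha}u_{1})=x_{\alpha}\mathfrak{L}_{\nu}u_{1}+2\langle\nabla x_{\alpha},\nabla u_{1}\rangle_{g}+u_{1}\mathfrak{L}_{\nu}x_{\alpha}$, where $\mathfrak{L}_{\nu}x_{\alpha}=\Delta x_{\alpha}+\langle\nu,\nabla x_{\alpha}\rangle_{g_{0}}$; the term $\Delta x_{\alpha}$ is controlled by the mean curvature via $\sum_{\alpha}(\Delta x_{\alpha})^{2}=n^{2}H^{2}$ (Beltrami), and the term $\langle\nu,\nabla x_{\alpha}\rangle_{g_{0}}$ contributes exactly the tangential projection $\nu^{\top}$, since $\sum_{\alpha}\langle\nu,\nabla x_{\alpha}\rangle_{g_{0}}^{2}=|\nu^{\top}|_{g_{0}}^{2}$. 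These are the sources of the two constants $C_{1}=\frac14\inf_{\sigma}\max_{\Omega}(n^{2}H^{2})$ and $\widetilde C_{1}=\frac14\max_{\Omega}|\nu^{\top}|_{g_{0}}$. Summing the Rayleigh inequalities over $\alpha=1,\dots,n$, using $\sum_{\alpha}|\nabla x_{\alpha}|^{2}_{g}=n$ and $\sum_{\alpha}x_{\alpha}\nabla x_{\alpha}$-type identities, and applying Cauchy–Schwarz to pass from $\sum_{\alpha}(\Lambda_{\alpha+1}-\Lambda_{1})$-weighted quantities to $\sum_{\alpha}(\Lambda_{\alpha+1}-\Lambda_{1})^{1/2}$, should produce the quadratic-in-$\Lambda_{1}^{1/4}$ expressions $\Lambda_{1}^{1/2}+4\widetilde C_{1}\Lambda_{1}^{1/4}+4\widetilde C_{1}^{2}+C_{1}$ appearing in \eqref{z-ineq-1}. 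The completion of the square $\Lambda_{1}^{1/2}+4\widetilde C_{1}\Lambda_{1}^{1/4}+4\widetilde C_{1}^{2}=(\Lambda_{1}^{1/4}+2\widetilde C_{1})^{2}+(2\widetilde C_{1})^{2}$ is what signals that the cross terms from $\Delta x_{\alpha}$ and $\langle\nu,\nabla x_{\alpha}\rangle_{g_{0}}$ have been grouped correctly.

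I expect the main obstacle to be the bookkeeping in the commutator term $-\int\langle\nabla\varphi_{\alpha},\nabla(\mathfrak{L}_{\nu}\varphi_{\alpha})\rangle_{g}\,e^{\langle\nu,X\rangle_{g_{0}}}dv$: unlike the pure biharmonic case, $\mathfrak{L}_{\nu}$ has a first-order drift term, so $\nabla(\mathfrak{L}_{\nu}\varphi_{\alpha})$ carries extra pieces involving $\nabla\langle\nu,\nabla\varphi_{\alpha}\rangle_{g_{0}}$ and the second fundamental form of the embedding, and one must show that all of these can be absorbed into the stated constants rather than generating new curvature terms. The key technical device will be to integrate by parts against the weighted measure repeatedly (using \eqref{1.3}) to move derivatives off $u_{1}$, then bound everything pointwise by $n^{2}H^{2}$ and $|\nu^{\top}|_{g_{0}}^{2}$, and finally take $\inf$ over $\sigma\in\Pi$ — legitimate because $\Lambda_{i}$ are intrinsic while the right-hand side is computed for a fixed but arbitrary isometric immersion. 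Once the summed inequality has the form $\sum_{\alpha}(\Lambda_{\alpha+1}-\Lambda_{1})^{1/2}\le 4\{(\cdots)[(\tfrac n2+1)(\cdots)]\}^{1/2}$, the proof is finished; the step I would be most careful about is verifying that no factor is lost or doubled when the orthogonalization constants $a_{\alpha\beta}$ are eliminated via the antisymmetry trick.
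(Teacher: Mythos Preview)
Your overall strategy coincides with the paper's: coordinate functions as trial functions, Gram--Schmidt/QR orthogonalization of the matrix $\bigl(\int_\Omega x_\alpha u_1 u_{\beta+1}\,e^{\langle\nu,X\rangle_{g_0}}dv\bigr)$, the identities $\sum_\alpha|\nabla x_\alpha|^2=n$, $\sum_\alpha(\Delta x_\alpha)^2=n^2H^2$, $\sum_\alpha\langle\nu,\nabla x_\alpha\rangle_{g_0}^2=|\nu^\top|_{g_0}^2$, and the Rayleigh--Ritz inequality applied to $\psi_\alpha=(h_\alpha-a_\alpha)u_1$. Two specific points deserve correction or sharpening.

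First, your anticipated ``main obstacle'' is not one. The vector $\nu$ is \emph{constant} in $\mathbb{R}^{n+p}$, so $\mathfrak{L}_\nu x_\alpha=\Delta x_\alpha+\langle\nu,\nabla x_\alpha\rangle_{g_0}$ is a scalar function and no derivatives of $\nu$ or second-fundamental-form terms arise beyond those already encoded in $\Delta x_\alpha$. The commutator calculus closes cleanly: after integrating by parts one finds
\[
\int_\Omega \psi_\alpha\epsilon_\alpha\,e^{\langle\nu,X\rangle_{g_0}}dv
=\int_\Omega\bigl(u_1\mathfrak{L}_\nu h_\alpha+2\langle\nabla h_\alpha,\nabla u_1\rangle_g\bigr)^2 e^{\langle\nu,X\rangle_{g_0}}dv
-2\int_\Omega|\nabla h_\alpha|_g^2\,u_1\mathfrak{L}_\nu u_1\,e^{\langle\nu,X\rangle_{g_0}}dv,
\]
with no extra curvature pieces.

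Second, two ingredients you have not named are what actually produce the stated form of \eqref{z-ineq-1}. (i) The square-root exponent on the eigenvalue gaps comes not from Cauchy--Schwarz applied after summation, but from a Young-inequality step with a free parameter $\delta>0$: one writes $(\Lambda_{\alpha+1}-\Lambda_1)^{1/2}\int|u_1\nabla h_\alpha|^2=-(\Lambda_{\alpha+1}-\Lambda_1)^{1/2}\int\psi_\alpha(\cdots)$, splits via $ab\le\tfrac{\delta}{2}a^2+\tfrac{1}{2\delta}b^2$, bounds $(\Lambda_{\alpha+1}-\Lambda_1)\int\psi_\alpha^2$ by the Rayleigh inequality, sums, and then optimizes in $\delta$. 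It is this optimization that yields the geometric-mean product $4\{A\cdot[(\tfrac{n}{2}+1)\Lambda_1^{1/2}+\cdots]\}^{1/2}$ rather than a sum. (ii) The passage from the $(n+p)$-term sum $\sum_{\alpha=1}^{n+p}(\Lambda_{\alpha+1}-\Lambda_1)^{1/2}\int|u_1\nabla h_\alpha|^2$ down to $\sum_{i=1}^{n}(\Lambda_{i+1}-\Lambda_1)^{1/2}$ uses the pointwise bound $|\nabla h_\alpha|_g^2\le 1$ together with $\sum_{\alpha=1}^{n+p}|\nabla h_\alpha|_g^2=n$ in a Chebyshev-type rearrangement; this step is not automatic and should be written out.
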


\begin{rem}In theorem {\rm \ref{thm1.1}}, if $|\nu|_{g_{0}}=0$, then one can deduce \eqref{chw-ineq} from \eqref{z-ineq-1}.\end{rem}

\begin{corr}\label{corr1.1}Under the assumption of theorem {\rm \ref{thm1.1}}, we have
\begin{equation*}
\sum_{i=1}^{n}\left\{\left(\Lambda_{i+1}-\Lambda_{1}\right)^{\frac{1}{2}}-\Lambda_{1}^{\frac{1}{2}}\right\} \leq4\left(\Lambda_{1}^{\frac{1}{2}}+4\widetilde{C}_{1}\Lambda_{1}^{\frac{1}{4}}+4\widetilde{C}_{1}^{2}+C_{1}\right),
\end{equation*}where $C_{1}$ is given by

$$
C_{1}=\frac{1}{4}\inf _{\sigma \in \Pi}\max_{\Omega}\left(n^{2}H^{2}\right),
$$and $\widetilde{C}_{1}$ is given by $$\widetilde{C}_{1}=\frac{1}{4}\max_{\Omega} |\nu^{\top}|_{g_{0}}.$$
 \end{corr}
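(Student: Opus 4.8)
The plan is to deduce Corollary \ref{corr1.1} directly from Theorem \ref{thm1.1} by purely elementary algebra, with no additional analytic input. First, abbreviate
\[
A := \Lambda_{1}^{\frac{1}{2}}+4\widetilde{C}_{1}\Lambda_{1}^{\frac{1}{4}}+4\widetilde{C}_{1}^{2}+C_{1},
\qquad
B := \left(\tfrac{n}{2}+1\right)\Lambda_{1}^{\frac{1}{2}}+4\widetilde{C}_{1}\Lambda_{1}^{\frac{1}{4}}+4\widetilde{C}_{1}^{2}+C_{1},
\]
so that the right-hand side of \eqref{z-ineq-1} is exactly $4\sqrt{AB}$, and note the structural identity $B=A+\tfrac{n}{2}\Lambda_{1}^{\frac{1}{2}}$. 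Since $\Lambda_{1}\geq 0$, $\widetilde{C}_{1}\geq 0$ and $C_{1}\geq 0$, we have $A\geq 0$ and $B\geq A$.

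Next I would invoke the elementary estimate that for all real numbers $a\geq 0$ and $t\geq 0$,
\[
\sqrt{a(a+t)}\ \leq\ a+\tfrac{t}{2},
\]
which follows at once by squaring, because $\bigl(a+\tfrac{t}{2}\bigr)^{2}-a(a+t)=\tfrac{t^{2}}{4}\geq 0$. Applying this with $a=A$ and $t=\tfrac{n}{2}\Lambda_{1}^{\frac{1}{2}}$ gives
\[
4\sqrt{AB}\ =\ 4\sqrt{A\Bigl(A+\tfrac{n}{2}\Lambda_{1}^{\frac{1}{2}}\Bigr)}\ \leq\ 4\Bigl(A+\tfrac{n}{4}\Lambda_{1}^{\frac{1}{2}}\Bigr)\ =\ 4A+n\Lambda_{1}^{\frac{1}{2}}.
\]

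Finally, combining this bound with \eqref{z-ineq-1} yields $\sum_{i=1}^{n}\left(\Lambda_{i+1}-\Lambda_{1}\right)^{\frac{1}{2}}\leq 4A+n\Lambda_{1}^{\frac{1}{2}}$, and subtracting $n\Lambda_{1}^{\frac{1}{2}}=\sum_{i=1}^{n}\Lambda_{1}^{\frac{1}{2}}$ from both sides produces precisely the asserted inequality with right-hand side $4A$. There is essentially no obstacle in this argument; the only point requiring care is confirming that the two bracketed quantities appearing in \eqref{z-ineq-1} differ exactly by $\tfrac{n}{2}\Lambda_{1}^{\frac{1}{2}}$, which is what permits the square-root bound to be linearized. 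This is the exact analogue of the classical passage from the Cheng--Huang--Wei inequality \eqref{chw-ineq} to its additive form.
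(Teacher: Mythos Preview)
Your proof is correct and is essentially identical to the paper's own argument: the paper also derives the corollary directly from \eqref{z-ineq-1} by the single inequality $4\sqrt{AB}\leq n\Lambda_{1}^{1/2}+4A$, which is exactly your linearization $\sqrt{a(a+t)}\leq a+t/2$ applied with $a=A$ and $t=\tfrac{n}{2}\Lambda_{1}^{1/2}$. The only difference is presentational; your introduction of the abbreviations $A$ and $B$ and the explicit identification $B=A+\tfrac{n}{2}\Lambda_{1}^{1/2}$ make the step slightly more transparent than the paper's one-line assertion.
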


\begin{corr}
\label{corr1.2}
Under the assumption of theorem {\rm \ref{thm1.1}}, we have
\begin{equation}\begin{aligned}\label{z-ineq-1-1}
\sum_{i=1}^{n}\left(\Lambda_{i+1}-\Lambda_{1}\right)^{\frac{1}{2}}\leq6\left\{\left( \Lambda_{1}^{\frac{1}{2}}+C_{2}\right)\left[\left(\frac{n}{3}+1\right) \Lambda_{1}^{\frac{1}{2}}+C_{2}\right]\right\}^{\frac{1}{2}},
\end{aligned}\end{equation}
where $C_{2}$ is given by

$$
C_{2}=\frac{1}{6}\inf _{\sigma \in \Pi}\max_{\Omega}\left(n^{2}H^{2}+3|\nu^{\top}|_{g_{0}}^{2}\right).
$$
\end{corr}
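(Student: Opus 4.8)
The plan is to re-run the argument that proves Theorem \ref{thm1.1}, changing only one balancing constant; this is why \eqref{z-ineq-1-1} is phrased as a corollary. Let $u_{1}$ be a first eigenfunction of \eqref{L-2-prob}, normalized by $\int_{\Omega}u_{1}^{2}e^{\langle\nu,X\rangle_{g_{0}}}dv=1$; write $x_{1},\ldots,x_{n+p}$ for the components of the isometric embedding $X$ and $\{E_{1},\ldots,E_{n+p}\}$ for the standard basis of $\mathbb{R}^{n+p}$, so that $\nabla x_{\alpha}=E_{\alpha}^{\top}$ and $\mathfrak{L}_{\nu}x_{\alpha}=\Delta x_{\alpha}+\langle\nu^{\top},E_{\alpha}\rangle_{g_{0}}$. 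Exactly as in the proof of Theorem \ref{thm1.1} one forms the trial functions $\varphi_{\alpha}=x_{\alpha}u_{1}-\sum_{k}a_{\alpha k}u_{k}$, where $\{u_{k}\}$ is an $L^{2}(e^{\langle\nu,X\rangle_{g_{0}}}dv)$-orthonormal system of eigenfunctions of \eqref{L-2-prob} and the coefficients $a_{\alpha k}$ are chosen, via the usual orthogonalization device, so that each $\varphi_{\alpha}$ is $e^{\langle\nu,X\rangle_{g_{0}}}dv$-orthogonal to $u_{1}$ and the variational characterization of eigenvalues applied to $\varphi_{\alpha}$ returns an upper bound for $\Lambda_{\alpha+1}$.

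Substituting these trial functions into the general eigenvalue formula of the paper (the one already used in the proof of Theorem \ref{thm1.1}) and expanding $\mathfrak{L}_{\nu}(x_{\alpha}u_{1})=x_{\alpha}\mathfrak{L}_{\nu}u_{1}+2\langle\nabla x_{\alpha},\nabla u_{1}\rangle_{g}+u_{1}\mathfrak{L}_{\nu}x_{\alpha}$, one integrates by parts against the weight $e^{\langle\nu,X\rangle_{g_{0}}}$, sums over $\alpha=1,\ldots,n+p$, and uses the standard identities $\sum_{\alpha}|\nabla x_{\alpha}|_{g}^{2}=n$, $\sum_{\alpha}(\Delta x_{\alpha})^{2}=n^{2}H^{2}$ and $\sum_{\alpha}\langle\nu^{\top},E_{\alpha}\rangle_{g_{0}}^{2}=|\nu^{\top}|_{g_{0}}^{2}$, together with the orthogonality $\langle\textbf{H},\nu^{\top}\rangle_{g_{0}}=0$ of the mean curvature vector to the tangent bundle. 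This yields an inequality of the same shape as in the proof of Theorem \ref{thm1.1}, but with the curvature quantity $n^{2}H^{2}$ and the drift quantity $|\nu^{\top}|_{g_{0}}^{2}$ still coupled. The key --- and only --- departure from that proof is that, instead of detaching the drift contribution and estimating it separately against $\Lambda_{1}^{1/2}=\left(\int_{\Omega}(\mathfrak{L}_{\nu}u_{1})^{2}e^{\langle\nu,X\rangle_{g_{0}}}dv\right)^{1/2}$, which is exactly what generates the mixed terms $4\widetilde{C}_{1}\Lambda_{1}^{1/4}$ and the constant $\widetilde{C}_{1}$ in \eqref{z-ineq-1}, one keeps the combination $n^{2}H^{2}+3|\nu^{\top}|_{g_{0}}^{2}$ intact and applies Young's inequality to the remaining cross term with balancing constant $\frac{1}{3}$ in place of $\frac{1}{2}$. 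This single choice turns the outer factor $4$ into $6$, the coefficient $\frac{n}{2}+1$ into $\frac{n}{3}+1$, and the normalization $\frac{1}{4}$ into $\frac{1}{6}$. Finally one applies Cauchy--Schwarz to pass from the sums $\sum_{\alpha}(\Lambda_{\alpha+1}-\Lambda_{1})\int_{\Omega}\varphi_{\alpha}^{2}e^{\langle\nu,X\rangle_{g_{0}}}dv$ to $\sum_{\alpha=1}^{n}(\Lambda_{\alpha+1}-\Lambda_{1})^{1/2}$, bounds $n^{2}H^{2}+3|\nu^{\top}|_{g_{0}}^{2}$ pointwise by $\max_{\Omega}(n^{2}H^{2}+3|\nu^{\top}|_{g_{0}}^{2})$, and, just as in the reduction that defines $C_{1}$, takes the infimum over $\sigma\in\Pi$; this produces \eqref{z-ineq-1-1} with $C_{2}=\frac{1}{6}\inf_{\sigma\in\Pi}\max_{\Omega}(n^{2}H^{2}+3|\nu^{\top}|_{g_{0}}^{2})$.

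I expect the only genuine difficulty to be bookkeeping: tracking exactly where the coefficient $3$ of $|\nu^{\top}|_{g_{0}}^{2}$ comes from, since it is assembled from several $|\nu^{\top}|_{g_{0}}^{2}$-contributions in the expansion of $\mathfrak{L}_{\nu}^{2}(x_{\alpha}u_{1})$ --- those coming from the $u_{1}\langle\nu^{\top},E_{\alpha}\rangle_{g_{0}}$ and $x_{\alpha}\langle\nu^{\top},\nabla u_{1}\rangle_{g_{0}}$ pieces of $\mathfrak{L}_{\nu}(x_{\alpha}u_{1})$ and from the cross term handled by Young's inequality --- and in checking that the constant $\frac{1}{3}$ is simultaneously the one producing $\frac{n}{3}+1$ and the one producing the coefficient $3$ inside $C_{2}$; any other admissible split gives a correct inequality of the same form but with weaker constants. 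Everything else (the integrations by parts, the orthogonalization lemma, and the concluding Cauchy--Schwarz) is carried out precisely as for Theorem \ref{thm1.1}.
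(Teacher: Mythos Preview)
Your plan is correct and matches the paper's own argument: one re-runs the proof of Theorem~\ref{thm1.1}, replacing the Cauchy--Schwarz treatment of the cross term $4\int_{\Omega}u_{1}|\nu^{\top}|_{g_{0}}|\nabla u_{1}|_{g}\,e^{\langle\nu,X\rangle_{g_{0}}}dv$ by a pointwise mean inequality, which gives \eqref{Upsilon-31-1} and hence $\widehat{\Upsilon}\leq 6(\Lambda_{1}^{1/2}+C_{2})$; the rest is unchanged.

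Two bookkeeping corrections to your description of the mechanism, since you flagged this as the likely difficulty. First, there is no ``balancing constant $\tfrac{1}{3}$ in place of $\tfrac{1}{2}$'': the paper uses the \emph{equal-weight} AM--GM $4ab\leq 2a^{2}+2b^{2}$ with $a=u_{1}|\nu^{\top}|_{g_{0}}$ and $b=|\nabla u_{1}|_{g}$, which shifts the coefficient of $|\nabla u_{1}|_{g}^{2}$ from $4$ to $6$ and that of $u_{1}^{2}|\nu^{\top}|_{g_{0}}^{2}$ from $1$ to $3$. The factor $\tfrac{n}{3}+1$ is \emph{not} produced by this step; it comes from the same $\delta$-optimization already present in Theorem~\ref{thm1.1}, now applied to $(\tfrac{\delta}{2}+\tfrac{1}{2\delta})(6\Lambda_{1}^{1/2}+6C_{2})+n\delta\Lambda_{1}^{1/2}$. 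Second, the passage from the sum over $\alpha=1,\ldots,n+p$ to $\sum_{i=1}^{n}(\Lambda_{i+1}-\Lambda_{1})^{1/2}$ is not a Cauchy--Schwarz step but the combinatorial estimate \eqref{Sum-3.2}, which uses $|\nabla h_{\alpha}|_{g}^{2}\leq 1$ together with $\sum_{\alpha}|\nabla h_{\alpha}|_{g}^{2}=n$ and the monotonicity of the eigenvalues; this is already in the proof of Theorem~\ref{thm1.1} and carries over verbatim.
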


\begin{corr}\label{corr1.3}Under the assumption of theorem {\rm \ref{thm1.1}}, we have
\begin{equation*}
\sum_{i=1}^{n}\left\{\left(\Lambda_{i+1}-\Lambda_{1}\right)^{\frac{1}{2}}-\Lambda_{1}^{\frac{1}{2}}\right\} \leq 6 \left(\Lambda_{1}^{\frac{1}{2}}+C_{2}\right),
\end{equation*}where $C_{2}$ is given by

$$
C_{2}=\frac{1}{6}\inf _{\sigma \in \Pi}\max_{\Omega}\left(n^{2}H^{2}+3|\nu^{\top}|_{g_{0}}^{2}\right).
$$\end{corr}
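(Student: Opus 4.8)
The plan is to obtain Corollary \ref{corr1.3} as a direct algebraic consequence of Corollary \ref{corr1.2}; no further geometric analysis is required. First I would record that, since $\Omega$ is bounded and the Dirichlet problem \eqref{L-2-prob} for the positive operator $\mathfrak{L}_{\nu}^{2}$ has $\Lambda_{1}>0$, while $C_{2}=\frac{1}{6}\inf_{\sigma\in\Pi}\max_{\Omega}\bigl(n^{2}H^{2}+3|\nu^{\top}|_{g_{0}}^{2}\bigr)\ge 0$, the two numbers
\[
a:=\Lambda_{1}^{\frac12}+C_{2},\qquad b:=\Bigl(\tfrac{n}{3}+1\Bigr)\Lambda_{1}^{\frac12}+C_{2}
\]
are strictly positive, with $b-a=\tfrac{n}{3}\Lambda_{1}^{\frac12}$. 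In this notation Corollary \ref{corr1.2} states precisely that $\sum_{i=1}^{n}(\Lambda_{i+1}-\Lambda_{1})^{\frac12}\le 6\sqrt{ab}$.

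The only real step is then to invoke the arithmetic--geometric mean inequality $2\sqrt{ab}\le a+b$, which gives
\[
6\sqrt{ab}=3\,(2\sqrt{ab})\le 3(a+b)=6\bigl(\Lambda_{1}^{\frac12}+C_{2}\bigr)+n\Lambda_{1}^{\frac12},
\]
since $3(a+b)=6\Lambda_{1}^{\frac12}+6C_{2}+n\Lambda_{1}^{\frac12}$. Substituting this bound into Corollary \ref{corr1.2} and moving $n\Lambda_{1}^{\frac12}=\sum_{i=1}^{n}\Lambda_{1}^{\frac12}$ to the left-hand side yields
\[
\sum_{i=1}^{n}\Bigl\{(\Lambda_{i+1}-\Lambda_{1})^{\frac12}-\Lambda_{1}^{\frac12}\Bigr\}\le 6\bigl(\Lambda_{1}^{\frac12}+C_{2}\bigr),
\]
which is the assertion of Corollary \ref{corr1.3}.

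I do not expect any genuine obstacle: the entire geometric content lives in Theorem \ref{thm1.1} and its specialization Corollary \ref{corr1.2}, and the reduction above is a one-line estimate. The points that deserve a sentence of justification are the strict positivity of $a$ and $b$ (needed so that $2\sqrt{ab}\le a+b$ applies, and which incidentally shows the inequality is never an equality unless $\Lambda_{1}=0$) and the fact that the constant $C_{2}$ passes unchanged from Corollary \ref{corr1.2}. Should one prefer a self-contained argument, the same conclusion follows by re-running the proof of Theorem \ref{thm1.1} with the choice of free parameters that produces the numerical constant $6$ together with $C_{2}$, and then applying the identical arithmetic--geometric mean step; but routing through Corollary \ref{corr1.2} is the most economical path.
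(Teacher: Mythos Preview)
Your proposal is correct and follows essentially the same approach as the paper: the paper states that the proof of Corollary~\ref{corr1.3} is ``the same as corollary~\ref{corr1.1}'', which amounts precisely to applying the arithmetic--geometric mean inequality to the right-hand side of Corollary~\ref{corr1.2} and moving the resulting $n\Lambda_{1}^{1/2}$ term to the left. Your write-up actually supplies the details the paper omits.
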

This paper is organized as follows. In Section \ref{sec2}, we prove a general formula for
eigenvalues of the clamped plate problem of  $\mathfrak{L}_{\nu}^{2}$ operator on the complete Riemannian manifolds. In section \ref{sec3}, we prove some results of Chen and Cheng type, which will be very useful in the proof of our main reuslts. By making use
of the general formula and some results of Chen-Cheng type, we give the proofs of theorem \ref{thm1.1}, corollary \ref{corr1.1}, corollary \ref{corr1.2} and corollary \ref{corr1.3} in section \ref{sec4}. Applying theorem \ref{thm1.1}, we obtain several universal inequalities for the eigenvalues of $\mathfrak{L}_{II}^{2}$ operator on the translating solitons with respect to the mean curvature flows in section \ref{sec5}. We note that all of eigenvalue inequalities of $\mathfrak{L}_{II}^{2}$ operator on the translating solitons are universal. Finally, as some further applications of theorem \ref{thm1.1},  we obtain some eigenvalue inequalities of $\mathfrak{L}_{\nu}^{2}$ operator on the minimal submanifolds isometrically embedded into the Euclidean spaces, submanifolds isometrically embedded (or immersed) into the unit spheres and projective spaces in section \ref{sec6}.

\begin{ack}The research was partially supported by the National Natural
Science Foundation of China (Grant Nos. 11861036 and 11826213) and  the  Natural Science Foundation of Jiangxi Province (Grant No. 20171ACB21023). The author  shall express his sincere gratitude to  the anonymous referees for
their helpful comments and suggestions.
\end{ack}

\section{Some Lemmas and their Proofs} \label{sec2}
\vskip3mm
In this section, we establish a general formula, which will play an important role in the proof of theorem \ref{thm1.1}.

Assume that $\zeta\in C^{2}_{0}(\Omega)$, and we define $\xi$ as follows:

\begin{equation}\begin{aligned}\label{xi}\xi :& =u_{1} \mathfrak{L}_{\nu}^{2}\zeta  + 2\langle \nabla u_{1}, \nabla ( \mathfrak{L}_{\nu} \zeta )\rangle_{g}  + 2 \mathfrak{L}_{\nu} \zeta  \mathfrak{L}_{\nu} u_{1} \\&\quad+ 2 \mathfrak{L}_{\nu}\langle \nabla \zeta , \nabla u_{1}\rangle_{g}
 +2\langle \nabla \zeta , \nabla ( \mathfrak{L}_{\nu} u_{1})\rangle_{g}.\end{aligned}\end{equation}
Then, we have the following lemma.

\begin{lem}\label{lem2.1}
\label{lem2.1-1}Let $(\mathcal{M}^{n},g)$ be an $n$-dimensional complete Riemannian manifold and $u_{1}$  the first eigenfunction of the eigenvalue problem
\eqref{L-2-prob}. Then, we have

\begin{equation}\label{zero-15}\int
_{\Omega}\xi u_{1}e^{\langle\nu,X\rangle_{g_{0}}}dv = 0.\end{equation}\end{lem}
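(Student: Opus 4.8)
The plan is to establish \eqref{zero-15} by a direct integration-by-parts argument, exploiting the self-adjointness of $\mathfrak{L}_{\nu}$ with respect to the weighted measure $e^{\langle\nu,X\rangle_{g_{0}}}dv$ recorded in \eqref{1.3}, together with the fact that $\zeta\in C^2_0(\Omega)$ and $u_1$ satisfies the Dirichlet boundary conditions $u_1=\partial u_1/\partial\mathbf{n}=0$ on $\partial\Omega$, so that no boundary terms survive. The key observation is that the five terms defining $\xi$ in \eqref{xi} are not arbitrary: $\xi$ is precisely $\mathfrak{L}_{\nu}^{2}(\zeta u_1) - (\mathfrak{L}_{\nu}^{2}\zeta)u_1$ rearranged via the product (Leibniz) rules for $\mathfrak{L}_{\nu}$ acting on a product. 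Indeed, writing $\mathfrak{L}_{\nu}(fh) = f\mathfrak{L}_{\nu}h + h\mathfrak{L}_{\nu}f + 2\langle\nabla f,\nabla h\rangle_g$ (which follows from the corresponding identity for $\Delta$ plus the first-order term $\langle\nu,\nabla(\cdot)\rangle_{g_0}$, which is a derivation), and iterating once more to expand $\mathfrak{L}_{\nu}^{2}(\zeta u_1)$, one finds after cancellation that $\mathfrak{L}_{\nu}^{2}(\zeta u_1) = u_1\mathfrak{L}_{\nu}^{2}\zeta + \xi$. This is the structural identity that makes the lemma work.

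The steps I would carry out are: first, record the Leibniz rule for $\mathfrak{L}_{\nu}$ on a product, noting that the drift term $\langle\nu,\nabla(\cdot)\rangle_{g_0}$ satisfies the ordinary product rule, so only the $\Delta$ part contributes the cross term $2\langle\nabla f,\nabla h\rangle_g$. Second, apply this rule twice to $\mathfrak{L}_{\nu}^{2}(\zeta u_1) = \mathfrak{L}_{\nu}\big(\mathfrak{L}_{\nu}(\zeta u_1)\big)$ and collect terms; the outcome should match $u_1\mathfrak{L}_{\nu}^{2}\zeta + \xi$ with $\xi$ exactly as in \eqref{xi} (here $\mathfrak{L}_{\nu}$ is self-adjoint but not symmetric in its two slots in the naive sense — one must be careful that the term $2\mathfrak{L}_{\nu}\langle\nabla\zeta,\nabla u_1\rangle_g$ arises from differentiating the cross term). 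Third, multiply the identity $\xi = \mathfrak{L}_{\nu}^{2}(\zeta u_1) - u_1\mathfrak{L}_{\nu}^{2}\zeta$ by $u_1 e^{\langle\nu,X\rangle_{g_0}}$ and integrate over $\Omega$:
\begin{equation*}
\int_\Omega \xi u_1 e^{\langle\nu,X\rangle_{g_0}}dv = \int_\Omega u_1 \mathfrak{L}_{\nu}^{2}(\zeta u_1) e^{\langle\nu,X\rangle_{g_0}}dv - \int_\Omega u_1^{2}\,\mathfrak{L}_{\nu}^{2}\zeta\, e^{\langle\nu,X\rangle_{g_0}}dv.
\end{equation*}
Fourth, transfer the operators using \eqref{1.3} applied twice (i.e. $\mathfrak{L}_{\nu}^{2}$ is self-adjoint for the weighted measure, since a composition of self-adjoint operators whose intermediate boundary terms vanish is self-adjoint on the appropriate function space): the first integral becomes $\int_\Omega (\mathfrak{L}_{\nu}^{2}u_1)(\zeta u_1)e^{\langle\nu,X\rangle_{g_0}}dv = \Lambda_1\int_\Omega \zeta u_1^{2}e^{\langle\nu,X\rangle_{g_0}}dv$ using that $u_1$ is the first eigenfunction, $\mathfrak{L}_{\nu}^{2}u_1 = \Lambda_1 u_1$; and the second integral becomes $\int_\Omega (\mathfrak{L}_{\nu}^{2}u_1^{2})\,\zeta\, e^{\langle\nu,X\rangle_{g_0}}dv$... wait, more carefully, moving $\mathfrak{L}_{\nu}^{2}$ off $\zeta$ onto $u_1^{2}$. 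Then both contributions should reduce to the same quantity and cancel, giving zero.

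The main obstacle — and the step requiring genuine care rather than routine bookkeeping — is justifying the repeated use of self-adjointness \eqref{1.3} at the level of $\mathfrak{L}_{\nu}^{2}$: one must check that all boundary integrals generated by the two successive integrations by parts actually vanish, which uses $\zeta\in C^2_0(\Omega)$ for one pairing and the clamped conditions $u_1 = \partial u_1/\partial\mathbf{n} = 0$ for the other, and one must confirm that $\zeta u_1$ and $u_1^{2}$ lie in the right regularity class ($C^2_0$ or its closure) for \eqref{1.3} to apply with vanishing boundary terms. A secondary point of care is the algebraic verification that the specific combination in \eqref{xi} — in particular the placement of $\mathfrak{L}_{\nu}$ versus $\langle\nabla\cdot,\nabla\cdot\rangle_g$ in the fourth and fifth terms — is exactly what the double Leibniz expansion produces; a sign error or a misplaced factor of $2$ there would break the cancellation. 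Once the structural identity $\xi = \mathfrak{L}_{\nu}^{2}(\zeta u_1) - u_1\mathfrak{L}_{\nu}^{2}\zeta$ is in hand and the boundary terms are dispatched, the vanishing of $\int_\Omega \xi u_1 e^{\langle\nu,X\rangle_{g_0}}dv$ is immediate.
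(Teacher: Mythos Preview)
Your strategy---recognize $\xi$ as a commutator-type expression and then use self-adjointness of $\mathfrak{L}_{\nu}^{2}$---is sound and in fact more conceptual than the paper's approach, but the structural identity you state is wrong. Expanding $\mathfrak{L}_{\nu}^{2}(\zeta u_{1})$ via the Leibniz rule twice gives
\[
\mathfrak{L}_{\nu}^{2}(\zeta u_{1}) \;=\; \zeta\,\mathfrak{L}_{\nu}^{2}u_{1} \;+\; u_{1}\,\mathfrak{L}_{\nu}^{2}\zeta \;+\; 2(\mathfrak{L}_{\nu}\zeta)(\mathfrak{L}_{\nu}u_{1}) \;+\; 2\langle\nabla u_{1},\nabla(\mathfrak{L}_{\nu}\zeta)\rangle_{g} \;+\; 2\langle\nabla\zeta,\nabla(\mathfrak{L}_{\nu}u_{1})\rangle_{g} \;+\; 2\mathfrak{L}_{\nu}\langle\nabla\zeta,\nabla u_{1}\rangle_{g},
\]
so comparing with \eqref{xi} yields $\xi = \mathfrak{L}_{\nu}^{2}(\zeta u_{1}) - \zeta\,\mathfrak{L}_{\nu}^{2}u_{1}$, \emph{not} $\mathfrak{L}_{\nu}^{2}(\zeta u_{1}) - u_{1}\,\mathfrak{L}_{\nu}^{2}\zeta$ as you wrote. (Note that $u_{1}\mathfrak{L}_{\nu}^{2}\zeta$ is already the first term of $\xi$, so subtracting it cannot leave $\xi$.) With your version, the second integral in your fourth step becomes $\int_{\Omega}\zeta\,\mathfrak{L}_{\nu}^{2}(u_{1}^{2})\,e^{\langle\nu,X\rangle_{g_{0}}}dv$ after moving the operator, and this is not $\Lambda_{1}\int_{\Omega}\zeta u_{1}^{2}\,e^{\langle\nu,X\rangle_{g_{0}}}dv$; the cancellation fails.

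With the corrected identity the argument is even shorter than you suggest: multiplying by $u_{1}$ and integrating,
\[
\int_{\Omega}\xi u_{1}\,e^{\langle\nu,X\rangle_{g_{0}}}dv \;=\; \int_{\Omega}u_{1}\,\mathfrak{L}_{\nu}^{2}(\zeta u_{1})\,e^{\langle\nu,X\rangle_{g_{0}}}dv \;-\; \int_{\Omega}(\zeta u_{1})\,\mathfrak{L}_{\nu}^{2}u_{1}\,e^{\langle\nu,X\rangle_{g_{0}}}dv \;=\; 0
\]
directly by the weighted self-adjointness of $\mathfrak{L}_{\nu}^{2}$ (both $u_{1}$ and $\zeta u_{1}$ satisfy the clamped boundary conditions), without ever invoking the eigenvalue equation. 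By contrast, the paper proves the lemma by splitting $\int_{\Omega}\xi u_{1}\,e^{\langle\nu,X\rangle_{g_{0}}}dv$ into its five pieces and showing, via two separate applications of Stokes' theorem, that the first two terms cancel each other and the last three terms cancel each other. Your route (once repaired) explains \emph{why} that cancellation happens, whereas the paper's route verifies it by hand.
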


\begin{proof}By the definition of $\xi$ given by \eqref{xi}, we know that

\begin{equation}\begin{aligned}\label{le-eq1}\int
_{\Omega}\xi u_{1}e^{\langle\nu,X\rangle_{g_{0}}}dv &=\int
_{\Omega}u_{1}^{2} \mathfrak{L}_{\nu}^{2}\zeta e^{\langle\nu,X\rangle_{g_{0}}}dv+2\int
_{\Omega}u_{1}\langle \nabla u_{1}, \nabla ( \mathfrak{L}_{\nu} \zeta )\rangle_{g} e^{\langle\nu,X\rangle_{g_{0}}}dv\\&\quad+2\int
_{\Omega}u_{1} \mathfrak{L}_{\nu} \zeta  \mathfrak{L}_{\nu} u_{1}e^{\langle\nu,X\rangle_{g_{0}}}dv+2\int
_{\Omega}u_{1} \mathfrak{L}_{\nu}\langle \nabla \zeta , \nabla u_{1}\rangle_{g} e^{\langle\nu,X\rangle_{g_{0}}}dv\\&\quad+2\int
_{\Omega}u_{1}\langle \nabla \zeta , \nabla ( \mathfrak{L}_{\nu} u_{1})\rangle_{g} e^{\langle\nu,X\rangle_{g_{0}}}dv.
\end{aligned}\end{equation}
By Stokes's theorem, we have

\begin{equation}\begin{aligned}\label{le-eq2}
&\int
_{\Omega}u_{1}^{2} \mathfrak{L}_{\nu}^{2}\zeta e^{\langle\nu,X\rangle_{g_{0}}}dv+2\int
_{\Omega}u_{1}\langle \nabla u_{1}, \nabla ( \mathfrak{L}_{\nu} \zeta )\rangle_{g} e^{\langle\nu,X\rangle_{g_{0}}}dv
\\&=\int
_{\Omega}u_{1}^{2} \mathfrak{L}_{\nu}^{2}\zeta e^{\langle\nu,X\rangle_{g_{0}}}dv+\int
_{\Omega}\langle \nabla u_{1}^{2}, \nabla ( \mathfrak{L}_{\nu} \zeta )\rangle_{g} e^{\langle\nu,X\rangle_{g_{0}}}dv\\&=\int
_{\Omega}u_{1}^{2} \mathfrak{L}_{\nu}^{2}\zeta e^{\langle\nu,X\rangle_{g_{0}}}dv-\int
_{\Omega}u_{1}^{2} \mathfrak{L}_{\nu}^{2}\zeta e^{\langle\nu,X\rangle_{g_{0}}}dv\\&=0.
\end{aligned}\end{equation}
Applying Stokes's theorem again, we infer that

\begin{equation}\begin{aligned}\label{le-eq3}
0&=\int_{\Omega}{\rm div}\left(u_{1} \mathfrak{L}_{\nu} u_{1}e^{\langle\nu,X\rangle_{g_{0}}}\nabla \zeta \right)dv\\
&=\int_{\Omega}\langle\nabla u_{1},\nabla \zeta \rangle_{g} \mathfrak{L}_{\nu} u_{1}e^{\langle\nu,X\rangle_{g_{0}}}dv+
\int_{\Omega}u_{1}\langle\nabla( \mathfrak{L}_{\nu} u_{1}),\nabla \zeta \rangle_{g}
 e^{\langle\nu,X\rangle_{g_{0}}}dv\\&\quad+
\int_{\Omega} u_{1} \mathfrak{L}_{\nu} u_{1}\langle\nu,\nabla \zeta \rangle_{g_{0}} e^{\langle\nu,X\rangle_{g_{0}}}dv+\int_{\Omega}u_{1}\mathfrak{L}_{\nu}
 u_{1}\Delta\zeta e^{\langle\nu,X\rangle_{g_{0}}
 }dv
\\&=\int_{\Omega}\langle\nabla u_{1},\nabla \zeta \rangle_{g} \mathfrak{L}_{\nu} u_{1}e^{\langle\nu,X\rangle_{g_{0}}}dv+
\int_{\Omega}u_{1}\langle\nabla( \mathfrak{L}_{\nu} u_{1}),\nabla \zeta \rangle_{g} e^{\langle\nu,X\rangle_{g_{0}}}dv\\&\quad\quad+\int_{\Omega}u_{1} \mathfrak{L}_{\nu} u_{1} \mathfrak{L}_{\nu}\zeta e^{\langle\nu,X\rangle_{g_{0}}}dv
.
\end{aligned}\end{equation}
From \eqref{le-eq1}, \eqref{le-eq2} and \eqref{le-eq3}, we derive \eqref{zero-15}.
 Hence, we finish the proof of this lemma.

\end{proof}
\begin{lem} \label{lemma2.2}\textbf{\emph{(General formula)}} Let $\Omega$ be a bounded domain on an $n$-dimensional complete Riemannian submanifold $(\mathcal{M}^{n},g)$ isometrically immersed  into the Euclidean space
$\mathbb{R}^{n+p}$, and $\Lambda_{i}$ be the $i^{\text {th}}$ eigenvalue of the eigenvalue problem \eqref{L-2-prob} and $u_{i}$ be the orthonormal eigenfunction corresponding to $\Lambda_{i},$ that is,
$$
\left\{\begin{array}{ll}
\mathfrak{L}_{\nu}^{2} u_{i}=\Lambda_{i} u_{i}, & \text { in } \Omega \\
u=\frac{\partial u}{\partial \textbf{n}}=0, & \text { on } \partial \Omega \\
\int_{\Omega} u_{i} u_{j} e^{\langle\nu,X\rangle_{g_{0}}}dv=\delta_{i j}, & \forall i, j=1,2, \ldots
\end{array}\right.
$$
where $\textbf{n}$ is an outward normal vector field of $\partial \Omega$.
If $\phi_{i}(i \geq 2) \in C^{4}(\Omega) \cap C^{3}(\partial \Omega)$ satisfies $\int_{\Omega} \phi_{i} u_{1} u_{j+1}e^{\langle\nu,X\rangle_{g_{0}}}dv=0$ for $1 \leq j<i,$ then for any positive integer $i,$ we have

\begin{equation}\begin{aligned}\label{lem-2.2}
\left(\Lambda_{i+1}-\Lambda_{1}\right)^{\frac{1}{2}} \int_{\Omega}\left|u_{1} \nabla \phi_{i}\right|^{2}_{g} e^{\langle\nu,X\rangle_{g_{0}}}
dv&\leq\left(\frac{\delta}{2}+\frac{1}{2 \delta}\right)\int_{\Omega}\Upsilon(\phi_{i})e^{\langle\nu,X\rangle_{g_{0}}}dv \\&\quad\quad-\delta \int_{\Omega}\Phi(\phi_{i})e^{\langle\nu,X\rangle_{g_{0}}}
dv,
\end{aligned}\end{equation}
where

\begin{equation}\label{Upsilon}\Upsilon(\phi_{i})=\left(u_{1} \mathfrak{L}_{\nu} \phi_{i}+2\langle \nabla \phi_{i},\nabla u_{1}\rangle_{g}\right)^{2},\end{equation}

\begin{equation}\label{Phi}\Phi(\phi_{i})=\left|\nabla \phi_{i}\right|^{2}_{g} u_{1} \mathfrak{L}_{\nu} u_{1},\end{equation}and
$\delta$ is any positive constant.
\end{lem}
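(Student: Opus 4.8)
The plan is to follow the by-now standard Cheng--Yang commutator-type argument, adapted to the weighted operator $\mathfrak{L}_{\nu}^{2}$ with its natural weighted measure $e^{\langle\nu,X\rangle_{g_{0}}}dv$. First I would use the trial functions $\psi_{i}=\phi_{i}u_{1}-\sum_{j}a_{ij}u_{j+1}$, where the coefficients $a_{ij}=\int_{\Omega}\phi_{i}u_{1}u_{j+1}e^{\langle\nu,X\rangle_{g_{0}}}dv$ are chosen so that $\psi_{i}\perp u_{1},u_{2},\dots,u_{i}$ in the weighted $L^{2}$ inner product; the hypothesis $\int_{\Omega}\phi_{i}u_{1}u_{j+1}e^{\langle\nu,X\rangle_{g_{0}}}dv=0$ for $1\le j<i$ forces $a_{ij}=0$ for those $j$, so $\psi_{i}$ is in fact orthogonal to all of $u_{1},\dots,u_{i}$ automatically. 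Since $\psi_{i}$ also satisfies the clamped (Dirichlet) boundary conditions, the variational characterization of $\Lambda_{i+1}$ gives
\[
\Lambda_{i+1}\int_{\Omega}\psi_{i}^{2}e^{\langle\nu,X\rangle_{g_{0}}}dv\le\int_{\Omega}\psi_{i}\,\mathfrak{L}_{\nu}^{2}\psi_{i}\,e^{\langle\nu,X\rangle_{g_{0}}}dv,
\]
and I would expand the right-hand side using the self-adjointness of $\mathfrak{L}_{\nu}$ from \eqref{1.3} together with $\mathfrak{L}_{\nu}^{2}u_{1}=\Lambda_{1}u_{1}$, which produces a term $\Lambda_{1}\int\psi_{i}^{2}$ plus a ``commutator'' contribution coming from $\mathfrak{L}_{\nu}^{2}(\phi_{i}u_{1})-\phi_{i}\mathfrak{L}_{\nu}^{2}u_{1}$. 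The point of the auxiliary function $\xi$ in \eqref{xi} is precisely that this commutator term, after integration against $u_{1}e^{\langle\nu,X\rangle_{g_{0}}}$, is controlled by $\xi$; Lemma \ref{lem2.1} then says the ``diagonal'' part of it vanishes.

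Next I would carry out the bookkeeping: write $(\Lambda_{i+1}-\Lambda_{1})\int_{\Omega}\psi_{i}^{2}e^{\langle\nu,X\rangle_{g_{0}}}dv\le\int_{\Omega}\psi_{i}\,[\mathfrak{L}_{\nu}^{2},\phi_{i}]u_{1}\,e^{\langle\nu,X\rangle_{g_{0}}}dv$, expand the commutator acting on $\phi_{i}u_{1}$ explicitly (it is a third-order-plus-lower differential expression in $\phi_{i}$ with coefficients built from $u_{1}$ and $\mathfrak{L}_{\nu}u_{1}$, exactly the quantity appearing in $\xi$), and integrate by parts to move derivatives off $\psi_{i}$. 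Using Lemma \ref{lem2.1} to kill the $\phi_{i}u_{1}$-component of the commutator term leaves
\[
(\Lambda_{i+1}-\Lambda_{1})\int_{\Omega}|u_{1}\nabla\phi_{i}|^{2}_{g}e^{\langle\nu,X\rangle_{g_{0}}}dv\le-2\int_{\Omega}\bigl(u_{1}\mathfrak{L}_{\nu}\phi_{i}+2\langle\nabla\phi_{i},\nabla u_{1}\rangle_{g}\bigr)\langle\nabla\phi_{i},\nabla u_{1}\rangle_{g}\,u_{1}e^{\langle\nu,X\rangle_{g_{0}}}dv,
\]
or rather the analogue where the left side involves only $(\Lambda_{i+1}-\Lambda_{1})$ to the first power; then I would introduce a second trial expansion with $\phi_{i}\mathfrak{L}_{\nu}u_{1}$-type test functions (the standard ``second half'' of the Cheng--Yang trick) to bound $\int|u_{1}\nabla\phi_{i}|^{2}$ from below in terms of $(\Lambda_{i+1}-\Lambda_{1})^{-1}$ times a quadratic form, which is what upgrades the estimate to the $(\Lambda_{i+1}-\Lambda_{1})^{1/2}$ appearing on the left of \eqref{lem-2.2}. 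The free parameter $\delta$ enters through a Cauchy--Schwarz / Young inequality $2ab\le \delta a^{2}+\delta^{-1}b^{2}$ applied to the cross terms, and collecting the pieces gives exactly the combination $\tfrac12(\delta+\delta^{-1})\Upsilon(\phi_{i})-\delta\Phi(\phi_{i})$ with $\Upsilon,\Phi$ as defined in \eqref{Upsilon}, \eqref{Phi}.

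The main obstacle I anticipate is purely computational rather than conceptual: correctly expanding $\mathfrak{L}_{\nu}^{2}(\phi_{i}u_{1})$ and tracking all the first-order drift terms $\langle\nu,\nabla(\cdot)\rangle_{g_{0}}$ that the weight introduces, making sure every integration by parts carries the weight $e^{\langle\nu,X\rangle_{g_{0}}}$ correctly (this is where \eqref{1.3} and the divergence identity used in \eqref{le-eq3} must be invoked repeatedly). A subtler point is verifying that the boundary terms genuinely vanish: this uses $u_{1}=\partial u_{1}/\partial\mathbf{n}=0$ on $\partial\Omega$ together with $\phi_{i}\in C^{4}(\Omega)\cap C^{3}(\partial\Omega)$, and one must check that each integration by parts only produces boundary integrands containing a factor of $u_{1}$ or $\nabla u_{1}\cdot\mathbf{n}$ with no uncompensated third derivatives of $u_{1}$ landing on the boundary — the clamped conditions are exactly strong enough for this. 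Once the weighted Bochner-type bookkeeping is done carefully and Lemma \ref{lem2.1} is applied to discard the diagonal commutator contribution, assembling \eqref{lem-2.2} is a matter of one Young inequality and regrouping; I would keep $\delta$ symbolic throughout and only later (in Section \ref{sec4}) optimize it.
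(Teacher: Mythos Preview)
Your overall framework is right and matches the paper: set $\psi_i=(\phi_i-a_i)u_1$ with $a_i=\int_\Omega\phi_i u_1^2\,e^{\langle\nu,X\rangle_{g_0}}dv$ (the hypothesis on $\phi_i$ makes $\psi_i$ orthogonal to $u_1,\dots,u_i$), apply Rayleigh--Ritz, expand $\mathfrak{L}_\nu^2\psi_i=\epsilon_i+\Lambda_1\psi_i$ with $\epsilon_i$ the commutator expression, and use Lemma~\ref{lem2.1} to drop the $a_i\int\epsilon_i u_1$ term. After integrations by parts one gets exactly
\[
(\Lambda_{i+1}-\Lambda_1)\int_\Omega\psi_i^2\,e^{\langle\nu,X\rangle_{g_0}}dv\;\le\;\int_\Omega\Upsilon(\phi_i)\,e^{\langle\nu,X\rangle_{g_0}}dv-2\int_\Omega\Phi(\phi_i)\,e^{\langle\nu,X\rangle_{g_0}}dv,
\]
not the inequality you displayed with $\int|u_1\nabla\phi_i|^2$ on the left.

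The place where your plan goes off track is the mechanism producing the square root. There is no ``second trial expansion with $\phi_i\mathfrak{L}_\nu u_1$-type test functions''; that step is unnecessary and if you try to build it you will likely not recover the clean form \eqref{lem-2.2}. What the paper actually does is prove the elementary \emph{identity}
\[
\int_\Omega\psi_i\bigl(u_1\mathfrak{L}_\nu\phi_i+2\langle\nabla\phi_i,\nabla u_1\rangle_g\bigr)e^{\langle\nu,X\rangle_{g_0}}dv=-\int_\Omega|u_1\nabla\phi_i|_g^2\,e^{\langle\nu,X\rangle_{g_0}}dv
\]
(one more integration by parts, again using Lemma~\ref{lem2.1} to drop the $a_i$-term), then multiplies both sides by $(\Lambda_{i+1}-\Lambda_1)^{1/2}$ and applies Young's inequality $ab\le\frac{\delta}{2}a^2+\frac{1}{2\delta}b^2$ with $a=(\Lambda_{i+1}-\Lambda_1)^{1/2}\psi_i$ and $b=u_1\mathfrak{L}_\nu\phi_i+2\langle\nabla\phi_i,\nabla u_1\rangle_g$. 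The term $\frac{\delta}{2}(\Lambda_{i+1}-\Lambda_1)\int\psi_i^2$ is then bounded using the Rayleigh--Ritz inequality already obtained, and the pieces combine directly to \eqref{lem-2.2}. So the $\tfrac12$-power comes from writing $\int|u_1\nabla\phi_i|^2$ as a product involving $\psi_i$ and splitting that product with Young, not from a second variational step.
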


\begin{proof}
In order to prove \eqref{lem-2.2}, let us define

\begin{equation}
\psi_{i}:=\left(\phi_{i}-a_{i}\right) u_{1},
\end{equation}
where $i\geq2$ and $$a_{i}=\int_{\Omega} \phi_{i} u_{1}^{2} e^{\langle\nu,X\rangle_{g_{0}}}dv.$$ It is not difficult to check that

$$\int_{\Omega} \psi_{i} u_{1} e^{\langle\nu,X\rangle_{g_{0}}}dv=0 .$$ Noticing $$\int_{\Omega} \phi_{i} u_{1} u_{j+1} e^{\langle\nu,X\rangle_{g_{0}}
}dv=0 \ \ {\rm for}\ \ 1 \leq j<i,$$ we infer

$$
\int_{\Omega} \psi_{i} u_{j+1} e^{\langle\nu,X\rangle_{g_{0}}}dv=0, \text { for } 1 \leq j<i,$$
and

$$ \psi_{i}\bigg|_{\partial \Omega}=\frac{\partial \psi_{i}}{\partial \nu}\bigg|_{\partial \Omega}=0.
$$
From the Rayleigh-Ritz inequality, we have

\begin{equation}\label{RR-2}
\Lambda_{i+1} \int_{\Omega} \psi_{i}^{2} e^{\langle\nu,X\rangle_{g_{0}}}
dv \leq \int_{\Omega} \psi_{i} \mathfrak{L}_{\nu}^{2} \psi_{i} e^{\langle\nu,X\rangle_{g_{0}}}dv.
\end{equation}
According to the definition of the function $\psi_{i},$ one has
$$
\begin{aligned}
\mathfrak{L}_{\nu}\left(\psi_{i}\right) = \mathfrak{L}_{\nu}\left(\phi_{i} u_{1}\right)-a_{i} \mathfrak{L}_{\nu} u_{1}
=u_{1} \mathfrak{L}_{\nu}\phi_{i}+2\left\langle\nabla \phi_{i}, \nabla u_{1}\right\rangle_{g}+\phi_{i} \mathfrak{L}_{\nu} u_{1}-a_{i} \mathfrak{L}_{\nu} u_{1},
\end{aligned}
$$
and

\begin{equation*}
\begin{aligned}
\mathfrak{L}_{\nu}^{2}\left(\psi_{i}\right) &= \mathfrak{L}_{\nu}\left(\mathfrak{L}_{\nu}\left(\psi_{i}\right) \right) \\
&= \mathfrak{L}_{\nu}\left(u_{1} \mathfrak{L}_{\nu} \phi_{i}+2\left\langle\nabla \phi_{i}, \nabla u_{1}\right\rangle_{g}+\phi_{i} \mathfrak{L}_{\nu} u_{1}-a_{i} \mathfrak{L}_{\nu} u_{1}\right)\\&= u_{1} \mathfrak{L}_{\nu}^{2} \phi_{i}+2\left\langle\nabla u_{1}, \nabla\left( \mathfrak{L}_{\nu} \phi_{i}\right)\right\rangle_{g}+2 \mathfrak{L}_{\nu} \phi_{i} \mathfrak{L}_{\nu} u_{1}+2 \mathfrak{L}_{\nu}\left\langle\nabla \phi_{i}, \nabla u_{1}\right\rangle_{g} \\
&+2\left\langle\nabla \phi_{i}, \nabla\left( \mathfrak{L}_{\nu} u_{1}\right)\right\rangle_{g}+\phi_{i} \mathfrak{L}_{\nu}^{2} u_{1}-a_{i} \mathfrak{L}_{\nu}^{2} u_{1} \\&
= \tau_{i}+\Lambda_{1} \psi_{i},
\end{aligned}
\end{equation*}where
$$
\begin{aligned}
\epsilon_{i}= u_{1} \mathfrak{L}_{\nu}^{2} \phi_{i}+2\left\langle\nabla u_{1}, \nabla\left( \mathfrak{L}_{\nu} \phi
_{i}\right)\right\rangle_{g}+2 \mathfrak{L}_{\nu} \phi_{i} \mathfrak{L}_{\nu} u_{1}
&+2 \mathfrak{L}_{\nu}\left\langle\nabla \phi_{i}, \nabla u_{1}\right\rangle_{g}+2\left\langle\nabla \phi
_{i}, \nabla\left( \mathfrak{L}_{\nu} u_{1}\right)\right\rangle_{g}.
\end{aligned}
$$From \eqref{RR-2}, we conclude that

\begin{equation}\begin{aligned}\label{RR-2-1}
\left(\Lambda_{i+1}-\Lambda_{1}\right) \int_{\Omega} \psi_{i}^{2} e^{\langle\nu,X\rangle_{g_{0}}}
dv &\leq \int_{\Omega} \psi_{i} \epsilon_{i} e^{\langle\nu,X\rangle_{g_{0}}
}dv\\&=\int_{\Omega} \epsilon_{i} \phi_{i} u_{1} e^{\langle\nu,X\rangle_{g_{0}}}dv-a_{i} \int_{\Omega} \epsilon_{i} u_{1} e^{\langle\nu,X\rangle_{g_{0}}}dv.
\end{aligned}\end{equation}
By lemma \ref{xi}, we know that

\begin{equation}\label{tau-u1=0}
\int_{\Omega} \epsilon_{i} u_{1} e^{\langle\nu,X\rangle_{g_{0}}}dv=0.
\end{equation}
Applying  Stokes' theorem, we have the following equalities:

\begin{equation}\label{inea-1}
\begin{aligned}&
2 \int_{\Omega} \phi_{i} u_{1}\left\langle\nabla u_{1}, \nabla\left( \mathfrak{L}_{\nu} \phi_{i}\right)\right\rangle_{g}
 e^{\langle\nu,X\rangle_{g_{0}}}dv \\&
\quad=\int_{\Omega}\left(2 u_{1} \mathfrak{L}_{\nu} \phi_{i}\left\langle\nabla u_{1}, \nabla \phi_{i}\right\rangle_{g}+u_{1}^{2}\left( \mathfrak{L}_{\nu} \phi_{i}\right)^{2}-\phi_{i} u_{1}^{2} \mathfrak{L}_{\nu}^{2} \phi_{i}\right) e^{\langle\nu,X\rangle_{g_{0}}}dv,\end{aligned}
\end{equation}

\begin{equation}\label{inea-2}
\begin{aligned}&
2 \int_{\Omega} \phi_{i} u_{1} \mathfrak{L}_{\nu}\left\langle\nabla \phi_{i}, \nabla u_{1}\right\rangle_{g} e^{\langle\nu,X\rangle_{g_{0}}}dv \\&
\quad=\int_{\Omega}\left(2 \mathfrak{L}_{\nu} \phi_{i} u_{1}\left\langle\nabla \phi_{i}, \nabla u_{1}\right\rangle_{g}+4\left\langle\nabla \phi_{i}, \nabla u_{1}\right\rangle_{g}^{2}+2 \phi_{i} \mathfrak{L}_{\nu} u_{1}\left\langle\nabla \phi_{i}, \nabla u_{1}\right\rangle_{g}\right) e^{\langle\nu,X\rangle_{g_{0}}}dv,\end{aligned}
\end{equation}
and

\begin{equation}\label{inea-3}
\begin{aligned}
&2 \int_{\Omega} \phi_{i} u_{1}\left\langle\nabla \phi_{i}, \nabla\left( \mathfrak{L}_{\nu} u_{1}\right)\right\rangle_{g} e^{\langle\nu,X\rangle_{g_{0}}}dv \\&
\quad=-2 \int_{\Omega}\left(\left|\nabla \phi_{i}\right|^{2}_{g} u_{1} \mathfrak{L}_{\nu} u_{1}+\phi_{i}\mathfrak{L}_{\nu} u_{1}\left\langle\nabla \phi_{i}, \nabla u_{1}\right\rangle_{g}+\phi_{i} u_{1} \mathfrak{L}_{\nu} \phi_{i} \mathfrak{L}_{\nu} u_{1}\right) e^{\langle\nu,X\rangle_{g_{0}}}dv.
\end{aligned}
\end{equation}
Combining \eqref{inea-1}-\eqref{inea-3}, we infer that

\begin{equation}
\begin{aligned}\label{th-i-ineq}
&\int_{\Omega} \epsilon_{i} \phi_{i} u_{1} e^{\langle\nu,X\rangle_{g_{0}}}dv\\&= \int_{\Omega}\left(\left( \mathfrak{L}_{\nu} \phi_{i}\right)^{2} u_{1}^{2}+4\left\langle\nabla \phi_{i}, \nabla u_{1}\right\rangle_{g}^{2}+4 u_{1}  \mathfrak{L}_{\nu} \phi_{i}\left\langle\nabla \phi_{i}, \nabla u_{1}\right\rangle_{g}\right) e^{\langle\nu,X\rangle_{g_{0}}}dv \\
&\quad\quad -\int_{\Omega} 2\left|\nabla \phi_{i}\right|^{2}_{g} u_{1} \mathfrak{L}_{\nu} u_{1} e^{\langle\nu,X\rangle_{g_{0}}}dv
\\&
=\int_{\Omega}\left(u_{1} \mathfrak{L}_{\nu} \phi_{i}+2 \left\langle\nabla \phi_{i}, \nabla u_{1}\right\rangle_{g}\right)^{2} e^{\langle\nu,X\rangle_{g_{0}}}dv-\int_{\Omega} 2\left|\nabla \phi_{i}\right|_{g}^{2} u_{1} \mathfrak{L}_{\nu} u_{1} e^{\langle\nu,X\rangle_{g_{0}}}dv.
\end{aligned}
\end{equation}
Substituting  \eqref{tau-u1=0} and \eqref{th-i-ineq} into \eqref{RR-2-1}, one can conclude that

\begin{equation*}
\begin{aligned}
\left(\Lambda_{i+1}-\Lambda_{1}\right) \int_{\Omega} \psi_{i}^{2} e^{\langle\nu,X\rangle_{g_{0}}}dv &\leq \int_{\Omega}\left(u_{1} \mathfrak{L}_{\nu} \phi_{i}+2 \left\langle\nabla \phi_{i}, \nabla u_{1}\right\rangle_{g}
\right)^{2} e^{\langle\nu,X\rangle_{g_{0}}}dv\\&\quad -\int_{\Omega} 2\left|\nabla \phi_{i}\right|_{g}^{2} u_{1} \mathfrak{L}_{\nu} u_{1} e^{\langle\nu,X\rangle_{g_{0}}}dv.
\end{aligned}
\end{equation*}
On the other hand, we have$$\begin{aligned}
\int_{\Omega} & \psi_{i}\left(u_{1} \mathfrak{L}_{\nu} \phi
_{i}+2\langle \nabla \phi_{i},\nabla u_{1}\rangle_{g}\right) e^{\langle\nu,X\rangle_{g_{0}}}dv \\
&=\int_{\Omega}\left(\phi_{i}-a_{i}\right) u_{1}\left(u_{1} \mathfrak{L}_{\nu} \phi_{i}+2\langle \nabla \phi_{i},\nabla u_{1}\rangle_{g}\right) e^{\langle\nu,X\rangle_{g_{0}}}dv \\
&=\int_{\Omega} \phi
_{i} u_{1}\left(u_{1} \mathfrak{L}_{\nu} \phi_{i}+2\langle \nabla \phi_{i},\nabla u_{1}\rangle_{g}\right) e^{\langle\nu,X\rangle_{g_{0}}}dv \\
&=\int_{\Omega}\left(\phi
_{i} u_{1}^{2} \mathfrak{L}_{\nu} \phi_{i}+2 \phi_{i} u_{1}\left\langle\nabla \phi_{i}, \nabla u_{1}\right\rangle_{g}\right) e^{\langle\nu,X\rangle_{g_{0}}}dv \\
&=-\int_{\Omega}\left|u_{1} \nabla \phi_{i}\right|^{2}_{g} e^{\langle\nu,X\rangle_{g_{0}}}dv.
\end{aligned}$$
So, for any $\delta>0,$ we have
\begin{equation*}\begin{aligned}&
\left(\Lambda_{i+1}-\Lambda_{1}\right)^{\frac{1}{2}} \int_{\Omega}\left|u_{1} \nabla \phi_{i}\right|_{g}^{2} e^{\langle\nu,X\rangle_{g_{0}}}dv \\&
\quad=\left(\Lambda_{i+1}-\Lambda_{1}\right)^{\frac{1}{2}} \int_{\Omega}-\psi_{i}\left(u_{1} \mathfrak{L}_{\nu} \phi_{i}+2\langle \nabla \phi_{i},\nabla u_{1}\rangle_{g}\right) e^{\langle\nu,X\rangle_{g_{0}}}dv \\&
\quad \leq \frac{\delta}{2}\left(\Lambda_{i+1}-\Lambda_{1}\right) \int_{\Omega} \psi_{i}^{2}e^{\langle\nu,X\rangle_{g_{0}}}dv+\frac{1}{2 \delta} \int_{\Omega}\left(u_{1} \mathfrak{L}_{\nu} \phi_{i}+2\langle \nabla \phi_{i},\nabla u_{1}\rangle_{g}\right)^{2} e^{\langle\nu,X\rangle_{g_{0}}}dv \\&
\quad \leq\left(\frac{\delta}{2}+\frac{1}{2 \delta}\right) \int_{\Omega}\left(u_{1} \mathfrak{L}_{\nu} \phi_{i}+2\langle \nabla \phi_{i},\nabla u_{1}\rangle_{g}\right)^{2} e^{\langle\nu,X\rangle_{g_{0}}}dv-\delta \int_{\Omega}\left|\nabla \phi
_{i}\right|_{g}^{2} u_{1} \mathfrak{L}_{\nu} u_{1} e^{\langle\nu,X\rangle_{g_{0}}}dv,
\end{aligned}\end{equation*}
which means that \eqref{lem-2.2} is true. This completes the proof of Lemma \ref{lemma2.2}.

\end{proof}

\section{Some Results of Chen-Cheng Type}\label{sec3}

\vskip3mm \noindent In order to prove our main results, the
following lemmas will play  very important roles. The first lemma reads as follows:

\begin{lem}\label{lem3.1}Let $\mathcal{M}^{n}$ be
   an $n$-dimensional  submanifold  in Euclidean space
$\mathbb{R}^{n+p}$,  and $y=(y^{1},y^{2},\cdots,y^{n+p})$ be the
position vector of a point $p\in \mathcal{M}^{n}$ with
$y^{\alpha}=y^{\alpha}(x^{1}, \cdots,x^{n})$, $1\leq \alpha\leq
n+p$, where $(x^{1}, \cdots, x^{n})$ denotes a local coordinate
system of $\mathcal{M}^n$. Then, we have
\begin{equation}\label{n-ine}
\sum^{n+p}_{\alpha=1}\langle\nabla y^{\alpha},\nabla y^{\alpha}\rangle_{g}= n,
\end{equation}
\begin{equation}
\begin{aligned}\label{uw-ine}
\sum^{n+p}_{\alpha=1}\langle\nabla y^{\alpha},\nabla u\rangle_{g}\langle\nabla
y^{\alpha},\nabla w\rangle_{g}=\langle\nabla u,\nabla w\rangle_{g},
\end{aligned}
\end{equation}
for any functions  $u, w\in C^{1}(\mathcal{M}^{n})$,
\begin{equation}
\begin{aligned}\label{nH-ine}
\sum^{n+p}_{\alpha=1}(\Delta y^{\alpha})^{2}=n^{2}H^{2},
\end{aligned}
\end{equation}
\begin{equation}
\begin{aligned}\label{0-ine}
\sum^{n+p}_{\alpha=1}\Delta y^{\alpha}\nabla y^{\alpha}= \textbf{0},
\end{aligned}
\end{equation}
where $H$ is the mean curvature of $\mathcal{M}^{n}$.
\end{lem}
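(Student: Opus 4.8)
The statement to prove is Lemma \ref{lem3.1}, which collects four classical identities for a submanifold $\mathcal{M}^n \subset \mathbb{R}^{n+p}$ in terms of the coordinate functions $y^\alpha$. The plan is to work in a fixed local orthonormal frame $\{e_1,\dots,e_n\}$ on $\mathcal{M}^n$ (obtained, if one likes, by Gram--Schmidt applied to $\partial/\partial x^i$), and to exploit the single structural fact that the position map is an isometric immersion: the tangential projection of the ambient constant frame $\{\partial_\alpha\}$ is an orthonormal-adapted decomposition, so that $\sum_\alpha (e_i y^\alpha)(e_j y^\alpha) = \langle e_i, e_j\rangle_g = \delta_{ij}$. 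This is really the only input, together with the Gauss formula relating $\Delta y^\alpha$ to the second fundamental form.

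First I would prove \eqref{n-ine} and \eqref{uw-ine} together. Writing $\nabla u = \sum_i (e_i u) e_i$, we have $\langle \nabla y^\alpha, \nabla u\rangle_g = \sum_i (e_i y^\alpha)(e_i u)$, hence
$$\sum_{\alpha=1}^{n+p}\langle\nabla y^\alpha,\nabla u\rangle_g\langle\nabla y^\alpha,\nabla w\rangle_g = \sum_{i,j}(e_i u)(e_j w)\sum_\alpha (e_i y^\alpha)(e_j y^\alpha) = \sum_{i,j}(e_i u)(e_j w)\delta_{ij} = \langle\nabla u,\nabla w\rangle_g,$$
where the key middle identity $\sum_\alpha (e_i y^\alpha)(e_j y^\alpha)=\delta_{ij}$ is precisely the statement that $e_i = \sum_\alpha (e_i y^\alpha)\,\partial_\alpha$ has the Euclidean inner products dictated by the induced metric. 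Taking $u = w = y^\beta$ and summing over $\beta$ recovers \eqref{n-ine}: $\sum_\alpha \langle\nabla y^\alpha,\nabla y^\alpha\rangle_g = \sum_{\alpha,\beta}\langle\nabla y^\alpha,\nabla y^\beta\rangle_g^2$ evaluated appropriately — more directly, $\sum_\alpha \langle \nabla y^\alpha,\nabla y^\alpha\rangle_g = \sum_\alpha \sum_i (e_i y^\alpha)^2 = \sum_i \delta_{ii} = n$.

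Next I would handle \eqref{nH-ine} and \eqref{0-ine} via the Gauss formula. For the isometric immersion $y$, one has $\overline{\nabla}_{e_i} e_j = \nabla_{e_i} e_j + h(e_i,e_j)$ where $\overline{\nabla}$ is the flat ambient connection and $h$ the (vector-valued) second fundamental form; applying this to the coordinate components and tracing gives the well-known identity $\Delta y^\alpha = \langle \mathbf{H}', \partial_\alpha\rangle_{g_0}\cdot$(normalization) — concretely $\sum_\alpha (\Delta y^\alpha)\,\partial_\alpha = n\mathbf{H}$, the mean curvature vector, which is normal to $\mathcal{M}^n$. Then \eqref{nH-ine} follows by squaring: $\sum_\alpha (\Delta y^\alpha)^2 = |n\mathbf{H}|_{g_0}^2 = n^2 H^2$. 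For \eqref{0-ine}, since $\nabla y^\alpha$ is the tangential part of $\partial_\alpha$, the vector $\sum_\alpha (\Delta y^\alpha)\nabla y^\alpha$ is the tangential projection of the normal vector $\sum_\alpha (\Delta y^\alpha)\partial_\alpha = n\mathbf{H}$, hence vanishes; alternatively, pair it with an arbitrary tangent vector $e_k$ and use $\sum_\alpha (\Delta y^\alpha)(e_k y^\alpha) = \langle n\mathbf{H}, e_k\rangle_{g_0} = 0$.

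The only genuine subtlety — and thus the step I would write out most carefully — is establishing $\sum_\alpha (\Delta y^\alpha)\,\partial_\alpha = n\mathbf{H}$, i.e. the Beltrami-type formula $\Delta y = n\mathbf{H}$ for the position vector of a submanifold. This requires being careful that $\Delta$ is the manifold Laplacian (Laplace--Beltrami on $(\mathcal{M}^n,g)$, not the ambient one) and invoking the Gauss equation in the form $\Delta y^\alpha = \sum_i (\overline{\nabla}_{e_i}\overline{\nabla}_{e_i} y^\alpha - (\nabla_{e_i} e_i)y^\alpha) = \sum_i \langle h(e_i,e_i), \partial_\alpha\rangle_{g_0}$, whose trace is $n$ times the mean curvature vector. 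Everything else is bookkeeping in the orthonormal frame. I would present the four identities in the stated order, deriving \eqref{uw-ine} first as the master identity, specializing to \eqref{n-ine}, then invoking $\Delta y = n\mathbf{H}$ for \eqref{nH-ine} and \eqref{0-ine}.
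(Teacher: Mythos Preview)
Your argument is correct and is the standard derivation of these identities. The paper itself does not give a proof of this lemma; it simply writes ``A proof of lemma~\ref{lem3.1} can be found in \cite{CC}. Or see \cite{CHW}.'' What you have written is essentially the proof one finds in those references: use an orthonormal frame $\{e_i\}$ together with the isometric-immersion identity $\sum_\alpha (e_i y^\alpha)(e_j y^\alpha)=\delta_{ij}$ to get \eqref{uw-ine} and \eqref{n-ine}, and use the Beltrami formula $\Delta y = n\mathbf{H}$ (a normal vector) for \eqref{nH-ine} and \eqref{0-ine}. One minor presentational remark: your parenthetical ``Taking $u=w=y^\beta$ and summing over $\beta$ recovers \eqref{n-ine}: $\sum_\alpha\langle\nabla y^\alpha,\nabla y^\alpha\rangle_g=\sum_{\alpha,\beta}\langle\nabla y^\alpha,\nabla y^\beta\rangle_g^2$ evaluated appropriately'' is a bit garbled---the direct computation you give immediately afterward is cleaner and suffices on its own.
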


 A proof of lemma \ref{lem3.1} can be
found in \cite{CC}. Or see \cite{CHW}. Similarly, we have the following lemma.
\begin{lem}\label{lem3.2}
Let $\left(x^{1}, \cdots, x^{n}\right)$ be an arbitrary coordinate system in a neighborhood $U$ of $P$ in $\mathcal{M}^{n} .$ Assume that $y$ with components $y^{\alpha}$ defined by
$
y^{\alpha}=y^{\alpha}\left(x^{1}, \cdots, x^{n}\right),  1 \leq \alpha \leq n+p,
$
is the position vector of $P$ in $\mathbb{R} ^{n+p}$.
Then, we have
\begin{equation}\label{v-2}
\sum_{\alpha=1}^{n+p}\left\langle\nabla y^{\alpha}, \nu\right\rangle_{g_{0}}^{2}=|\nu^{\top}|_{g_{0}}^{2},\end{equation}
where $\nabla$ is the gradient operator on $\mathcal{M}^{n}$.

\end{lem}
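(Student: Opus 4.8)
The plan is to reduce the statement to a pointwise linear‑algebra identity about the coordinate gradients of the position vector, in exactly the spirit of Lemma \ref{lem3.1}. Fix $P\in\mathcal{M}^{n}$ and choose a local orthonormal frame $\{e_{1},\dots,e_{n}\}$ of $(\mathcal{M}^{n},g)$ near $P$. The first step is the elementary identity $\nabla y^{\alpha}=\sum_{i=1}^{n}e_{i}(y^{\alpha})\,e_{i}$, together with the observation that, since $y^{\alpha}=\langle y,E_{\alpha}\rangle_{g_{0}}$ is the $\alpha$‑th component of the position vector (here $\{E_{1},\dots,E_{n+p}\}$ is the standard basis of $\mathbb{R}^{n+p}$), one has $e_{i}(y^{\alpha})=\langle e_{i},E_{\alpha}\rangle_{g_{0}}$, tangent vectors to $\mathcal{M}^{n}$ being literally vectors of $\mathbb{R}^{n+p}$ under the isometric immersion. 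Consequently $\nabla y^{\alpha}=\sum_{i}\langle E_{\alpha},e_{i}\rangle_{g_{0}}\,e_{i}$ is just the orthogonal projection of $E_{\alpha}$ onto $T_{P}\mathcal{M}^{n}$.

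The key subtlety to handle with care is that $\langle\nabla y^{\alpha},\nu\rangle_{g_{0}}$ pairs a vector tangent to $\mathcal{M}^{n}$ with the constant ambient vector $\nu$. From the first step, $\langle\nabla y^{\alpha},\nu\rangle_{g_{0}}=\sum_{i}\langle E_{\alpha},e_{i}\rangle_{g_{0}}\langle e_{i},\nu\rangle_{g_{0}}$. Squaring and summing over $\alpha$, I would interchange the order of summation and use the completeness relation for the standard basis, namely $\sum_{\alpha=1}^{n+p}\langle E_{\alpha},e_{i}\rangle_{g_{0}}\langle E_{\alpha},e_{j}\rangle_{g_{0}}=\langle e_{i},e_{j}\rangle_{g_{0}}=\delta_{ij}$, to collapse the double sum and obtain $\sum_{\alpha}\langle\nabla y^{\alpha},\nu\rangle_{g_{0}}^{2}=\sum_{i}\langle e_{i},\nu\rangle_{g_{0}}^{2}$.

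The final step is to recognize that $\sum_{i}\langle e_{i},\nu\rangle_{g_{0}}^{2}=|\nu^{\top}|_{g_{0}}^{2}$: by definition the tangential projection is $\nu^{\top}=\sum_{i}\langle\nu,e_{i}\rangle_{g_{0}}e_{i}$, so its squared norm with respect to $g$ — which coincides with $|\cdot|_{g_{0}}$ on tangent vectors because the immersion is isometric — is precisely $\sum_{i}\langle\nu,e_{i}\rangle_{g_{0}}^{2}$. This gives \eqref{v-2}. An essentially equivalent route, if one prefers to cite Lemma \ref{lem3.1} rather than redo the frame computation, is to first check $\langle\nabla y^{\alpha},E_{\beta}\rangle_{g_{0}}=\langle\nabla y^{\alpha},\nabla y^{\beta}\rangle_{g}$, so that $\langle\nabla y^{\alpha},\nu\rangle_{g_{0}}=\sum_{\beta}\nu^{\beta}\langle\nabla y^{\alpha},\nabla y^{\beta}\rangle_{g}$, and then apply \eqref{uw-ine} with $u=y^{\beta}$, $w=y^{\gamma}$ to reduce $\sum_{\alpha}\langle\nabla y^{\alpha},\nu\rangle_{g_{0}}^{2}$ to $\sum_{\beta,\gamma}\nu^{\beta}\nu^{\gamma}\langle\nabla y^{\beta},\nabla y^{\gamma}\rangle_{g}=|\sum_{\beta}\nu^{\beta}\nabla y^{\beta}|_{g}^{2}=|\nabla\langle\nu,y\rangle_{g_{0}}|_{g}^{2}=|\nu^{\top}|_{g_{0}}^{2}$.

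I do not expect any genuine obstacle: the statement is a pointwise identity with no analytic content, and the only thing to watch is the bookkeeping — keeping straight which inner product ($g$ versus $g_{0}$) and which summation range (the index $i$ over the tangent frame versus the index $\alpha$ over the ambient standard basis) is in play at each stage, and remembering that $g$ and $g_{0}$ agree on vectors tangent to $\mathcal{M}^{n}$, which is what lets one pass freely between $|\nu^{\top}|_{g}$ and $|\nu^{\top}|_{g_{0}}$.
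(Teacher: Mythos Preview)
Your proposal is correct and is essentially the same argument as the paper's, only spelled out in more detail. The paper compresses everything into a single line: since $\nabla y^{\alpha}$ is tangent, $\langle\nabla y^{\alpha},\nu\rangle_{g_{0}}=\langle\nabla y^{\alpha},\nu^{\top}\rangle_{g_{0}}=\nu^{\top}(y^{\alpha})$, and then $\sum_{\alpha}(\nu^{\top}y^{\alpha})^{2}=|\nu^{\top}|_{g_{0}}^{2}$ because $\nu^{\top}(y^{\alpha})$ is the $\alpha$-th ambient component of $\nu^{\top}$; your orthonormal-frame computation unpacks exactly these steps.
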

\begin{proof} Equality \eqref{v-2} can be proved as follows:$$\begin{aligned}
\sum_{\alpha=1}^{n+p}\langle\nabla y^{\alpha}, v\rangle_{g_{0}}^{2}&=\sum_{\alpha=1}^{n+p}\langle\nabla y^{\alpha}, \nu^{\top}\rangle_{g_{0}}^{2}
=\sum_{\alpha=1}^{n+p}\left(\nu^{\top} y^{\alpha}\right)^{2}=\left|\nu^{\top}\right|_{g_{0}}^{2}.
\end{aligned}
$$
  Therefore, it finishes the proof of lemma \ref{lem3.2}.\end{proof}

\begin{lem}\label{lem3.3}
Let $\left(x^{1}, \cdots, x^{n}\right)$ be an arbitrary coordinate system in a neighborhood $U$ of $P$ in $\mathcal{M}^{n} .$ Assume that $y$ with components $y^{\alpha}$ defined by
$
y^{\alpha}=y^{\alpha}\left(x^{1}, \cdots, x^{n}\right), 1 \leq \alpha \leq n+p,
$
is the position vector of $P$ in $\mathbb{R} ^{n+p}$.
Then, we have

\begin{equation}\label{uv-ineq}
\sum_{\alpha=1}^{n+p}\left\langle\nabla y^{\alpha}, \nabla u\right\rangle_{g}\left\langle\nabla y^{\alpha}, \nu\right\rangle_{g_{0}}\leq|\nabla u|_{g}|\nu^{\top}|_{g_{0}},\end{equation}
where $\nabla$ is the gradient operator on $\mathcal{M}^{n}$.

\end{lem}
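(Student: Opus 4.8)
The plan is to reduce the inequality to the Cauchy--Schwarz inequality in $\mathbb{R}^{n+p}$, applied fibrewise at the point $P$. First I would observe that, as in the proof of Lemma \ref{lem3.2}, for each $\alpha$ we may replace $\nu$ by its tangential part $\nu^{\top}$ in the factor $\langle\nabla y^{\alpha},\nu\rangle_{g_{0}}$, since $\nabla y^{\alpha}$ is a tangent vector to $\mathcal{M}^{n}$ and the normal component of $\nu$ contributes nothing; thus $\langle\nabla y^{\alpha},\nu\rangle_{g_{0}}=\langle\nabla y^{\alpha},\nu^{\top}\rangle_{g}$. Likewise I would interpret the left-hand side of \eqref{uv-ineq} as an inner product of two vectors in $\mathbb{R}^{n+p}$ whose $\alpha$-th components are $\langle\nabla y^{\alpha},\nabla u\rangle_{g}$ and $\langle\nabla y^{\alpha},\nu^{\top}\rangle_{g}$ respectively.

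The key step is then to apply the Cauchy--Schwarz inequality in $\mathbb{R}^{n+p}$ to these two vectors:
\begin{equation*}
\sum_{\alpha=1}^{n+p}\langle\nabla y^{\alpha},\nabla u\rangle_{g}\langle\nabla y^{\alpha},\nu^{\top}\rangle_{g}
\leq\left(\sum_{\alpha=1}^{n+p}\langle\nabla y^{\alpha},\nabla u\rangle_{g}^{2}\right)^{\frac{1}{2}}
\left(\sum_{\alpha=1}^{n+p}\langle\nabla y^{\alpha},\nu^{\top}\rangle_{g}^{2}\right)^{\frac{1}{2}}.
\end{equation*}
Now I would evaluate each of the two sums on the right using the already-established identities. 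By \eqref{uw-ine} of Lemma \ref{lem3.1} with $w=u$, the first sum equals $\langle\nabla u,\nabla u\rangle_{g}=|\nabla u|_{g}^{2}$. By \eqref{v-2} of Lemma \ref{lem3.2}, the second sum equals $|\nu^{\top}|_{g_{0}}^{2}$. Substituting these two evaluations yields exactly the bound $|\nabla u|_{g}\,|\nu^{\top}|_{g_{0}}$ claimed in \eqref{uv-ineq}.

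There is no genuine obstacle here; the lemma is a straightforward consequence of Cauchy--Schwarz together with Lemmas \ref{lem3.1} and \ref{lem3.2}. The only point requiring a modicum of care is the bookkeeping in the first step, namely the justification that $\langle\nabla y^{\alpha},\nu\rangle_{g_{0}}$ may be computed against $\nu^{\top}$ rather than $\nu$, and the consistent identification of $\langle\cdot,\cdot\rangle_{g}$ on tangent vectors with the restriction of $\langle\cdot,\cdot\rangle_{g_{0}}$; once this is in place, the two invocations of the previous lemmas finish the argument immediately.
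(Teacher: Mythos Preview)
Your proof is correct and follows essentially the same route as the paper: apply the Cauchy--Schwarz inequality in $\mathbb{R}^{n+p}$ to the two vectors with components $\langle\nabla y^{\alpha},\nabla u\rangle_{g}$ and $\langle\nabla y^{\alpha},\nu\rangle_{g_{0}}$, then evaluate the resulting sums via \eqref{uw-ine} and \eqref{v-2}. Your preliminary reduction of $\nu$ to $\nu^{\top}$ is a harmless extra clarification---the paper skips it because Lemma~\ref{lem3.2} already absorbs that step.
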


\begin{proof}By the Cauchy-Schwarz inequality, we have

\begin{equation}\label{yyuv}
\begin{aligned}\sum_{\alpha=1}^{n+p}\left\langle\nabla y^{\alpha}, \nabla u\right\rangle_{g}\left\langle\nabla y^{\alpha}, \nu\right\rangle_{g_{0}}&\leq\left(\sum_{\alpha=1}^{n+p}\left\langle\nabla y^{\alpha}, \nabla u\right\rangle_{g}^{2}\right)^{\frac{1}{2}}\left(\sum_{\alpha=1}^{n+p}\left\langle\nabla y^{\alpha}, \nu\right\rangle_{g_{0}}^{2}\right)^{\frac{1}{2}}.
\end{aligned}
\end{equation}
It follows from \eqref{uw-ine} that,

\begin{equation}\label{yu-ineq}\sum_{\alpha=1}^{n+p}\left\langle\nabla y^{\alpha}, \nabla u\right\rangle_{g}^{2}=|\nabla u|_{g}^{2}.\end{equation}
From \eqref{v-2}, \eqref{yyuv} and \eqref{yu-ineq}, we get \eqref{uv-ineq}. Therefore, we finish the proof of this lemma.
\end{proof}
From \eqref{n-ine}, we have
\begin{equation}
\int_{\Omega} u_{i}^{2} \sum_{\alpha=1}^{n+p}\left|\nabla y^{\alpha}\right|_{g}^{2} e^{\langle\nu,X\rangle_{g_{0}}}dv=n.
\end{equation}
According to \eqref{uw-ine}, one has

\begin{equation}\label{na-ui-2}
\sum_{\alpha=1}^{n+p}\left\langle\nabla y^{\alpha}, \nabla u_{i}\right\rangle_{g}^{2}=\left|\nabla u_{i}\right|_{g}^{2}.\end{equation}
It follows from \eqref{0-ine} that,
\begin{equation}\label{xx0-ineq}
\sum_{\alpha=1}^{n+p} \Delta y^{\alpha}\left\langle\nabla y^{\alpha}, \nabla u_{i}\right\rangle_{g} =\sum_{\alpha=1}^{n+p}\left\langle\Delta y^{\alpha}\nabla y^{\alpha}, \nabla u_{i}\right\rangle_{g}=0,
\end{equation}
and

\begin{equation}\label{xx-v}
\sum_{\alpha=1}^{n+p} \Delta y^{\alpha}\left\langle\nabla y^{\alpha}, \nu\right\rangle_{g_{0}}=\sum_{\alpha=1}^{n+p}\left\langle\Delta y^
{\alpha}\nabla y^{\alpha}, \nu\right\rangle_{g_{0}}=0.
\end{equation}
From \eqref{uv-ineq} and  \eqref{xx0-ineq}, we obtain

\begin{equation}\begin{aligned}\label{vxx-u}
\sum_{\alpha=1}^{n+p} \mathfrak{L}_{\nu} y^{\alpha}\left\langle\nabla y^
{\alpha}, \nabla u_{i}\right\rangle_{g}=\sum_{\alpha=1}^{n+p}\left(\Delta y^{\alpha}+\left\langle\nabla y^
{\alpha}, \nu\right\rangle_{g_{0}}\right)\left\langle\nabla y^{\alpha}, \nabla u_{i}\right\rangle_{g}\leq|\nabla u_{1}|_{g}|\nu^{\top}|_{g_{0}}.
\end{aligned}\end{equation}

Let $y^{1}, y^{2}, \ldots, y^{n+p}$ be the standard coordinate functions of $\mathbb{R}^{n+p}$ and
define an $((n+p) \times (n+p))$-matrix $D$ by
$D:=\left(d_{\alpha \beta}\right),$where
$d_{\alpha \beta}=\int_{\Omega} y^{\alpha} u_{1} u_{\beta+1} .$ Using the orthogonalization of Gram and Schmidt, we know that there exist an upper triangle matrix $R=\left(R_{\alpha \beta}\right)$ and an orthogonal matrix $Q=\left(\tau_{\alpha \beta}\right)$ such that $$R=QD,$$ i.e.,

\begin{equation*}\begin{aligned}
R_{\alpha \beta}=\sum_{\gamma=1}^{n+p} \tau_{\alpha \gamma} d_{\gamma \beta}=\int_{\Omega} \sum_{\gamma=1}^{n+p} \tau_{\alpha \gamma} y^{\gamma} u_{1} u_{\beta+1}=0,
\end{aligned}\end{equation*}
for $1 \leq \beta<\alpha \leq n+p$. Defining \begin{equation}\label{h-a}h_{\alpha}=\sum_{\gamma=1}^{n+p} \tau_{\alpha \gamma} y^{\gamma},\end{equation} we have $$\int_{\Omega} h_{\alpha} u_{1} u_{\beta+1}=0,$$ where $1 \leq \beta<\alpha \leq n+p .$
Since $Q$ is an orthogonal matrix, by lemma \ref{lem3.1} and lemma \ref{lem3.3}, we have the following lemma.

\begin{lem}\label{lem3.4}Under the above convention, we have
\begin{equation}\label{n-2}
\sum_{\alpha=1}^{n+p}\left|\nabla h
_{\alpha}\right|_{g}^{2}=n,\end{equation}

\begin{equation}\label{de-h}\sum_{\alpha=1}^{n+p}\left(\Delta h_{\alpha}\right)^{2}=n^{2}H^{2},\end{equation}

\begin{equation}\label{hhu} \sum_{\alpha=1}^{n+p} \Delta h_{\alpha}\left\langle\nabla h_{\alpha}, \nabla u_{1}\right\rangle_{g}=0,
\end{equation}

\begin{equation}\label{hhv}
\sum_{\alpha=1}^{n+p} \Delta h_{\alpha}\left\langle\nabla h_{\alpha}, \nu\right\rangle_{g_{0}}=0,\end{equation}

\begin{equation}\label{huhv} \sum_{\alpha=1}^{n+p}\left\langle\nabla h_{\alpha}, \nabla u_{1}\right\rangle_{g}\left\langle\nabla h_{\alpha}, \nu\right\rangle_{g_{0}}\leq|\nabla u_{1}|_{g}|\nu^{\top}|_{g_{0}}, \end{equation}

\begin{equation}\label{vv2} \sum_{\alpha=1}^{n+p}\left\langle\nabla h_{\alpha}, \nu\right\rangle_{g_{0}}^{2}=\left|\nu^{\top}\right|_{g_{0}}^{2}, \end{equation}
and

\begin{equation}\label{nab-u-2} \sum_{\alpha=1}^{n+p}\left\langle\nabla h_{\alpha}, \nabla u_{1}\right\rangle_{g}^{2}=\left|\nabla u_{1}\right|_{g}^{2}. \end{equation}

\end{lem}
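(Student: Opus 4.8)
The plan is to verify Lemma \ref{lem3.4} by transporting each of the seven identities/inequalities of Lemma \ref{lem3.1} and Lemma \ref{lem3.3} through the orthogonal change of basis $h_\alpha=\sum_\gamma \tau_{\alpha\gamma}y^\gamma$. The crucial algebraic fact is that if $Q=(\tau_{\alpha\beta})$ is orthogonal, then $\sum_\alpha \tau_{\alpha\gamma}\tau_{\alpha\delta}=\delta_{\gamma\delta}$, and consequently for any family of vectors (or scalars) $v_\gamma$ the quadratic form $\sum_\alpha\big(\sum_\gamma\tau_{\alpha\gamma}v_\gamma\big)\big(\sum_\delta\tau_{\alpha\delta}v_\delta\big)$ collapses to $\sum_\gamma v_\gamma\!\cdot\!v_\gamma$. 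So essentially every sum over $\alpha$ of a \emph{bilinear} expression in the $h_\alpha$ equals the corresponding sum in the $y^\alpha$, and the right-hand sides are unchanged.

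Concretely I would proceed identity by identity. First, since $\nabla h_\alpha=\sum_\gamma\tau_{\alpha\gamma}\nabla y^\gamma$, one has $\sum_\alpha|\nabla h_\alpha|_g^2=\sum_{\gamma,\delta}\big(\sum_\alpha\tau_{\alpha\gamma}\tau_{\alpha\delta}\big)\langle\nabla y^\gamma,\nabla y^\delta\rangle_g=\sum_\gamma|\nabla y^\gamma|_g^2=n$ by \eqref{n-ine}, giving \eqref{n-2}. Exactly the same orthogonality collapse applied to $\Delta h_\alpha=\sum_\gamma\tau_{\alpha\gamma}\Delta y^\gamma$ together with \eqref{nH-ine} yields $\sum_\alpha(\Delta h_\alpha)^2=\sum_\gamma(\Delta y^\gamma)^2=n^2H^2$, which is \eqref{de-h}. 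For \eqref{hhu} and \eqref{hhv} I would expand $\sum_\alpha\Delta h_\alpha\langle\nabla h_\alpha,W\rangle$ (with $W=\nabla u_1$ or $W=\nu$) as $\sum_{\gamma,\delta}\big(\sum_\alpha\tau_{\alpha\gamma}\tau_{\alpha\delta}\big)\Delta y^\gamma\langle\nabla y^\delta,W\rangle=\sum_\gamma\Delta y^\gamma\langle\nabla y^\gamma,W\rangle$, which vanishes by \eqref{xx0-ineq} and \eqref{xx-v} respectively. Identity \eqref{vv2} follows the same way from \eqref{v-2}, and \eqref{nab-u-2} from \eqref{na-ui-2} (equivalently \eqref{uw-ine} with $u=w=u_1$).

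The one genuinely different case is the inequality \eqref{huhv}, since it is not bilinear of the collapsing type — rather, after the orthogonal substitution the left side becomes $\sum_\gamma\langle\nabla y^\gamma,\nabla u_1\rangle_g\langle\nabla y^\gamma,\nu\rangle_{g_0}$, which is precisely the left-hand side of \eqref{uv-ineq} in Lemma \ref{lem3.3}; hence it is bounded by $|\nabla u_1|_g|\nu^\top|_{g_0}$ directly. So the only subtlety I anticipate is bookkeeping: one must note that the two inner products $\langle\cdot,\cdot\rangle_g$ and $\langle\cdot,\cdot\rangle_{g_0}$ play different roles but that the orthogonal-matrix collapse works inside the sum over $\alpha$ regardless of which metric the resulting pairing uses, because the $\tau_{\alpha\gamma}$ are scalars pulled out of both. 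There is no real obstacle here — the lemma is a routine invariance-under-orthogonal-change-of-frame statement — but the cleanest write-up is to state the collapse identity $\sum_\alpha\tau_{\alpha\gamma}\tau_{\alpha\delta}=\delta_{\gamma\delta}$ once and then invoke it six times, treating \eqref{huhv} separately via Lemma \ref{lem3.3} as just indicated.
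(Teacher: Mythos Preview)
Your proposal is correct and is exactly the approach the paper takes: the paper simply states that the lemma follows from Lemmas \ref{lem3.1}, \ref{lem3.2} and \ref{lem3.3} together with the orthogonality of $Q$, without writing out the details. Your explicit use of $\sum_\alpha \tau_{\alpha\gamma}\tau_{\alpha\delta}=\delta_{\gamma\delta}$ to collapse each bilinear sum (and the separate invocation of Lemma \ref{lem3.3} for \eqref{huhv}) is precisely the intended argument, spelled out more fully than in the paper.
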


\section{Proofs of Main Results}\label{sec4}

In this section, we would like to give the proofs of the main results.

From \eqref{de-h}, \eqref{hhv} and \eqref{vv2}, we obtain

\begin{equation}\label{nhv} \sum_{\alpha=1}^{n+p}\left( \mathfrak{L}_{\nu} h_{\alpha}\right)^{2}= n^{2}H^{2}+|\nu^{\top}|_{g_{0}}^{2}.\end{equation}
Utilizing \eqref{hhu} and \eqref{huhv}, one has

\begin{equation}\label{nab-u-nu}\sum_{\alpha=1}^{n+p} \mathfrak{L}_{\nu} h_{\alpha}\left\langle\nabla h_{\alpha}, \nabla u_{1}\right\rangle_{g}\leq|\nabla u_{1}|_{g}|\nu^{\top}|_{g_{0}}.
\end{equation}

\begin{lem} For any $i=1,2,\cdots k$ and $\alpha=1,2,\cdots,n+p$, let

\begin{equation}\label{widehat-upsilon}\widehat{\Upsilon}=\sum^{n+p}_{\alpha=1}\int_{\Omega}\Upsilon(h_{\alpha})e^{\langle\nu,X\rangle_{g_{0}}}dv,\end{equation}where function $\Upsilon$ is given by \eqref{Upsilon} and $h_{\alpha}$ is given by \eqref{h-a}. Then, we have

\begin{equation}\begin{aligned}\label{Upsilon-31}\widehat{\Upsilon}&\leq\int_{\Omega}\left[4\left|\nabla u_{1}\right|_{g}^{2}+u_{1}^{2}\left(n^{2}H^{2}+|\nu^{\top}|_{g_{0}}^{2}\right)\right]e^{\langle\nu,X\rangle_{g_{0}}}dv \\&\ \ \ \ \ \ + 4\left(\int_{\Omega}u_{1}^{2}|\nu^{\top}|_{g_{0}}^{2} e^{\langle\nu,X\rangle_{g_{0}}}dv\right)^{\frac{1}{2}}\left(\int_{\Omega}|\nabla u_{1}|_{g}^{2} e^{\langle\nu,X\rangle_{g_{0}}}dv\right)^{\frac{1}{2}},\end{aligned}\end{equation}
and

\begin{equation}\begin{aligned}\label{Upsilon-31-1}\widehat{\Upsilon}\leq\int_{\Omega}\left[6\left|\nabla u_{1}\right|_{g}^{2}+u_{1}^{2}\left(\left(n^{2}H^{2}+3|\nu^{\top}|_{g_{0}}^{2}\right)\right)\right]e^{\langle\nu,X\rangle_{g_{0}}}dv.\end{aligned}\end{equation}
\end{lem}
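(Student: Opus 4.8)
The plan is to start from the definition of $\widehat{\Upsilon}$ in \eqref{widehat-upsilon} together with the pointwise formula \eqref{Upsilon} for $\Upsilon(h_\alpha)$, expand the square, and sum over $\alpha=1,\dots,n+p$. Concretely, $\Upsilon(h_\alpha)=\bigl(u_1\mathfrak{L}_\nu h_\alpha+2\langle\nabla h_\alpha,\nabla u_1\rangle_g\bigr)^2$, so expanding gives three kinds of terms:
\begin{equation*}
\sum_{\alpha}\Upsilon(h_\alpha)=u_1^2\sum_\alpha(\mathfrak{L}_\nu h_\alpha)^2+4u_1\sum_\alpha\mathfrak{L}_\nu h_\alpha\langle\nabla h_\alpha,\nabla u_1\rangle_g+4\sum_\alpha\langle\nabla h_\alpha,\nabla u_1\rangle_g^2.
\end{equation*}
Then I would substitute the three identities/inequalities from Lemma \ref{lem3.4} and its consequences: $\sum_\alpha(\mathfrak{L}_\nu h_\alpha)^2=n^2H^2+|\nu^\top|_{g_0}^2$ from \eqref{nhv}, $\sum_\alpha\mathfrak{L}_\nu h_\alpha\langle\nabla h_\alpha,\nabla u_1\rangle_g\le|\nabla u_1|_g|\nu^\top|_{g_0}$ from \eqref{nab-u-nu}, and $\sum_\alpha\langle\nabla h_\alpha,\nabla u_1\rangle_g^2=|\nabla u_1|_g^2$ from \eqref{nab-u-2}. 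This yields the pointwise bound
\begin{equation*}
\sum_{\alpha}\Upsilon(h_\alpha)\le u_1^2(n^2H^2+|\nu^\top|_{g_0}^2)+4u_1|\nabla u_1|_g|\nu^\top|_{g_0}+4|\nabla u_1|_g^2.
\end{equation*}

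For the first inequality \eqref{Upsilon-31}, I would integrate this against $e^{\langle\nu,X\rangle_{g_0}}dv$ and handle only the cross term $4\int_\Omega u_1|\nabla u_1|_g|\nu^\top|_{g_0}\,e^{\langle\nu,X\rangle_{g_0}}dv$; applying the Cauchy–Schwarz inequality to this integral, writing the integrand as $\bigl(u_1|\nu^\top|_{g_0}\bigr)\bigl(|\nabla u_1|_g\bigr)$, gives exactly the product of square-roots appearing on the right-hand side of \eqref{Upsilon-31}. The other two terms integrate directly to the stated bracketed expression. For the second inequality \eqref{Upsilon-31-1}, instead of Cauchy–Schwarz on the integrals, I would bound the cross term pointwise by Young's inequality $4u_1|\nabla u_1|_g|\nu^\top|_{g_0}\le 2|\nabla u_1|_g^2+2u_1^2|\nu^\top|_{g_0}^2$, and then regroup: $4|\nabla u_1|_g^2+2|\nabla u_1|_g^2=6|\nabla u_1|_g^2$ and $u_1^2(n^2H^2+|\nu^\top|_{g_0}^2)+2u_1^2|\nu^\top|_{g_0}^2=u_1^2(n^2H^2+3|\nu^\top|_{g_0}^2)$, after which integration gives \eqref{Upsilon-31-1} directly.

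There is essentially no serious obstacle here — the lemma is a bookkeeping consequence of the algebraic identities already assembled in Lemma \ref{lem3.4}. The only mild subtlety is choosing the right way to estimate the cross term: a Cauchy–Schwarz at the level of the integrals to get the "sharp" form \eqref{Upsilon-31} (which will feed into Theorem \ref{thm1.1} via the constant $\widetilde{C}_1$), versus a pointwise Young's inequality with the specific weights $1$ and $1$ to get the cleaner companion form \eqref{Upsilon-31-1} (which feeds into Corollary \ref{corr1.2} via $C_2$). Both require care that every term is paired against $e^{\langle\nu,X\rangle_{g_0}}dv$ consistently, but nothing beyond routine expansion is needed.
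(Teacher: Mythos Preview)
Your proposal is correct and follows essentially the same approach as the paper: expand the square in $\Upsilon(h_\alpha)$, sum over $\alpha$ using the identities \eqref{nhv}, \eqref{nab-u-nu}, \eqref{nab-u-2}, then estimate the cross term by Cauchy--Schwarz on the integrals for \eqref{Upsilon-31} and by the pointwise arithmetic--geometric (Young) inequality for \eqref{Upsilon-31-1}. The only cosmetic point is that the cross term should carry $|u_1|$ rather than $u_1$ (since \eqref{nab-u-nu} is one-sided), but this is harmless because both subsequent estimates involve only $u_1^2$.
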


\begin{proof}  By \eqref{h-a} and \eqref{widehat-upsilon}, we have

\begin{equation}\begin{aligned}\widehat{\Upsilon}&=\sum^{n+p}_{\alpha=1}\int_{\Omega}\left(u_{1} \mathfrak{L}_{\nu} h_{\alpha}+2\langle\nabla h_{\alpha}, \nabla u_{1}\rangle_{g}\right)^{2}e^{\langle\nu,X\rangle_{g_{0}}}dv\\&=\int_{\Omega}\sum^{n+p}_{\alpha=1}\left(u^{2}_{1} (\mathfrak{L}_{\nu} h_{\alpha})^{2}+4u_{1}\mathfrak{L}_{\nu} h_{\alpha} \langle\nabla h_{\alpha},\nabla u_{1}\rangle_{g}+4 \langle\nabla h_{\alpha}, \nabla u_{1}\rangle_{g}^{2}\right)e^{\langle\nu,X\rangle_{g_{0}}}dv.\end{aligned}\end{equation}
From \eqref{hhu}, \eqref{nab-u-2}, \eqref{nhv} and \eqref{nab-u-nu}, we infer that

\begin{equation}\begin{aligned}\label{ineq-Up-4.17}\widehat{\Upsilon}\leq\int_{\Omega}\left[u^{2}_{1} (n^{2}H^{2}+|\nu^{\top}|_{g_{0}}^{2})+4|\nabla u_{1}|_{g}^{2}\right]e^{\langle\nu,X\rangle_{g_{0}}}dv+4\int_{\Omega}(u_{1}|\nu^{\top}|_{g_{0}})|\nabla u_{1}|_{g}e^{\langle\nu,X\rangle_{g_{0}}}dv.\end{aligned}\end{equation}
Furthermore, by Cauchy-Schwarz inequality, we have

\begin{equation}\begin{aligned}\label{cau}4\int_{\Omega}(u_{1}|\nu^{\top}&|_{g_{0}})|\nabla u_{1}|_{g}e^{\langle\nu,X\rangle_{g_{0}}}dv\\&\leq4\left(\int_{\Omega}(u_{1}|\nu^{\top}|_{g_{0}})^{2} e^{\langle\nu,X\rangle_{g_{0}}}dv\right)^{\frac{1}{2}}\left(\int_{\Omega}|\nabla u_{1}|_{g}^{2} e^{\langle\nu,X\rangle_{g_{0}}}dv\right)^{\frac{1}{2}}.\end{aligned}\end{equation}
 From \eqref{ineq-Up-4.17} and \eqref{cau}, we yield

\begin{equation*}\begin{aligned}\widehat{\Upsilon}&\leq\int_{\Omega}\left[4\left|\nabla u_{1}\right|_{g}^{2}+u_{1}^{2}\left(n^{2}H^{2}+|\nu^{\top}|_{g_{0}}^{2}\right)\right]e^{\langle\nu,X\rangle_{g_{0}}}dv \\&\ \ \ \ \ \ + 4\left(\int_{\Omega}(u_{1}|\nu^{\top}|_{g_{0}})^{2} e^{\langle\nu,X\rangle_{g_{0}}}dv\right)^{\frac{1}{2}}\left(\int_{\Omega}|\nabla u_{1}|_{g}^{2} e^{\langle\nu,X\rangle_{g_{0}}}dv\right)^{\frac{1}{2}}.\end{aligned}\end{equation*} By mean inequality, we obtain

\begin{equation}\label{mean}4\int_{\Omega}(u_{1}|\nu^{\top}|_{g_{0}})|\nabla u_{1}|_{g}e^{\langle\nu,X\rangle_{g_{0}}}dv\leq2\int_{\Omega}(u_{1}|\nu^{\top}|_{g_{0}})^{2} e^{\langle\nu,X\rangle_{g_{0}}}dv+2\int_{\Omega}|\nabla u_{1}|_{g}^{2} e^{\langle\nu,X\rangle_{g_{0}}}dv.\end{equation}
Therefore, by \eqref{ineq-Up-4.17} and \eqref{mean}, we derive that

\begin{equation*}\begin{aligned}\widehat{\Upsilon}&\leq\int_{\Omega}4\left|\nabla u_{1}\right|_{g}^{2}+u_{1}^{2}\left(n^{2}H^{2}+|\nu^{\top}|_{g_{0}}^{2}\right)e^{\langle\nu,X\rangle_{g_{0}}}dv \\&\ \ \ \ \ \ + 2\int_{\Omega}(u_{1}|\nu^{\top}|_{g_{0}})^{2} e^{\langle\nu,X\rangle_{g_{0}}}dv+2\int_{\Omega}|\nabla u_{1}|_{g}^{2} e^{\langle\nu,X\rangle_{g_{0}}}dv,\end{aligned}\end{equation*}which gives \eqref{Upsilon-31-1}. Therefore, we finish the proof of this lemma.

\end{proof}

By \eqref{n-2},  we have the following lemma.
\begin{lem} For any $i=1,2,\cdots k$ and $\alpha=1,2,\cdots,n+p$, let

\begin{equation*}\widehat{\Phi}=\sum^{n+p}_{\alpha=1}\int_{\Omega}\Phi(h_{\alpha})e^{\langle\nu,X\rangle_{g_{0}}
}dv,\end{equation*}where function $\Phi$ is given by \eqref{Phi}  and $h_{\alpha}$ is given by \eqref{h-a}. Then, we have

\begin{equation}\label{Phi-31}\widehat{\Phi}=n\int_{\Omega}u_{1} \mathfrak{L}_{\nu} u_{1}e^{\langle\nu,X\rangle_{g_{0}}}dv.\end{equation}

\end{lem}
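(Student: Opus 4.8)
\textbf{Proof proposal for the final lemma (the formula $\widehat{\Phi}=n\int_{\Omega}u_{1}\mathfrak{L}_{\nu}u_{1}e^{\langle\nu,X\rangle_{g_{0}}}dv$).}
The plan is to unwind the definition of $\widehat{\Phi}$, interchange the sum over $\alpha$ with the integral, and then recognize that the resulting sum over $\alpha$ of $|\nabla h_{\alpha}|_{g}^{2}$ is exactly the left-hand side of \eqref{n-2}. First I would write
\begin{equation*}
\widehat{\Phi}=\sum_{\alpha=1}^{n+p}\int_{\Omega}\Phi(h_{\alpha})e^{\langle\nu,X\rangle_{g_{0}}}dv
=\sum_{\alpha=1}^{n+p}\int_{\Omega}|\nabla h_{\alpha}|_{g}^{2}\,u_{1}\mathfrak{L}_{\nu}u_{1}\,e^{\langle\nu,X\rangle_{g_{0}}}dv,
\end{equation*}
using the expression \eqref{Phi} for $\Phi(\phi_{i})$ with $\phi_{i}=h_{\alpha}$. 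Since the sum is finite and every term is integrable, I may pull the summation inside the integral:
\begin{equation*}
\widehat{\Phi}=\int_{\Omega}\left(\sum_{\alpha=1}^{n+p}|\nabla h_{\alpha}|_{g}^{2}\right)u_{1}\mathfrak{L}_{\nu}u_{1}\,e^{\langle\nu,X\rangle_{g_{0}}}dv.
\end{equation*}

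Next I would invoke \eqref{n-2} from Lemma \ref{lem3.4}, which states $\sum_{\alpha=1}^{n+p}|\nabla h_{\alpha}|_{g}^{2}=n$ pointwise on $\mathcal{M}^{n}$ (this is the content guaranteed by the fact that $Q$ is orthogonal together with \eqref{n-ine}). Substituting this constant value into the integrand yields
\begin{equation*}
\widehat{\Phi}=\int_{\Omega}n\,u_{1}\mathfrak{L}_{\nu}u_{1}\,e^{\langle\nu,X\rangle_{g_{0}}}dv=n\int_{\Omega}u_{1}\mathfrak{L}_{\nu}u_{1}\,e^{\langle\nu,X\rangle_{g_{0}}}dv,
\end{equation*}
which is precisely \eqref{Phi-31}.

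This argument is essentially a one-line computation once \eqref{n-2} is in hand, so there is no genuine obstacle; the only point requiring a word of care is the justification for exchanging the finite sum and the integral, which is immediate since $h_{\alpha}\in C^{\infty}$, $\Omega$ is bounded, and $u_{1}\in C^{4}(\Omega)\cap C^{3}(\partial\Omega)$, making each integrand bounded on $\overline{\Omega}$. I would therefore present the proof compactly, emphasizing the appeal to Lemma \ref{lem3.4} and noting that the same bookkeeping (orthogonal invariance of the relevant quadratic expressions) is what produced the companion identities \eqref{de-h}--\eqref{nab-u-2} used in the preceding lemma.
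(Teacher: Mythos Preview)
Your proof is correct and follows exactly the paper's approach: the paper simply states that the lemma follows from \eqref{n-2}, and your argument spells out this one-line computation by substituting $\Phi(h_{\alpha})=|\nabla h_{\alpha}|_{g}^{2}u_{1}\mathfrak{L}_{\nu}u_{1}$ and applying $\sum_{\alpha}|\nabla h_{\alpha}|_{g}^{2}=n$.
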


Now, we give the proof of theorem \ref{thm1.1}.

\vskip 3mm
\noindent \emph{Proof of theorem} \ref{thm1.1}. From \eqref{lem-2.2}, noticing the definitions of $\widehat{\Phi}$ and $\widehat{\Upsilon}$,  we have

\begin{equation}\begin{aligned}\label{Sum-thm1.2}
\sum^{n+p}_{\alpha=1}\left(\Lambda_{\alpha+1}-\Lambda_{1}\right)^{\frac{1}{2}} \int_{\Omega}\left|u_{1} \nabla h_{\alpha}\right|_{g}^{2}e^{\langle\nu,X\rangle_{g_{0}}}dv
\leq\left(\frac{\delta}{2}+\frac{1}{2 \delta}\right)\widehat{\Upsilon} -\delta\widehat{\Phi}.
\end{aligned}\end{equation}By divergence theorem and Cauchy-Schwarz inequality, we conclude that

\begin{equation*}
 \int_{\Omega}\left|\nabla u_{1}\right|_{g}^{2}e^{\langle\nu,X\rangle_{g_{0}}}dv \leq \Lambda_{1}^{\frac{1}{2}},
\end{equation*}which gives
\begin{equation}\label{wide-Phi}
\widehat{\Phi}\geq- n \Lambda_{1}^{\frac{1}{2}}.
\end{equation}
Since eigenvalues are invariant under isometries, letting

\begin{equation*}
C_{1}=\frac{1}{4}\inf _{\sigma \in \Pi}\max_{\Omega}\left(n^{2}H^{2}\right),
\end{equation*}and

$$\widetilde{C}_{1}=\frac{1}{4}\max_{\Omega} |\nu^{\top}|_{g_{0}} ,$$
where $\Pi$ denotes the set of all isometric immersions from $\mathcal{M}^{n}$ into the  Euclidean space $\mathbb{R}^{n+p}$, by inequality \eqref{wide-Phi}, we infer that

\begin{equation}\begin{aligned}\label{wide-Upsilon}\widehat{\Upsilon}&\leq\int_{\Omega}\left[4\left|\nabla u_{1}\right|_{g}^{2}+u_{1}^{2}\left(n^{2}H
^{2}+|\nu^{\top}|_{g_{0}}^{2}\right)\right] e^{\langle\nu,X\rangle_{g_{0}}}dv\\&\quad+4\Lambda^{\frac{1}{4}}_{1}\left(\int_{\Omega}\left(u_{1}|\nu^{\top}|_{g_{0}}\right)^{2} e^{\langle\nu,X\rangle_{g_{0}}}dv\right)^{\frac{1}{2}}
\\&\leq 4 \left(\Lambda_{1}^{\frac{1}{2}}+4\widetilde{C}_{1}\Lambda_{1}^{\frac{1}{4}}+4\widetilde{C}_{1}^{2}+C_{1}\right).
\end{aligned}\end{equation}
For $\forall x \in \mathcal{M}^{n},$ by a transformation of orthonormal frame if necessary, it is not difficult to prove that, for any $\alpha$,

\begin{equation}\label{nab-h-alp}
\left|\nabla h_{\alpha}\right|_{g}^{2} \leq 1,
\end{equation} where $\alpha=1,2,\cdots,n+p$. It is clear that

\begin{equation}\begin{aligned}\label{Sum-3.21}
&\sum_{\alpha=1}^{n+p}\left(\Lambda_{\alpha+1}-\Lambda_{1}\right)^{\frac{1}{2}}\int_{\Omega}\left|u_{1}\nabla h_{\alpha}\right|_{g}^{2}e^{\langle\nu,X\rangle_{g_{0}}}dv\\ &
\geq \sum_{i=1}^{n}\left(\Lambda_{i+1}-\Lambda_{1}\right)^{\frac{1}{2}}\int_{\Omega}\left|u_{1}\nabla h_{i}\right|^{2}_{g}e^{\langle\nu,X\rangle_{g_{0}}}dv\\&\quad\quad+\left(\Lambda_{n+1}-\Lambda_{1}\right)^{\frac{1}{2}} \sum_{j=n+1}^{n+p}\int_{\Omega}\left|u_{1}\nabla h_{j}\right|_{g}^{2}e^{\langle\nu,X\rangle_{g_{0}}}dv.
\end{aligned}\end{equation}
Hence, from \eqref{n-2}, \eqref{nab-h-alp} and \eqref{Sum-3.21}, we infer that,

\begin{equation*}\begin{aligned}
&\sum_{\alpha=1}^{n+p}\left(\Lambda_{\alpha+1}-\Lambda_{1}\right)^{\frac{1}{2}}\int_{\Omega}\left|u_{1}\nabla h_{\alpha}\right|_{g}^{2}e^{\langle\nu,X\rangle_{g_{0}}}dv\\ &
\geq\sum_{i=1}^{n}\left(\Lambda_{i+1}-\Lambda_{1}\right)^{\frac{1}{2}}\int_{\Omega}\left|u_{1}\nabla h_{i}\right|^{2}_{g}e^{\langle\nu,X\rangle_{g_{0}}}dv\\&\quad\quad+\left(\Lambda_{n+1}-\Lambda_{1}\right)^{\frac{1}{2}}\left(n-\sum_{j=1}^{n}\int_{\Omega}\left|u_{1}\nabla h_{j}\right|_{g}^{2}\right)e^{\langle\nu,X\rangle_{g_{0}}}dv \\&
=\sum_{i=1}^{n}\left(\Lambda_{i+1}-\Lambda_{1}\right)^{\frac{1}{2}}\int_{\Omega}\left|u_{1}\nabla h_{i}\right|^{2}_{g}e^{\langle\nu,X\rangle_{g_{0}}}dv\\&\quad\quad+\left(\Lambda_{n+1}-\Lambda_{1}\right)^{\frac{1}{2}}\sum_{j=1}^{n}\left(1-\int_{\Omega}\left|u_{1}\nabla h_{j}\right|_{g}^{2}\right)e^{\langle\nu,X\rangle_{g_{0}}}dv \\&
\geq \sum_{i=1}^{n}\int_{\Omega}\left(\Lambda_{i+1}-\Lambda_{1}\right)^{\frac{1}{2}}\int_{\Omega}\left|u_{1}\nabla h_{i}\right|^{2}_{g}e^{\langle\nu,X\rangle_{g_{0}}}dv\\&\quad\quad+\sum_{j=1}^{n}\left(\Lambda_{j+1}-\Lambda_{1}\right)^{\frac{1}{2}}\int_{\Omega}\left(u_{1}^{2}-\left|u_{1}\nabla h_{j}\right|_{g}^{2}\right)e^{\langle\nu,X\rangle_{g_{0}}}dv,
\end{aligned}\end{equation*}which implies that

\begin{equation}\begin{aligned}\label{Sum-3.2}
\sum_{\alpha=1}^{n+p}\left(\Lambda_{\alpha+1}-\Lambda_{1}\right)^{\frac{1}{2}}\int_{\Omega}\left|u_{1}\nabla h_{\alpha}\right|_{g}^{2}e^{\langle\nu,X\rangle_{g_{0}}}dv
\geq\sum_{j=1}^{n}\left(\Lambda_{j+1}-\Lambda_{1}\right)^{\frac{1}{2}}.
\end{aligned}\end{equation}
Using \eqref{Sum-thm1.2}, \eqref{wide-Phi}, \eqref{wide-Upsilon} and \eqref{Sum-3.2},  we have

$$
\sum_{j=1}^{n}\left(\Lambda_{j+1}-\Lambda_{1}\right)^{\frac{1}{2}} \leq 4\left(\frac{\delta}{2}+\frac{1}{2 \delta}\right)\left( \Lambda_{1}^{\frac{1}{2}}+4\widetilde{C}_{1}\Lambda_{1}^{\frac{1}{4}}+4\widetilde{C}_{1}^{2}+C_{1}\right)+n \delta \Lambda_{1}^{\frac{1}{2}}.
$$
Taking
$$
\delta=\frac{\sqrt{ \Lambda_{1}^{\frac{1}{2}}+4\widetilde{C}_{1}\Lambda_{1}^{\frac{1}{4}}+4\widetilde{C}_{1}^{2}+C_{1}}}{\sqrt{\left(\frac{n}{2}+1\right) \Lambda_{1}^{\frac{1}{2}}+4\widetilde{C}_{1}\Lambda_{1}^{\frac{1}{4}}+4\widetilde{C}_{1}^{2}+C_{1}}},
$$
we get \eqref{z-ineq-1}. Therefore, it completes the proof of theorem \ref{thm1.1}.

$$\eqno\Box$$

According to theorem \ref{thm1.1}, we would like to give the proof of corollary \ref{corr1.1}.

\vskip 3mm
\noindent\emph{Proof of Corollary} \ref{corr1.1} Since

\begin{equation*}\begin{aligned}&4\left\{\left( \Lambda_{1}^{\frac{1}{2}}+4\widetilde{C}_{1}\Lambda_{1}^{\frac{1}{4}}+4\widetilde{C}_{1}^{2}+C_{1}\right)\left[\left(\frac{n}{2}+1\right) \Lambda_{1}^{\frac{1}{2}}+4\widetilde{C}_{1}\Lambda_{1}^{\frac{1}{4}}+4\widetilde{C}_{1}^{2}+C_{1}\right]\right\}^{\frac{1}{2}}\\&\leq n \Lambda_{1}^{\frac{1}{2}}+4\left( \Lambda_{1}^{\frac{1}{2}}+4\widetilde{C}_{1}\Lambda_{1}^{\frac{1}{4}}+4\widetilde{C}_{1}^{2}+C_{1}\right),\end{aligned}\end{equation*}where $C_{1}$ is given by

$$
C_{1}=\frac{1}{4}\inf _{\sigma \in \Pi}\max_{\Omega}\left(n^{2}H^{2}\right),
$$and $\widetilde{C}_{1}$ is given by $$\widetilde{C}_{1}=\frac{1}{4}\max_{\Omega} |\nu^{\top}|_{g_{0}} ,$$
from \eqref{z-ineq-1}, we then obtain
\begin{equation*}
\sum_{i=1}^{n}\left\{\left(\Lambda_{i+1}-\Lambda_{1}\right)^{\frac{1}{2}}-\Lambda_{1}^{\frac{1}{2}}\right\} \leq 4\left(\Lambda_{1}^{\frac{1}{2}}+4\widetilde{C}_{1}\Lambda_{1}^{\frac{1}{4}}+4\widetilde{C}_{1}^{2}+C_{1}\right).
\end{equation*}
This finishes the proof of corollary \ref{corr1.1}.

$$\eqno\Box$$

\vskip 3mm
\noindent \emph{Proof of corollary} \ref{corr1.2}. From \eqref{Upsilon-31-1}, we have

\begin{equation}\begin{aligned}\label{Upsilon-31-12}\widehat{\Upsilon}&\leq\int_{\Omega}\left[6\left|\nabla u_{1}\right|_{g}^{2}+u_{1}^{2}\left(n^{2}H^{2}+3|\nu^{\top}|_{g_{0}}^{2}\right)\right]e^{\langle\nu,X\rangle_{g_{0}}}dv.\end{aligned}\end{equation}
According to \eqref{lem-2.2} and the definitions of $\widehat{\Phi}$ and $\widehat{\Upsilon}$,  we derive that

\begin{equation}\begin{aligned}\label{Sum-thm1.2-1}
\sum^{n+p}_{\alpha=1}\left(\Lambda_{\alpha+1}-\Lambda_{1}\right)^{\frac{1}{2}} \int_{\Omega}\left|u_{1} \nabla h_{\alpha}\right|_{g}^{2}e^{\langle\nu,X\rangle_{g_{0}}}dv
\leq\left(\frac{\delta}{2}+\frac{1}{2 \delta}\right)\widehat{\Upsilon} -\delta\widehat{\Phi}.
\end{aligned}\end{equation}
Since eigenvalues are invariant under isometries, defining
\begin{equation} \label{C-0}
C_{2}= \frac{1}{6}\inf _{\sigma \in \Pi} \max _{\mathcal{M}^{n}}\left(n^{2}H^{2}+3|\nu^{\top}|_{g_{0}}^{2}\right),
\end{equation}
where $\Pi$ denotes the set of all isometric immersions from $\mathcal{M}^{n}$ into the Euclidean space $\mathbb{R}^{n+p}$, by divergence theorem and Cauchy-Schwarz inequality, we infer that

\begin{equation}\begin{aligned}\label{wide-Upsilon-1}\widehat{\Upsilon}\leq\int_{\Omega}\left[6\left|\nabla u_{1}\right|_{g}^{2}+u_{1}^{2}\left(n^{2}H
^{2}+3|\nu^{\top}|_{g_{0}}^{2}\right)\right]e^{\langle\nu,X\rangle_{g_{0}}}dv
\leq 6 \Lambda_{1}^{\frac{1}{2}}+6C_{2}.
\end{aligned}\end{equation}
Using \eqref{wide-Phi}, \eqref{Sum-3.2}, \eqref{Sum-thm1.2-1} and \eqref{wide-Upsilon-1}, we have

$$
\sum_{j=1}^{n}\left(\Lambda_{j+1}-\Lambda_{1}\right)^{\frac{1}{2}} \leq\left(\frac{\delta}{2}+\frac{1}{2 \delta}\right)\left(6 \Lambda_{1}^{\frac{1}{2}}+6C_{2}\right)+n \delta \Lambda_{1}^{\frac{1}{2}}.
$$
Taking
$$
\delta=\frac{\sqrt{6\Lambda_{1}^{\frac{1}{2}}+6C_{2}}}{\sqrt{(2 n+6) \Lambda_{1}^{\frac{1}{2}}+6C_{2}}},
$$
we have \eqref{z-ineq-1}. Therefore, it completes the proof of  corollary \ref{corr1.2}.

$$\eqno\Box$$

\vskip 3mm
\noindent \emph{Proof of corollary} \ref{corr1.3}. The method of the proof is the same as corollary \ref{corr1.1}. Hence, we omit it.

$$\eqno\Box$$

\section{Eigenvalue Inequalities on the Translating Solitons}\label{sec5}

\vskip5mm

In this section, we would like to discuss the eigenvalues of $\mathcal{L}_{II}^{2}$ on the complete translating solitons.

Firstly, let us consider a smooth family of immersions
$X_{t} = X(\cdot,t):\mathcal{M}^{n}\rightarrow \mathbb{R}^{n+p}$ with corresponding images $\mathcal{M}^{n}_{t} = X_{t}(\mathcal{M}^{n})$ such that the following mean curvature equation system \cite{H}:

\begin{equation}\label{MCF-Equa}{\begin{cases}
&\frac{d}{dt}X(x,t)=\textbf{H}(x,t), x\in \mathcal{M}^{n},\\
& X(\cdot,0) = X(\cdot),
\end{cases}}\end{equation}
is satisfied, where $\textbf{H}(x,t)$ is the mean curvature vector of $\mathcal{M}_{t}^{n}$ at $X(x, t)$ in $\mathbb{R}^{n+p}$.
 We  assume that $\nu_{0}$ is a constant vector with unit length and denote $\nu_{0}^{N}$ the normal projection
of $\nu_{0}$ to the normal bundle of $\mathcal{M}^{n}$ in $\mathbb{R}^{n+p}$. A submanifold $X:\mathcal{M}^{n}\rightarrow\mathbb{R}^{n+p}$ is said to be a translating soliton of the mean curvature flow \eqref{MCF-Equa}, if it
satisfies

\begin{equation}\label{tran} \textbf{H}=\nu_{0}^{N},\end{equation}which is a special solution of the mean curvature flow equations \eqref{MCF-Equa}.
Translating solitons also occur as Type-II singularity of the mean curvature flow equations \eqref{MCF-Equa}, which play an important role in the study
of the mean curvature flow \cite{AV}.
In \cite{Xin2}, Xin studied some basic properties of translating solitons: the volume growth, generalized maximum principle, Gauss maps and certain functions related to the Gauss maps. In addition, he carried out point-wise estimates and integral estimates for the squared norm of the second fundamental form. By utilizing these estimates,  Xin proved some rigidity theorems for translating solitons in the Euclidean space in higher codimension. Recently, Chen and Qiu \cite{ChQ} proved a nonexistence theorem for spacelike translating solitons. These results are established by using a new Omori-Yau maximal principle.

When $\nu_{0}$ is a unit vector field satisfying \eqref{tran},   $\mathfrak{L}_{\nu_{0}}$ exactly is an $\mathfrak{L}_{II}$   operator, which  is introduced by Xin
in  {\rm \cite{Xin2}} and similar to the $\mathfrak{L}$ operator introduced by  Colding and Minicozzi in {\rm \cite{CM}}. Therefore, $\mathfrak{L}_{\nu}$ operator can be viewed as a extension of  $\mathfrak{L}_{II}$ operator. As an application of theorem \ref{thm1.1}, we study the eigenvalues of bi-$\mathfrak{L}_{II}$ operator, which is denoted by $\mathfrak{L}_{II}^{2}$, on the complete translating solitons. In other words, we prove the following theorem.
\begin{thm}\label{thm4.1}
Let $\mathcal{M}^{n}$ be an $n$-dimensional complete translating soliton isometrically embedded into the Euclidean space $\mathbb{R}^{n+p}$ with mean curvature $H$. Then, eigenvalues of clamped plate problem \eqref{L-2-prob}
of the $\mathfrak{L}_{II}^{2}$ operator satisfy
\begin{equation}\begin{aligned}\label{z-ineq-2}
\sum_{i=1}^{n}\left(\Lambda_{i+1}-\Lambda_{1}\right)^{\frac{1}{2}}\leq4\left\{\left( \Lambda_{1}^{\frac{1}{2}}+ \Lambda^{\frac{1}{4}}_{1}+\frac{n^{2}}{4} \right)\left[\left(\frac{n}{2}+1\right) \Lambda_{1}^{\frac{1}{2}} + \Lambda^{\frac{1}{4}}_{1}+\frac{n^{2}}{4}\right]\right\}^{\frac{1}{2}}.
\end{aligned}\end{equation}

\end{thm}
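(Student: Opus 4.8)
The plan is to specialize Theorem \ref{thm1.1} to the translating soliton setting, where the defining equation $\textbf{H}=\nu_0^N$ forces the mean curvature to be controlled by the projection $\nu_0^\top$, thereby collapsing the two constants $C_1$ and $\widetilde C_1$ appearing in \eqref{z-ineq-1} into concrete numbers. First I would recall that for a translating soliton one has $\textbf{H}=\nu_0^N$, so that $n^2H^2 = |\textbf{H}|^2_{g_0} = |\nu_0^N|^2_{g_0}$ pointwise, while $\nu_0$ being a \emph{unit} vector gives the orthogonal decomposition $1 = |\nu_0|^2_{g_0} = |\nu_0^\top|^2_{g_0} + |\nu_0^N|^2_{g_0}$. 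Hence $n^2H^2 = 1 - |\nu_0^\top|^2_{g_0} \le 1$, and likewise $|\nu_0^\top|^2_{g_0}\le 1$. These are the universal pointwise bounds that make the resulting inequality independent of $\mathcal{M}^n$.

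Next I would plug these bounds into the definitions of the constants in Theorem \ref{thm1.1}, taking $\nu=\nu_0$. Since $n^2H^2\le 1$ everywhere on $\mathcal M^n$, we get
\[
C_1 = \frac14\inf_{\sigma\in\Pi}\max_\Omega\bigl(n^2H^2\bigr)\le \frac14,
\]
and since $|\nu_0^\top|_{g_0}\le 1$ everywhere,
\[
\widetilde C_1 = \frac14\max_\Omega|\nu_0^\top|_{g_0}\le \frac14.
\]
Substituting $\widetilde C_1\le\frac14$ and $C_1\le\frac14$ into the expression $\Lambda_1^{1/2}+4\widetilde C_1\Lambda_1^{1/4}+4\widetilde C_1^2+C_1$ that occurs in \eqref{z-ineq-1}, one obtains the bound $\Lambda_1^{1/2}+\Lambda_1^{1/4}+\frac14+\frac14 = \Lambda_1^{1/2}+\Lambda_1^{1/4}+\frac12$. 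Comparing with the target \eqref{z-ineq-2}, whose inner factors read $\Lambda_1^{1/2}+\Lambda_1^{1/4}+\frac{n^2}{4}$, I notice that the stated theorem uses the constant $\frac{n^2}{4}$ in place of $\frac12$; since $\frac12\le\frac{n^2}{4}$ for $n\ge 2$ (and the right side of \eqref{z-ineq-1} is monotone increasing in each of these constants), the sharper bound $\Lambda_1^{1/2}+\Lambda_1^{1/4}+\frac12$ a fortiori implies \eqref{z-ineq-2}. Feeding these into \eqref{z-ineq-1} and using the monotonicity of the map $t\mapsto 4\{(\,t\,)[(\tfrac n2+1)\Lambda_1^{1/2}+t-\Lambda_1^{1/2}]\}^{1/2}$ in $t$ then yields \eqref{z-ineq-2}.

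The only genuinely substantive point — the ``hard part'' — is the geometric input that $n^2H^2\le 1$ on a translating soliton, i.e. verifying $|\textbf{H}|_{g_0}=|\nu_0^N|_{g_0}\le|\nu_0|_{g_0}=1$ from the soliton equation \eqref{tran} together with the Pythagorean splitting of the unit vector $\nu_0$ into tangential and normal parts; everything after that is bookkeeping with the constants and invoking Theorem \ref{thm1.1}. I would also remark that the universality — the absence of any dependence on $\mathcal M^n$, $p$, or $\Omega$ — is precisely a consequence of these two uniform pointwise bounds, which is why every eigenvalue inequality for $\mathfrak{L}_{II}^2$ on translating solitons in this section turns out to be universal.
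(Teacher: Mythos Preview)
Your argument contains a genuine error in the computation of $n^{2}H^{2}$. In the paper's convention (stated in the introduction) one has $H=|\textbf{H}|$, so that $n^{2}H^{2}=n^{2}|\textbf{H}|_{g_{0}}^{2}$, \emph{not} $|\textbf{H}|_{g_{0}}^{2}$ as you write. With the soliton equation $\textbf{H}=\nu_{0}^{N}$ this gives $n^{2}H^{2}=n^{2}|\nu_{0}^{N}|_{g_{0}}^{2}\le n^{2}$, and hence only $C_{1}\le\frac{n^{2}}{4}$, not $C_{1}\le\frac14$. Your conclusion that the inner constant collapses to $\frac12$ (and is therefore sharper than the paper's $\frac{n^{2}}{4}$) is based on this miscount of the normalisation, and in fact the opposite occurs: the dominant contribution comes from $C_{1}$, not from $\widetilde C_{1}$. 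The paper's own computation, $n^{2}H^{2}+|\nu_{0}^{\top}|_{g_{0}}^{2}=n^{2}|\nu_{0}^{\perp}|_{g_{0}}^{2}+|\nu_{0}^{\top}|_{g_{0}}^{2}\le n^{2}$, confirms this.

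There is also a second, smaller issue. Even after correcting to $C_{1}\le\frac{n^{2}}{4}$ and $\widetilde C_{1}\le\frac14$, plugging these bounds into \eqref{z-ineq-1} as a black box yields $4\widetilde C_{1}^{2}+C_{1}\le\frac14+\frac{n^{2}}{4}=\frac{n^{2}+1}{4}$, which is strictly larger than the $\frac{n^{2}}{4}$ appearing in \eqref{z-ineq-2}. To reach the stated constant one must instead return to the estimate of $\widehat{\Upsilon}$ in the proof of Theorem~\ref{thm1.1} and use the \emph{combined} pointwise bound $n^{2}H^{2}+|\nu_{0}^{\top}|_{g_{0}}^{2}\le n^{2}$ directly on the integral $\int_{\Omega}u_{1}^{2}\bigl(n^{2}H^{2}+|\nu_{0}^{\top}|_{g_{0}}^{2}\bigr)e^{\langle\nu_{0},X\rangle_{g_{0}}}dv$; this is exactly what the paper does via \eqref{3.7}--\eqref{3.8}. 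Separately maximising $n^{2}H^{2}$ and $|\nu_{0}^{\top}|^{2}$ loses the cancellation coming from the Pythagorean identity $|\nu_{0}^{\perp}|^{2}+|\nu_{0}^{\top}|^{2}=1$.
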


\begin{proof}
Since $\mathcal{M}^{n}$ is an $n$-dimensional  complete  translator isometrically embedded into the Euclidean space $\mathbb{R}^{n+p}$, we have

\begin{equation}\label{3.6-1}\textbf{H}=\nu_{0}^{\perp}, \end{equation}  and

\begin{equation}\label{3.6-2}|\nu_{0}^{\top}|_{g_{0}}^{2}\leq|\nu_{0}|_{g_{0}}^{2}=1,\end{equation}  which implies that

\begin{equation}\label{3.7}n^{2}H^{2}+|\nu_{0}^{\top}|_{g_{0}}^{2}= n^{2}|\nu_{0}^{\perp}|_{g_{0}}^{2}+|\nu_{0}^{\top}|_{g_{0}}^{2}\leq n^{2}.\end{equation}
Uniting \eqref{3.6-1}, \eqref{3.6-2} and \eqref{3.7}, we yield \begin{equation}\label{3.8}\frac{1}{4}\int_{\Omega}u_{i}^{2}\left(n^{2}H^{2} +|\nu_{0}^{\top}|_{g_{0}}^{2}\right)e^{\langle\nu_{0},X\rangle_{g_{0}}}dv\leq \frac{n^{2}}{4}.\end{equation} Substituting \eqref{3.8} into \eqref{z-ineq-1}, we obtain

\begin{equation*}\begin{aligned}
\sum_{i=1}^{n}\left(\Lambda_{i+1}-\Lambda_{1}\right)^{\frac{1}{2}}\leq4\left\{\left( \Lambda_{1}^{\frac{1}{2}}+\frac{n^{2}}{4} + \Lambda^{\frac{1}{4}}_{1}\right)\left[\left(\frac{n}{2}+1\right) \Lambda_{1}^{\frac{1}{2}}+\frac{n^{2}}{4} + \Lambda^{\frac{1}{4}}_{1}\right]\right\}^{\frac{1}{2}}.
\end{aligned}\end{equation*}Therefore, we finish the proof of this theorem.

 \end{proof}

\begin{corr}\label{corr-last-1}
Under the same assumption as theorem {\rm \ref{thm4.1}},  eigenvalues of eigenvalue  problem \eqref{L-2-prob}
of $\mathfrak{L}_{II}^{2}$ operator satisfy
\begin{equation}\label{last-1}
\sum_{i=1}^{n}\left\{\left(\Lambda_{i+1}-\Lambda_{1}\right)^{\frac{1}{2}}-\Lambda_{1}^{\frac{1}{2}}\right\} \leq 4 \left(\Lambda_{1}^{\frac{1}{2}} + \Lambda^{\frac{1}{4}}_{1}+\frac{n^{2}}{4}\right).
\end{equation}

\end{corr}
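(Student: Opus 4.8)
The plan is to derive Corollary \ref{corr-last-1} directly from Theorem \ref{thm4.1}, exactly as Corollary \ref{corr1.1} is derived from Theorem \ref{thm1.1}. The starting point is inequality \eqref{z-ineq-2}, which bounds $\sum_{i=1}^{n}(\Lambda_{i+1}-\Lambda_{1})^{1/2}$ by $4\{(A)(B)\}^{1/2}$, where I abbreviate $A=\Lambda_{1}^{1/2}+\Lambda_{1}^{1/4}+\frac{n^{2}}{4}$ and $B=(\frac{n}{2}+1)\Lambda_{1}^{1/2}+\Lambda_{1}^{1/4}+\frac{n^{2}}{4}$. The key observation is that $B = A + \frac{n}{2}\Lambda_{1}^{1/2}$.

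First I would apply the arithmetic–geometric mean inequality to the product $AB$: since $\sqrt{AB}\le \frac{A+B}{2}$, and $A+B = 2A + \frac{n}{2}\Lambda_{1}^{1/2}$, we get $4\sqrt{AB}\le 2(A+B) = 4A + n\Lambda_{1}^{1/2}$. Substituting the definition of $A$, this reads
\begin{equation*}
4\left\{\left(\Lambda_{1}^{\frac{1}{2}}+\Lambda_{1}^{\frac{1}{4}}+\frac{n^{2}}{4}\right)\left[\left(\frac{n}{2}+1\right)\Lambda_{1}^{\frac{1}{2}}+\Lambda_{1}^{\frac{1}{4}}+\frac{n^{2}}{4}\right]\right\}^{\frac{1}{2}}\le n\Lambda_{1}^{\frac{1}{2}}+4\left(\Lambda_{1}^{\frac{1}{2}}+\Lambda_{1}^{\frac{1}{4}}+\frac{n^{2}}{4}\right).
\end{equation*}

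Then I would chain this with \eqref{z-ineq-2} to obtain $\sum_{i=1}^{n}(\Lambda_{i+1}-\Lambda_{1})^{1/2}\le n\Lambda_{1}^{1/2}+4(\Lambda_{1}^{1/2}+\Lambda_{1}^{1/4}+\frac{n^{2}}{4})$, and finally subtract $n\Lambda_{1}^{1/2}=\sum_{i=1}^{n}\Lambda_{1}^{1/2}$ from both sides to land precisely on \eqref{last-1}. There is essentially no obstacle here: the only thing to verify carefully is the algebraic identity $B=A+\frac{n}{2}\Lambda_{1}^{1/2}$ and the elementary AM–GM step, both of which are routine. Since the argument is a verbatim adaptation of the proof of Corollary \ref{corr1.1}, one could even simply remark that the method is identical and omit the details, but spelling out the one-line AM–GM bound makes the deduction transparent.
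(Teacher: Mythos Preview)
Your proposal is correct and follows exactly the approach the paper intends: the paper's own proof of Corollary~\ref{corr-last-1} simply says ``The method of proof is similar to corollary~\ref{corr1.1}. Thus, we omit it,'' and your AM--GM step $4\sqrt{AB}\le 2(A+B)=4A+n\Lambda_{1}^{1/2}$ with $B=A+\tfrac{n}{2}\Lambda_{1}^{1/2}$ is precisely the argument used in the proof of Corollary~\ref{corr1.1}.
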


\begin{proof}
 The method of proof is similar to corollary \ref{corr1.1}. Thus, we omit it.
\end{proof}

\begin{corr}\label{corr-last-2}
Under the same assumption as theorem {\rm \ref{thm4.1}},  for any $n\geq2$,  eigenvalues of clamped plate problem \eqref{L-2-prob}
of the $\mathfrak{L}_{II}^{2}$ operator satisfy

\begin{equation}\begin{aligned}\label{last-2}
\sum_{i=1}^{n}\left(\Lambda_{i+1}-\Lambda_{1}\right)^{\frac{1}{2}}\leq6\left\{\left( \Lambda_{1}^{\frac{1}{2}}+\frac{n^{2}}{6} \right)\left[\left(\frac{n}{3}+1\right) \Lambda_{1}^{\frac{1}{2}}+\frac{n^{2}}{6}\right]\right\}^{\frac{1}{2}}.
\end{aligned}\end{equation}
\end{corr}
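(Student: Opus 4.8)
The plan is to follow the same strategy that produced Theorem \ref{thm4.1}, but starting from Corollary \ref{corr1.2} instead of Theorem \ref{thm1.1}, since Corollary \ref{corr1.2} already packages the translator-relevant curvature quantity $n^{2}H^{2}+3|\nu^{\top}|_{g_{0}}^{2}$ into a single constant $C_{2}$. So the first thing I would do is record the defining relation of a translating soliton, $\textbf{H}=\nu_{0}^{\perp}$, together with the elementary bound $|\nu_{0}^{\top}|_{g_{0}}^{2}\leq|\nu_{0}|_{g_{0}}^{2}=1$, exactly as in \eqref{3.6-1}--\eqref{3.6-2}.

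Next I would estimate the quantity appearing in $C_{2}$ pointwise on $\mathcal{M}^{n}$. Using $n^{2}H^{2}=n^{2}|\nu_{0}^{\perp}|_{g_{0}}^{2}$ and the orthogonal decomposition $|\nu_{0}^{\perp}|_{g_{0}}^{2}+|\nu_{0}^{\top}|_{g_{0}}^{2}=1$, we get
\begin{equation*}
n^{2}H^{2}+3|\nu_{0}^{\top}|_{g_{0}}^{2}=n^{2}|\nu_{0}^{\perp}|_{g_{0}}^{2}+3|\nu_{0}^{\top}|_{g_{0}}^{2}\leq n^{2}\bigl(|\nu_{0}^{\perp}|_{g_{0}}^{2}+|\nu_{0}^{\top}|_{g_{0}}^{2}\bigr)=n^{2},
\end{equation*}
where the inequality uses $n\geq2$ so that $3\leq n^{2}$. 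Hence $\max_{\mathcal{M}^{n}}\bigl(n^{2}H^{2}+3|\nu_{0}^{\top}|_{g_{0}}^{2}\bigr)\leq n^{2}$, and therefore the constant from Corollary \ref{corr1.2}, specialized to $\nu=\nu_{0}$, satisfies
\begin{equation*}
C_{2}=\frac{1}{6}\inf_{\sigma\in\Pi}\max_{\Omega}\bigl(n^{2}H^{2}+3|\nu_{0}^{\top}|_{g_{0}}^{2}\bigr)\leq\frac{n^{2}}{6}.
\end{equation*}

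Finally I would substitute $C_{2}\leq n^{2}/6$ into inequality \eqref{z-ineq-1-1} of Corollary \ref{corr1.2}. Since the right-hand side
\begin{equation*}
6\left\{\left(\Lambda_{1}^{\frac{1}{2}}+C_{2}\right)\left[\left(\frac{n}{3}+1\right)\Lambda_{1}^{\frac{1}{2}}+C_{2}\right]\right\}^{\frac{1}{2}}
\end{equation*}
is monotone nondecreasing in $C_{2}$ (both factors inside the brace are nonnegative and increasing in $C_{2}$), replacing $C_{2}$ by the larger value $n^{2}/6$ only enlarges the bound, which yields exactly \eqref{last-2}. The only genuine point to check is that $\mathfrak{L}_{\nu_{0}}=\mathfrak{L}_{II}$ on a translating soliton, so that Corollary \ref{corr1.2} applies to the $\mathfrak{L}_{II}^{2}$ operator; this is precisely the identification recalled just before Theorem \ref{thm4.1}. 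There is no real obstacle here — the work was already done in Section \ref{sec4}, and the $n\geq2$ hypothesis is exactly what is needed to absorb the coefficient $3$ of $|\nu_{0}^{\top}|_{g_{0}}^{2}$ into $n^{2}$.
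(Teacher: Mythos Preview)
Your proposal is correct and matches the paper's intended approach: the paper omits the proof entirely, saying only that it is ``similar to the proof of corollary~\ref{corr1.2},'' and your argument---applying Corollary~\ref{corr1.2} and then bounding $C_{2}\leq n^{2}/6$ via the translator identity $\textbf{H}=\nu_{0}^{\perp}$ together with $3\leq n^{2}$---is exactly the natural way to fill this in, parallel to how Theorem~\ref{thm4.1} is derived from Theorem~\ref{thm1.1}.
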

\begin{proof}The method of the proof is similar to the proof of corollary \ref{corr1.2}. Thus, we omit it here.\end{proof}
According to corollary \ref{corr-last-3}, we can prove the following corollary.
\begin{corr}\label{corr-last-3}
Under the same assumption as theorem {\rm \ref{thm4.1}},  for any $n\geq2$,  eigenvalues of clamped plate problem \eqref{L-2-prob}
of the $\mathfrak{L}_{II}^{2}$ operator satisfy
\begin{equation}\label{last-3}
\sum_{i=1}^{n}\left\{\left(\Lambda_{i+1}-\Lambda_{1}\right)^{\frac{1}{2}}-\Lambda_{1}^{\frac{1}{2}}\right\} \leq 6\left(\Lambda_{1}^{\frac{1}{2}} + \frac{n^{2}}{6}\right).
\end{equation}

\end{corr}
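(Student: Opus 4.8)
The plan is to mimic the derivation of Corollary \ref{corr1.1} from Theorem \ref{thm1.1}, applied this time to the translating soliton inequality \eqref{last-2} rather than to \eqref{z-ineq-1}. Concretely, starting from \eqref{last-2} I would seek an elementary upper bound of the right-hand side by a linear expression in the two quantities $\Lambda_1^{1/2}$ and $\Lambda_1^{1/2}+\frac{n^2}{6}$, namely
\begin{equation*}
6\left\{\left(\Lambda_1^{\frac12}+\frac{n^2}{6}\right)\left[\left(\frac{n}{3}+1\right)\Lambda_1^{\frac12}+\frac{n^2}{6}\right]\right\}^{\frac12}\leq n\Lambda_1^{\frac12}+6\left(\Lambda_1^{\frac12}+\frac{n^2}{6}\right).
\end{equation*}
Once this is in hand, substituting it into \eqref{last-2} and cancelling the $n\Lambda_1^{1/2}=\sum_{i=1}^n\Lambda_1^{1/2}$ term from the left yields exactly \eqref{last-3}.

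The key step is therefore the scalar inequality just displayed, and the way I would prove it is the standard AM--GM trick already used in the paper: for nonnegative reals $a,b$ one has $\sqrt{ab}\le\frac12(a+b)$, hence $2\sqrt{ab}\le a+b$. Here I would set $a=\left(\frac{n}{3}+1\right)\Lambda_1^{\frac12}+\frac{n^2}{6}$ and $b=9\left(\Lambda_1^{\frac12}+\frac{n^2}{6}\right)$, so that $\sqrt{ab}$ equals $3$ times the bracketed square root in \eqref{last-2} and $2\sqrt{ab}\le a+b$ gives, after multiplying by $3$,
\begin{equation*}
6\left\{\left(\Lambda_1^{\frac12}+\frac{n^2}{6}\right)\left[\left(\frac{n}{3}+1\right)\Lambda_1^{\frac12}+\frac{n^2}{6}\right]\right\}^{\frac12}\le 3\left[\left(\frac{n}{3}+1\right)\Lambda_1^{\frac12}+\frac{n^2}{6}\right]+9\left(\Lambda_1^{\frac12}+\frac{n^2}{6}\right).
\end{equation*}
Collecting terms on the right gives $n\Lambda_1^{\frac12}+3\Lambda_1^{\frac12}+\frac{n^2}{2}+9\Lambda_1^{\frac12}+\frac{3n^2}{2}=n\Lambda_1^{\frac12}+12\Lambda_1^{\frac12}+2n^2$, which is a bit larger than $n\Lambda_1^{\frac12}+6(\Lambda_1^{\frac12}+\frac{n^2}{6})=n\Lambda_1^{\frac12}+6\Lambda_1^{\frac12}+n^2$, so the naive split does not directly close. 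The fix is to choose the split more carefully; since $\sqrt{ab}=\sqrt{(\lambda a)(b/\lambda)}\le\frac12(\lambda a+b/\lambda)$ for any $\lambda>0$, one optimizes $\lambda$ (equivalently, one simply uses $6\sqrt{AB}\le \varepsilon^{-1}A+\varepsilon B$ with $A=9(\Lambda_1^{\frac12}+\frac{n^2}{6})$, $B=(\frac n3+1)\Lambda_1^{\frac12}+\frac{n^2}{6}$ and the value of $\varepsilon$ that reproduces exactly $n\Lambda_1^{\frac12}$ on the $B$-side), which is precisely the choice of $\delta$ made in the proof of Corollary \ref{corr1.2}. With that weighted AM--GM the desired linear bound follows.

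The main obstacle — such as it is — is bookkeeping: one must pick the weight in the weighted AM--GM so that the coefficient of $\Lambda_1^{\frac12}$ coming out of the ``$B$'' term is exactly $n$, matching the $n\Lambda_1^{\frac12}$ that will be subtracted, and then check that the leftover ``$A$'' term is at most $6(\Lambda_1^{\frac12}+\frac{n^2}{6})$. This is a routine one-line optimization identical in spirit to the $\delta$-choice already carried out for Corollary \ref{corr1.2}, which is exactly why the authors write ``The method of the proof is the same as corollary \ref{corr1.1}. Hence, we omit it.'' So the full proof is: (i) invoke \eqref{last-2}; (ii) apply the weighted AM--GM inequality with the weight chosen as above to bound its right-hand side by $n\Lambda_1^{\frac12}+6(\Lambda_1^{\frac12}+\frac{n^2}{6})$; (iii) subtract $\sum_{i=1}^n\Lambda_1^{\frac12}=n\Lambda_1^{\frac12}$ from both sides to arrive at \eqref{last-3}.
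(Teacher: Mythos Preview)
Your overall strategy is exactly the one implied by the paper (mirror the proof of Corollary~\ref{corr1.1}, starting from \eqref{last-2}), but you have made the key scalar step harder than it is and left it unverified. Set $P=\Lambda_1^{1/2}+\tfrac{n^2}{6}$ and $Q=\bigl(\tfrac{n}{3}+1\bigr)\Lambda_1^{1/2}+\tfrac{n^2}{6}$, so that $Q-P=\tfrac{n}{3}\Lambda_1^{1/2}$. Then the target right-hand side satisfies
\[
n\Lambda_1^{1/2}+6P \;=\; 3(Q-P)+6P \;=\; 3P+3Q,
\]
and the desired inequality $6\sqrt{PQ}\le n\Lambda_1^{1/2}+6P$ is nothing but the unweighted AM--GM bound $6\sqrt{PQ}\le 3P+3Q$. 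No optimization over a weight is needed; this is precisely how the paper obtains the analogous step in the proof of Corollary~\ref{corr1.1} (there with coefficients $4$ and $n/2$ in place of $6$ and $n/3$, giving $4\sqrt{P'Q'}\le 2P'+2Q'$). Your first ``naive'' attempt failed only because the split $a=Q$, $b=9P$ is already a weighted choice (it corresponds to $\alpha=9$, $\beta=1$ in $6\sqrt{PQ}\le\alpha P+\beta Q$ with $\alpha\beta\ge 9$), whereas the balanced choice $\alpha=\beta=3$ lands exactly on $n\Lambda_1^{1/2}+6P$. With this correction, your steps (i)--(iii) constitute a complete proof identical to the paper's intended one.
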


\begin{rem} Since  inequality \eqref{z-ineq-2}, \eqref{last-1},  \eqref{last-2} and  \eqref{last-3} are
not dependent on the domain $\Omega$,  they are universal.\end{rem}

\section{Further Applications}\label{sec6}
In this section, we would like to give some further applications of theorem \ref{thm1.1}. Specially, we establish some eigenvalue inequalities on the minimal submanifolds of the
Euclidean spaces, unit spheres and projective spaces.

Firstly, we consider that $(\mathcal{M}^{n},g)$ is an $n$-dimensional complete minimal submanifold isometrically embedded into the $(n+p)$-dimensional Euclidean space $\mathbb{R}^{n+p}$. Then, we know that the mean curvature vanishes. Therefore,  one can deduce the following corollary from theorem \ref{thm1.1}.
\begin{corr}Let $(\mathcal{M}^{n},g)$ be an $n$-dimensional complete minimal submanifold isometrically embedded into the Euclidean space $\mathbb{R}^{n+p}$. Then,
eigenvalues of eigenvalue problem \eqref{L-2-prob} of $\mathfrak{L}_{\nu}^{2}$ operator satisfy
\begin{equation}\begin{aligned}\label{z-ineq-15}
\sum_{i=1}^{n}\left(\Lambda_{i+1}-\Lambda_{1}\right)^{\frac{1}{2}}\leq4\left\{\left( \Lambda_{1}^{\frac{1}{2}}+4\Lambda^{\frac{1}{4}}_{1}C_{3}+4C_{3}^{2}\right)\left[\left(\frac{n}{2}+1\right) \Lambda_{1}^{\frac{1}{2}}+4\Lambda^{\frac{1}{4}}_{1}C_{3}+4C_{3}^{2}\right]\right\}^{\frac{1}{2}},
\end{aligned}\end{equation}
where $C_{3}$ is given by
$$
C_{3}=\frac{1}{4}\max_{\Omega}|\nu^{\top}|_{g_{0}}.
$$  \end{corr}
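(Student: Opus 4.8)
The plan is to specialize Theorem \ref{thm1.1} to the minimal case. Since $(\mathcal{M}^{n},g)$ is an $n$-dimensional complete minimal submanifold isometrically embedded into $\mathbb{R}^{n+p}$, the mean curvature vector vanishes identically, hence $H\equiv 0$ on $\mathcal{M}^{n}$. Consequently the constant
$$
C_{1}=\frac{1}{4}\inf_{\sigma\in\Pi}\max_{\Omega}\left(n^{2}H^{2}\right)=0.
$$
So the first step is simply to record $C_{1}=0$ and substitute this value into inequality \eqref{z-ineq-1}.

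Next I would compare the remaining constant $\widetilde{C}_{1}=\frac{1}{4}\max_{\Omega}|\nu^{\top}|_{g_{0}}$ appearing in Theorem \ref{thm1.1} with the constant $C_{3}=\frac{1}{4}\max_{\Omega}|\nu^{\top}|_{g_{0}}$ in the statement to be proved: they are literally the same quantity, so $\widetilde{C}_{1}=C_{3}$. With $C_{1}=0$ and $\widetilde{C}_{1}=C_{3}$, the right-hand side of \eqref{z-ineq-1} becomes
$$
4\left\{\left(\Lambda_{1}^{\frac{1}{2}}+4C_{3}\Lambda_{1}^{\frac{1}{4}}+4C_{3}^{2}\right)\left[\left(\frac{n}{2}+1\right)\Lambda_{1}^{\frac{1}{2}}+4C_{3}\Lambda_{1}^{\frac{1}{4}}+4C_{3}^{2}\right]\right\}^{\frac{1}{2}},
$$
which is exactly the right-hand side of \eqref{z-ineq-15} (after reordering the three summands inside each factor). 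Thus \eqref{z-ineq-15} follows directly.

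There is essentially no obstacle here: the corollary is a pure specialization of Theorem \ref{thm1.1} obtained by setting the extrinsic curvature contribution to zero. The only point requiring a word of care is the legitimacy of taking $\max_{\Omega}$ (or $\max_{\mathcal{M}^{n}}$) of $|\nu^{\top}|_{g_{0}}$, i.e. that $C_{3}$ is finite; this is guaranteed because $\Omega$ is a bounded domain with (piecewise) smooth boundary and $\nu$ is a fixed constant vector field of $\mathbb{R}^{n+p}$, so $|\nu^{\top}|_{g_{0}}$ is continuous on the compact set $\overline{\Omega}$. Once this is noted, the proof is complete by invoking Theorem \ref{thm1.1} with $H\equiv0$.
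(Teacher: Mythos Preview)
Your proposal is correct and follows exactly the paper's own approach: the corollary is obtained directly from Theorem~\ref{thm1.1} by using that minimality gives $H\equiv 0$, hence $C_{1}=0$, and identifying $\widetilde{C}_{1}$ with $C_{3}$.
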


Next, we consider that $(\mathcal{M}^{n},g)$ is an $n$-dimensional submanifold isometrically immersed in the unit sphere $\mathbb{S}^{n+p-1}(1) \subset \mathbb{R}^{n+p}$  with mean curvature vector $ \overline{\textbf{H}}$. We use $\overline{\Pi}$ to denote the set of all isometric immersions from $\mathcal{M}^{n}$ into  the unit sphere $\mathbb{S}^{n+p-1}(1)$. By theorem \ref{thm1.1}, we have the following corollary.
\begin{corr}\label{corr-6.2}
If $(\mathcal{M}^{n},g)$ be an $n$-dimensional submanifold isometrically immersed in the unit sphere $\mathbb{S}^{n+p-1}(1) \subset \mathbb{R}^{n+p}$  with mean curvature vector $\overline{\textbf{H}}$. Then,
eigenvalues of eigenvalue problem \eqref{L-2-prob} of $\mathfrak{L}_{\nu}^{2}$ operator satisfy
\begin{equation}\begin{aligned}\label{Sub-Sph}
\sum_{i=1}^{n}\left(\Lambda_{i+1}-\Lambda_{1}\right)^{\frac{1}{2}} \leq &4\left\{ \Lambda_{1}^{\frac{1}{2}}+ 4\widetilde{C} _{3} \Lambda_{1}^{\frac{1}{4}}+4\widetilde{C}_{4}^{2}+C_{4}\right\}^{\frac{1}{2}} \\
& \times\left\{\left( \frac{n}{2}+1\right) \Lambda_{1}^{\frac{1}{2}}+4\widetilde{C}_{4} \Lambda_{1}^{\frac{1}{4}}+4\widetilde{C}_{4}^{2}+C_{4}\right\}^{\frac{1}{2}},
\end{aligned}\end{equation}where

$$C_{4}=\frac{1}{4}\inf_{\overline{\sigma}\in\overline{\Pi}}\max_{\Omega}n^{2}(|\overline{\textbf{H}}|^{2}+1),$$
and

$$\widetilde{C}_{4}=\frac{1}{4}\max_{\Omega}|\nu^{\top}|_{g_{0}}.$$
\end{corr}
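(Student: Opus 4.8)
The plan is to reduce Corollary \ref{corr-6.2} to Theorem \ref{thm1.1} by means of the standard trick of composing the immersion into the sphere with the canonical inclusion $\mathbb{S}^{n+p-1}(1)\subset\mathbb{R}^{n+p}$, and then controlling the extra curvature contribution. Concretely, if $\overline{\sigma}:\mathcal{M}^{n}\to\mathbb{S}^{n+p-1}(1)$ is any isometric immersion, then $\iota\circ\overline{\sigma}:\mathcal{M}^{n}\to\mathbb{R}^{n+p}$ is an isometric immersion into the Euclidean space, so Theorem \ref{thm1.1} applies with the Euclidean mean curvature $H$ of this composed immersion. The key identity is the well-known relation between the mean curvature of $\mathcal{M}^{n}$ in $\mathbb{R}^{n+p}$ and the mean curvature $\overline{\textbf{H}}$ of $\mathcal{M}^{n}$ in $\mathbb{S}^{n+p-1}(1)$: since the second fundamental form of $\mathbb{S}^{n+p-1}(1)$ in $\mathbb{R}^{n+p}$ is $-\langle\cdot,\cdot\rangle$ times the outward unit normal (the position vector), one gets $n^{2}H^{2}=n^{2}|\overline{\textbf{H}}|^{2}+n^{2}$, i.e. $n^{2}H^{2}=n^{2}(|\overline{\textbf{H}}|^{2}+1)$. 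I would record this as the first step.

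Next I would feed this identity into the constant $C_{1}$ appearing in Theorem \ref{thm1.1}. Taking the infimum over all isometric immersions $\overline{\sigma}\in\overline{\Pi}$ of $\mathcal{M}^{n}$ into the sphere gives a distinguished family of Euclidean immersions, so
\begin{equation*}
\frac{1}{4}\inf_{\sigma\in\Pi}\max_{\Omega}\left(n^{2}H^{2}\right)\leq\frac{1}{4}\inf_{\overline{\sigma}\in\overline{\Pi}}\max_{\Omega}n^{2}\left(|\overline{\textbf{H}}|^{2}+1\right)=C_{4}.
\end{equation*}
Since $\widetilde{C}_{1}=\frac{1}{4}\max_{\Omega}|\nu^{\top}|_{g_{0}}=\widetilde{C}_{4}$ (the quantity $\nu^{\top}$ is computed intrinsically, unchanged under the composition), substituting $C_{1}\leq C_{4}$ and $\widetilde{C}_{1}=\widetilde{C}_{4}$ into \eqref{z-ineq-1} and using monotonicity of the right-hand side in these constants yields \eqref{Sub-Sph}. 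I would present this substitution as the second and essentially final step.

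The main obstacle — really the only thing requiring care — is verifying the mean-curvature identity $n^{2}H^{2}=n^{2}(|\overline{\textbf{H}}|^{2}+1)$ in the higher-codimension setting and checking that $|\nu^{\top}|_{g_{0}}$ is genuinely unaffected by passing from $\overline{\sigma}$ to $\iota\circ\overline{\sigma}$; both follow from the fact that the tangent bundle $T\mathcal{M}^{n}$ and the induced metric $g$ are identical for the two immersions, while the normal bundle merely acquires the extra line spanned by the position vector, which is $g_{0}$-orthogonal to everything tangent. A minor notational point is that the statement of the corollary mixes $\widetilde{C}_{3}$ and $\widetilde{C}_{4}$; I would simply use $\widetilde{C}_{4}=\frac{1}{4}\max_{\Omega}|\nu^{\top}|_{g_{0}}$ throughout. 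Once these identifications are in place the result is immediate from Theorem \ref{thm1.1}, so no new analytic input is needed.
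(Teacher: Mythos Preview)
Your proposal is correct and follows essentially the same approach as the paper: compose the immersion $f:\mathcal{M}^{n}\to\mathbb{S}^{n+p-1}(1)$ with the canonical inclusion $j:\mathbb{S}^{n+p-1}(1)\hookrightarrow\mathbb{R}^{n+p}$, use the identity $|\textbf{H}|^{2}=|\overline{\textbf{H}}|^{2}+1$ for the mean curvature of the composed immersion, and apply Theorem~\ref{thm1.1}. Your additional remarks about monotonicity in the constants, the invariance of $|\nu^{\top}|_{g_{0}}$, and the $\widetilde{C}_{3}/\widetilde{C}_{4}$ typo are all accurate and make the argument more explicit than the paper's own terse proof.
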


\begin{proof}
Since the unit sphere can be canonically imbedded into Euclidean space, we have
the following diagram:

\begin{equation*}\begin{aligned}
\xymatrix{
  \mathcal{M}^{n}\ar[dr]_{j\circ f} \ar[r]^{f}
                & \mathbb{S}^{n+p-1} \ar[d]^{j}  \\
                & \mathbb{R}^{n+p}              }\end{aligned} \end{equation*}
where $j: \mathbb{S}^{n+p-1}(1)\rightarrow \mathbb{R}^{n+p}$ is the canonical imbedding from the unit sphere $S^{n+p-1}(1)$ into $\mathbb{R}^{n+p},$ and  $f: \mathcal{M}^{n}\rightarrow \mathbb{S} ^{n+p-1}(1)$ is an isometrical immersion. Then, $j \circ f: \mathcal{M}^{n} \rightarrow \mathbb{R}^{n+p}$ is an isometric immersion from $\mathcal{M}^{n}$ to $\mathbb{R}^{n+p} .$ Let $\overline{\textbf{H}}$ and $\textbf{H}$ be the mean curvature vector fields of $f$ and $j \circ f,$ respectively; then
\[
\left| \textbf{H}\right|^{2}=|\overline{\textbf{H}}|^{2}+1.
\]
Applying  theorem \ref{thm1.1} directly, we can get \eqref{Sub-Sph}. Therefore, we finish the proof of corollary \ref{corr-6.2}.\end{proof}

In particular, we assume that $(\mathcal{M}^{n},g)$ is an $n$-dimensional unit sphere $\mathbb{S}^{n}(1)$, and then, the mean curvature equals to $1$. This is, $\left| \overline{\textbf{H}}\right|=0$, and thus we have $\left| \textbf{H}\right|=1$. Furthermore, by theorem \ref{thm1.1}, we obtain the following corollary.

\begin{corr}Let $(\mathcal{M}^{n},g)$ be an $n$-dimensional unit sphere $\mathbb{S}^{n}(1)$ and $\Omega$ is a bounded domain on  $\mathbb{S}^{n}(1)$. Then,
eigenvalues of eigenvalue problem \eqref{L-2-prob} of $\mathfrak{L}_{\nu}^{2}$ operator satisfy

\begin{equation*}\begin{aligned}
\sum_{i=1}^{n}\left(\Lambda_{i+1}-\Lambda_{1}\right)^{\frac{1}{2}}\leq4\left\{\left( \Lambda_{1}^{\frac{1}{2}}+4 C_{5}\Lambda^{\frac{1}{4}}_{1}+\frac{n^{2}}{4}+4C_{5}^{2}\right)\left[\left(\frac{n}{2}+1\right) \Lambda_{1}^{\frac{1}{2}}+4 C_{5}\Lambda^{\frac{1}{4}}_{1}+\frac{n^{2}}{4}+4C_{5}^{2}\right]\right\}^{\frac{1}{2}},
\end{aligned}\end{equation*}
where $C_{5}$ is given by

$$
C_{5}=\frac{1}{4}\max_{\Omega}|\nu^{\top}|_{g_{0}}.
$$
\end{corr}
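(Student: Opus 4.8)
The plan is to derive this corollary as a direct specialization of Theorem \ref{thm1.1} (or rather, of Corollary \ref{corr-6.2}), using the fact that the unit sphere $\mathbb{S}^{n}(1)$ sits totally umbilically inside $\mathbb{S}^{n+p-1}(1)\subset\mathbb{R}^{n+p}$ via the standard chain of isometric immersions. First I would set up the composition $\mathcal{M}^{n}=\mathbb{S}^{n}(1)\hookrightarrow\mathbb{S}^{n+p-1}(1)\xrightarrow{j}\mathbb{R}^{n+p}$, observe that the immersion of $\mathbb{S}^{n}(1)$ into $\mathbb{S}^{n+p-1}(1)$ is totally geodesic so that $\overline{\textbf{H}}=0$, hence $|\overline{\textbf{H}}|=0$, and then invoke the codimension formula $|\textbf{H}|^{2}=|\overline{\textbf{H}}|^{2}+1=1$ already established in the proof of Corollary \ref{corr-6.2} for the mean curvature $\textbf{H}$ of $\mathbb{S}^{n}(1)$ inside $\mathbb{R}^{n+p}$.

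Next I would feed $n^{2}H^{2}=n^{2}$ into the constant $C_{1}=\frac{1}{4}\inf_{\sigma\in\Pi}\max_{\Omega}(n^{2}H^{2})$ appearing in Theorem \ref{thm1.1}. Since the immersion of $\mathbb{S}^n(1)$ into $\mathbb R^{n+p}$ realizing $H\equiv1$ is the standard one, and $H$ is constant equal to $1$, we get $C_{1}\le\frac{n^{2}}{4}$, while $\widetilde{C}_{1}=\frac{1}{4}\max_{\Omega}|\nu^{\top}|_{g_{0}}=C_{5}$ is unchanged. Substituting $C_{1}=\frac{n^{2}}{4}$ and $\widetilde{C}_{1}=C_{5}$ into inequality \eqref{z-ineq-1} yields
\begin{equation*}\begin{aligned}
\sum_{i=1}^{n}\left(\Lambda_{i+1}-\Lambda_{1}\right)^{\frac{1}{2}}\leq4\left\{\left( \Lambda_{1}^{\frac{1}{2}}+4 C_{5}\Lambda^{\frac{1}{4}}_{1}+\frac{n^{2}}{4}+4C_{5}^{2}\right)\left[\left(\frac{n}{2}+1\right) \Lambda_{1}^{\frac{1}{2}}+4 C_{5}\Lambda^{\frac{1}{4}}_{1}+\frac{n^{2}}{4}+4C_{5}^{2}\right]\right\}^{\frac{1}{2}},
\end{aligned}\end{equation*}
which is exactly the claimed inequality. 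Alternatively one can start from \eqref{Sub-Sph} in Corollary \ref{corr-6.2} and set $|\overline{\textbf{H}}|=0$, so that $C_{4}=\frac{1}{4}\max_{\Omega}n^{2}=\frac{n^{2}}{4}$ and $\widetilde{C}_{4}=C_{5}$, arriving at the same conclusion.

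In truth there is no real obstacle here — the corollary is a bookkeeping specialization. The only point requiring a line of care is the identification $|\textbf{H}|=1$ for $\mathbb{S}^n(1)\subset\mathbb R^{n+p}$: one should note that the totally geodesic inclusion $\mathbb{S}^n(1)\hookrightarrow\mathbb{S}^{n+p-1}(1)$ contributes nothing, and the umbilic inclusion $\mathbb{S}^{n+p-1}(1)\hookrightarrow\mathbb R^{n+p}$ contributes unit mean curvature, so that by the composition formula used in Corollary \ref{corr-6.2} we get $|\textbf{H}|^2=0+1=1$ and hence $n^2H^2=n^2$ pointwise on $\Omega$; taking the infimum over $\Pi$ (which is attained, and in any case bounded above, by this standard immersion) gives $C_1\le n^2/4$. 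Everything else is pure substitution, and I would simply remark that the proof is ``similar to that of Corollary \ref{corr-6.2}'' and record the displayed inequality. $$\eqno\Box$$
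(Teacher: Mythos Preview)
Your proposal is correct and follows essentially the same route as the paper: the paper simply remarks (in the sentence preceding the corollary) that for $\mathcal{M}^n=\mathbb{S}^n(1)$ one has $|\overline{\textbf{H}}|=0$, hence $|\textbf{H}|=1$, and then invokes Theorem~\ref{thm1.1}. Your write-up just makes the substitution $C_{1}\le n^{2}/4$, $\widetilde{C}_{1}=C_{5}$ explicit, which is exactly the intended specialization.
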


Next, let us recall some results for submanifolds on the projective spaces. For more details, we refer the readers to \cite{Chb,CL}. Let $\mathbb{F}$ denote the field $\mathbb{R}$ of real numbers,
the field $\mathbb{C}$ of complex numbers or the field $\mathbb{Q}$ of quaternions. For convenience, we
introduce the integers

\begin{equation}\label{df}
d_{\mathbb{F}}=\operatorname{dim}_{\mathbb{R}}\mathbb{F}=\left\{\begin{array}{ll}
1, & \text { if } \mathbb{F} = \mathbb{R}; \\
2, & \text { if } \mathbb{F} = \mathbb{C}; \\
4, & \text { if } \mathbb{F} = \mathbb{Q}.
\end{array}\right.
\end{equation}Let us denote by $\mathbb{F}P^{m}$ the $m$-dimensional real projective space if $\mathbb{F}= \mathbb{R}$, the complex projective space with real dimension $2 m$ if $\mathbb{F}= \mathbb{C}$, and the quaternionic projective space with real dimension $4 m$ if $\mathbb{F}= \mathbb{Q}$, respectively. Here, the manifold $\mathbb{F}P^{m}$ carries a canonical metric so that the Hopf fibration $$\pi: \mathbb{S}^{d_{\mathbb{F}} \cdot(m+1)-1} \subset \mathbb{F}^{m+1} \rightarrow \mathbb{F}P^{m}$$ is a Riemannian submersion.
Hence, the sectional curvature of $\mathbb{R}P^{m}$ is $1$, the holomorphic sectional curvature is $4$ and the quaternion sectional  curvature is $4$. Let $$\mathcal{H}_{m+1}(\mathbb{F})=\left\{A \in \mathcal{A} _{m+1}(\mathbb{F}) \mid A^{*}:=\overline{^{t} A}=A\right\}$$ be the vector space of $(m+1) \times(m+1)$ Hermitian
matrices with coefficients in the field $\mathbb{F}$, where $\mathcal{A}$ denotes the space of all $(m+1)\times(m+1)$ matrices over $\mathbb{F}$. We can endow $\mathcal{H}_{m+1}( \mathbb{F})$ with the inner product
\[
\langle A, B\rangle=\frac{1}{2} \operatorname{tr}(A B),
\]
where tr $(\cdot)$ denotes the trace for the given $(m+1) \times(m+1)$ matrix. Clearly, the map $\psi: \mathbb{S} ^{d_{\mathbb{F}} \cdot(m+1)-1} \subset \mathbb{F} ^{m+1} \rightarrow$
$\mathcal{H}_{m+1}(\mathbb{F})$ given by
\[
\psi=\left(\begin{array}{llll}
\left|z_{0}\right|^{2} & z_{0} \overline{z_{1}} & \cdots & z_{0} \overline{z_{m}} \\
z_{1} \overline{z_{0}} & \left|z_{1}\right|^{2} & \cdots & z_{1} \overline{z_{m}} \\
\cdots & \cdots & \cdots & \cdots \\
z_{m} \overline{z_{0}} & z_{m} \overline{z_{1}} & \cdots & \left|z_{m}\right|^{2}
\end{array}\right)
\]
induces through the Hopf fibration an isometric embedding $\psi$ from $\mathbb{F}P^{m}$ into $\mathcal{H}_{m+1}( \mathbb{F}) .$ Moreover, $\psi\left( \mathbb{F}P^{m}\right)$ is a minimal submanifold of the hypersphere $\mathbb{S} \left(\frac{I}{m+1}, \sqrt{\frac{m}{2(m+1)}}\right)$ of $\mathcal{H}_{m+1}( \mathbb{F})$ with radius $\sqrt{\frac{m}{2(m+1)}}$ and
center $\frac{I}{m+1}$, where $I$ is the identity matrix.
In addition, we need a result as follows (cf. lemma 6.3 in Chapter 4 in \cite{Chb}):

\begin{lem} \label{lem-proj}Let $f: \mathcal{M}^{n}  \rightarrow \mathbb{F} P^{\text {m }}$ be an isometric immersion, and let $\widehat{\textbf{H}}$ and $\textbf{H}$ be the mean curvature
vector fields of the immersions $f$ and $\psi \circ f,$ respectively (here $\psi$ is the induced isometric embedding $\psi$ from $\mathbb{F}P^{m}$ into $\mathcal{H}_{m+1}( \mathbb{F})$ explained above). Then, we have
\[
\left| \textbf{H}\right|^{2}=|\widehat{\textbf{H}}|^{2}+\frac{4(n+2)}{3 n}+\frac{2}{3 n^{2}} \sum_{i \neq j} K\left(e_{i}, e_{j}\right),
\]
where $\left\{e_{i}\right\}_{i=1}^{n}$ is a local orthonormal basis of $\Gamma(T \mathcal{M}^{n})$ and $K$ is the sectional curvature of $\mathbb{F}P^{m}$ expressed $b y$
\[
K\left(e_{i}, e_{j}\right)=\left\{\begin{array}{ll}
1, & \text { if } \mathbb{F} = \mathbb{R}; \\
1+3\left(e_{i} \cdot J e_{j}\right)^{2}, & \text { if } \mathbb{F} = \mathbb{C}; \\
1+\sum_{r=1}^{3} 3\left(e_{i} \cdot J_{r} e_{j}\right)^{2}, & \text { if } \mathbb{F} = \mathbb{Q},
\end{array}\right.
\]
where $J$ is the complex structure of $\mathbb{C}P^{m}$ and $J_{r}$ is the quaternionic structure of $\mathbb{Q}P ^{m}$.\end{lem}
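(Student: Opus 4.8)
The plan is to deduce the formula from the standard composition rule for second fundamental forms. Write $\Psi:=\psi\circ f:\mathcal{M}^{n}\to\mathcal{H}_{m+1}(\mathbb{F})$ and let $B^{f}$, $B^{\psi}$, $B^{\Psi}$ be the second fundamental forms of $f:\mathcal{M}^{n}\to\mathbb{F}P^{m}$, of the standard embedding $\psi:\mathbb{F}P^{m}\to\mathcal{H}_{m+1}(\mathbb{F})$, and of $\Psi$, respectively. Applying the Gauss formula in each step gives, for $X,Y\in\Gamma(T\mathcal{M}^{n})$,
\[
B^{\Psi}(X,Y)=\psi_{*}B^{f}(X,Y)+B^{\psi}(f_{*}X,f_{*}Y),
\]
and the two summands are orthogonal in $\mathcal{H}_{m+1}(\mathbb{F})$, since $B^{f}(X,Y)$ is tangent to $\mathbb{F}P^{m}$ while $B^{\psi}(f_{*}X,f_{*}Y)$ is normal to it. Tracing over a local orthonormal frame $\{e_{i}\}_{i=1}^{n}$ of $T\mathcal{M}^{n}$ (and identifying $e_{i}$ with $f_{*}e_{i}$) and using that $\psi$ is isometric, we obtain
\[
n^{2}|\textbf{H}|^{2}=n^{2}|\widehat{\textbf{H}}|^{2}+\Big|\sum_{i=1}^{n}B^{\psi}(e_{i},e_{i})\Big|^{2},
\]
so the problem reduces to computing $\big|\sum_{i}B^{\psi}(e_{i},e_{i})\big|^{2}$ for the first standard embedding $\psi$.

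First I would apply the Gauss equation of $\psi$; since $\mathcal{H}_{m+1}(\mathbb{F})$ is flat, for $i\neq j$ it reads $K(e_{i},e_{j})=\langle B^{\psi}(e_{i},e_{i}),B^{\psi}(e_{j},e_{j})\rangle-|B^{\psi}(e_{i},e_{j})|^{2}$, where $K$ is the sectional curvature of $\mathbb{F}P^{m}$. Summing over the ordered pairs $i\neq j$ and using $\sum_{i\neq j}\langle B^{\psi}(e_{i},e_{i}),B^{\psi}(e_{j},e_{j})\rangle=\big|\sum_{i}B^{\psi}(e_{i},e_{i})\big|^{2}-\sum_{i}|B^{\psi}(e_{i},e_{i})|^{2}$ yields
\[
\Big|\sum_{i=1}^{n}B^{\psi}(e_{i},e_{i})\Big|^{2}=\sum_{i\neq j}K(e_{i},e_{j})+\sum_{i,j=1}^{n}|B^{\psi}(e_{i},e_{j})|^{2}.
\]
Plugging this into the previous display, the lemma becomes equivalent to the purely local identity
\[
\sum_{i,j=1}^{n}|B^{\psi}(e_{i},e_{j})|^{2}=\frac{4n(n+2)}{3}-\frac{1}{3}\sum_{i\neq j}K(e_{i},e_{j}).
\]

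To establish this I would use the explicit structure of the first standard minimal immersion. It factors through the Hopf construction as $\mathbb{F}P^{m}\hookrightarrow\mathbb{S}\big(\tfrac{I}{m+1},r\big)\hookrightarrow\mathcal{H}_{m+1}(\mathbb{F})$ with $r^{2}=\tfrac{m}{2(m+1)}$ and $\psi(\mathbb{F}P^{m})$ \emph{minimal} in the hypersphere, whence $B^{\psi}(X,Y)=\beta(X,Y)-\tfrac{1}{r^{2}}\langle X,Y\rangle\,\textbf{r}$, where $\beta$ is the (parallel, trace-free) second fundamental form of $\mathbb{F}P^{m}$ inside the hypersphere and $\textbf{r}$ is the position vector based at the center $\tfrac{I}{m+1}$; since $\beta$ is tangent to the hypersphere and $\textbf{r}$ normal to it, one gets $\sum_{i,j}|B^{\psi}(e_{i},e_{j})|^{2}=\sum_{i,j}|\beta(e_{i},e_{j})|^{2}+\tfrac{n}{r^{2}}$. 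The remaining sum is evaluated from the known closed form of $\beta$ for each $\mathbb{F}=\mathbb{R},\mathbb{C},\mathbb{Q}$, in which $\langle\beta(X,Y),\beta(Z,W)\rangle$ is a universal expression in the metric and, for $\mathbb{C}$ and $\mathbb{Q}$, in the structure tensors $J$, resp.\ $J_{1},J_{2},J_{3}$; substituting the orthonormal frame, recognizing $3(e_{i}\cdot Je_{j})^{2}$ (resp.\ $\sum_{r}3(e_{i}\cdot J_{r}e_{j})^{2}$) as the curvature correction inside $K(e_{i},e_{j})$, and using $r^{2}=\tfrac{m}{2(m+1)}$, the $m$-dependence cancels and the displayed identity falls out. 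The main obstacle is precisely this last computation — treating the three division algebras uniformly and bookkeeping the algebraic identities satisfied by the parallel second fundamental form of the standard embedding; it is carried out in detail in Chapter 4 of \cite{Chb}, and we quote the result from Lemma 6.3 there.
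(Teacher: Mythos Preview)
Your outline is correct: the composition rule $B^{\Psi}=\psi_{*}B^{f}+B^{\psi}$, the orthogonality of the two summands, and the Gauss equation of $\psi$ in flat $\mathcal{H}_{m+1}(\mathbb{F})$ together reduce the lemma to the identity $\sum_{i,j}|B^{\psi}(e_{i},e_{j})|^{2}=\tfrac{4n(n+2)}{3}-\tfrac{1}{3}\sum_{i\neq j}K(e_{i},e_{j})$, which is exactly the content of the explicit computation in Chen's book that you cite.

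As for comparison with the paper: there is nothing to compare. The paper does not prove this lemma at all --- it simply states it with the parenthetical ``(cf.\ lemma 6.3 in Chapter 4 in \cite{Chb})'' and moves on. You have supplied considerably more than the paper does, namely a correct reduction to the key algebraic identity for $B^{\psi}$, before invoking the same reference for the remaining case-by-case verification. So your proposal is strictly more detailed than the paper's treatment, and both terminate at the identical citation.
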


Therefore, one can infer from lemma \ref{lem-proj} that

\begin{equation}\label{H-3H}
\left|\textbf{H}\right|^{2}=\left\{\begin{array}{ll}
|\widehat{\textbf{H}}|^{2}+\frac{2(n+1)}{2 n}, & \text { for } \mathbb{R} P^{m}; \\
|\widehat{\textbf{H}}|^{2}+\frac{2(n+1)}{2 n}+\frac{2}{n^{2}} \sum_{i, j=1}^{n}\left(e_{i} \cdot J e_{j}\right)^{2} \leq|\widehat{\textbf{H}}|^{2}+\frac{2(n+2)}{n}, & \text { for } \mathbb{C} P^{m}; \\
|\widehat{\textbf{H}}|^{2}+\frac{2(n+1)}{2 n}+\frac{2}{n^{2}} \sum_{i, j=1}^{n} \sum_{r=1}^{3}\left(e_{i} \cdot J_{r} e_{j}\right)^{2} \leq|\widehat{\textbf{H}}|^{2}+\frac{2(n+4)}{n}, & \text { for } \mathbb{Q} P^{m}.
\end{array}\right.
\end{equation}Hence, it follows from \eqref{H-3H} that,

\begin{equation}\label{HH}
\left| \textbf{H}\right|^{2} \leq \widehat{H}^{2}+\frac{2\left(n+d_{\mathbb{F}}\right)}{n},
\end{equation}where $\widehat{H}$ denotes the mean curvature of $\mathcal{M}^{n}$ isometrically immersed into the projective space $\mathbb{F}P^{m}$, this is to say that,$$\widehat{H}=|\widehat{\textbf{H}}|.$$
We note that the equality in \eqref{HH} holds if and only if $\mathcal{M}^{n}$ is a complex submanifold of $\mathbb{C}P^{m}$ (for the case $\mathbb{C}P^{m}$ ) while $n \equiv 0(\bmod 4)$ and $\mathcal{M}^{n}$ is an invariant submanifold of $\mathbb{Q}P^{m}\left(\text { for the case } \mathbb{Q}P^{m}\right)$.  We use $\widehat{\Pi}$ to denote the set of all isometric immersions from $\mathcal{M}^{n}$ into a projective space $\mathbb{F}P^{m}$. Then, from theorem \ref{thm1.1}, we can prove the following corollary.
\begin{corr}\label{corr-6.4}
If $\mathcal{M}^{n}$ is isometrically immersed in a projective space $\mathbb{F}P^{m}$ with mean curvature vector $\widehat{\textbf{H}}$, Then,
eigenvalues of eigenvalue problem \eqref{L-2-prob} of $\mathfrak{L}_{\nu}^{2}$ operator satisfy

\begin{equation}\label{proj-inequa}
\begin{aligned}
&\sum_{i=1}^{n}\left(\Lambda_{i+1}-\Lambda_{1}\right)^{\frac{1}{2}} \\&\leq 4\left\{ \left(\Lambda_{1}^{\frac{1}{2}}+ 4\widetilde{C} _{5} \Lambda_{1}^{\frac{1}{4}}+4\widetilde{C}_{6}^{2}+C_{6}\right)
  \left[\left( \frac{n}{2}+1\right) \Lambda_{1}^{\frac{1}{2}}+4\widetilde{C}_{6} \Lambda_{1}^{\frac{1}{4}}+4\widetilde{C}_{6}^{2}+C_{6}\right]\right\}^{\frac{1}{2}},
\end{aligned}
\end{equation}
where $C_{6}$ is given by

$$
C_{6}=\frac{1}{4}\inf _{\widehat{\sigma} \in \widehat{\Pi}}\max_{\Omega}\left(n^{2}|\widehat{\textbf{H}}|^{2}+2n\left(n+d_{\mathbb{F} }\right)\right),
$$  and $\widetilde{C}_{6}$ is given by $$\widetilde{C}_{6}=\frac{1}{4}\max_{\Omega} |\nu^{\top}|_{g_{0}},$$ and
$d_{\mathbb{F}}=\operatorname{dim}_{ \mathbb{R} } \mathbb{F}$ defined by \eqref{df}.

\end{corr}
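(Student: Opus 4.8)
The plan is to follow exactly the same strategy used for Corollary \ref{corr-6.2}: realize the immersion into $\mathbb{F}P^{m}$ as the first leg of a composition with the Codazzi (standard) embedding $\psi:\mathbb{F}P^{m}\hookrightarrow\mathcal{H}_{m+1}(\mathbb{F})$, which is isometric, and then invoke Theorem \ref{thm1.1} for the composed embedding into the Euclidean space $\mathcal{H}_{m+1}(\mathbb{F})$. Concretely, I would set up the diagram
\[
\mathcal{M}^{n}\xrightarrow{\;f\;}\mathbb{F}P^{m}\xrightarrow{\;\psi\;}\mathcal{H}_{m+1}(\mathbb{F}),
\]
with $\psi\circ f:\mathcal{M}^{n}\to\mathcal{H}_{m+1}(\mathbb{F})$ an isometric immersion into a Euclidean space of dimension $d_{\mathbb{F}}\cdot\frac{(m+1)(m+2)}{2}$ (the real dimension of $\mathcal{H}_{m+1}(\mathbb{F})$). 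Since $\mathfrak{L}_{\nu}^{2}$-eigenvalues of the clamped plate problem \eqref{L-2-prob} depend only on the intrinsic metric $g$ on $\mathcal{M}^{n}$ and on the ambient constant vector field $\nu$ (through $\nu^{\top}$), the left-hand side of \eqref{proj-inequa} is unchanged, and Theorem \ref{thm1.1} applies verbatim to $\psi\circ f$ with mean curvature $\mathbf{H}$ of $\psi\circ f$.

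The key step is then the curvature bookkeeping. From Lemma \ref{lem-proj} together with \eqref{H-3H} and \eqref{HH}, we have the pointwise bound
\[
|\mathbf{H}|^{2}\leq\widehat{H}^{2}+\frac{2\bigl(n+d_{\mathbb{F}}\bigr)}{n},
\]
where $\widehat{H}=|\widehat{\mathbf{H}}|$ is the mean curvature of $f:\mathcal{M}^{n}\to\mathbb{F}P^{m}$. Multiplying by $n^{2}$ gives $n^{2}|\mathbf{H}|^{2}\leq n^{2}\widehat{H}^{2}+2n(n+d_{\mathbb{F}})$, so that the constant $C_{1}$ of Theorem \ref{thm1.1}, evaluated for $\psi\circ f$ and then optimized over all isometric immersions of $\mathcal{M}^{n}$ into Euclidean space (which includes in particular those factoring through projective spaces), satisfies
\[
C_{1}=\frac{1}{4}\inf_{\sigma\in\Pi}\max_{\Omega}\bigl(n^{2}|\mathbf{H}|^{2}\bigr)\leq\frac{1}{4}\inf_{\widehat{\sigma}\in\widehat{\Pi}}\max_{\Omega}\bigl(n^{2}|\widehat{\mathbf{H}}|^{2}+2n(n+d_{\mathbb{F}})\bigr)=C_{6}.
\]
Meanwhile $\widetilde{C}_{1}=\frac{1}{4}\max_{\Omega}|\nu^{\top}|_{g_{0}}=\widetilde{C}_{6}$ is literally the same quantity, since $\nu^{\top}$ is the tangential projection onto $T\mathcal{M}^{n}$ and does not care about the ambient realization. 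Substituting $C_{1}\leq C_{6}$ and $\widetilde{C}_{1}=\widetilde{C}_{6}$ into \eqref{z-ineq-1} and using that the right-hand side of \eqref{z-ineq-1} is monotone increasing in both $C_{1}$ and $\widetilde{C}_{1}$, we obtain \eqref{proj-inequa}.

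The only mild subtlety — and what I expect to be the main point to be careful about rather than a genuine obstacle — is checking that $\psi\circ f$ really is an isometric immersion and that the normalization of the metric on $\mathcal{H}_{m+1}(\mathbb{F})$ chosen in the excerpt (namely $\langle A,B\rangle=\frac12\operatorname{tr}(AB)$) is the one for which $\psi$ is isometric, so that Lemma \ref{lem-proj}'s formula for $|\mathbf{H}|^{2}$ is the correct second fundamental form identity for the composition; this is precisely the content cited from Lemma 6.3 of Chapter 4 in \cite{Chb}, and once it is granted, the argument is a direct substitution. One small indexing typo in the statement (the inequality writes $\widetilde{C}_{5}$ in the first factor where $\widetilde{C}_{6}$ is evidently meant) should be read as $\widetilde{C}_{6}$ throughout, consistently with the definition of $C_{6}$ and $\widetilde{C}_{6}$ given just below \eqref{proj-inequa}. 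With these identifications, the proof is complete.
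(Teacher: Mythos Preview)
Your proposal is correct and follows essentially the same approach as the paper: compose the immersion $f:\mathcal{M}^{n}\to\mathbb{F}P^{m}$ with the canonical isometric embedding $\psi:\mathbb{F}P^{m}\hookrightarrow\mathcal{H}_{m+1}(\mathbb{F})$, invoke the mean curvature bound \eqref{HH} to replace $C_{1}$ by $C_{6}$, and apply Theorem~\ref{thm1.1}. Your write-up is in fact more detailed than the paper's (which simply cites \eqref{HH} and Theorem~\ref{thm1.1} after setting up the diagram), and your observation about the $\widetilde{C}_{5}$/$\widetilde{C}_{6}$ typo is accurate.
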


\begin{proof}
Since there is a canonical imbedding from $\mathbb{F} P^{m}( \mathbb{F} = \mathbb{R} , \mathbb{C} , \mathbb{Q} )$ to Euclidean space $\mathcal{H}_{m+1}( \mathbb{F} )$, then for compact manifold $\mathcal{M}^{n}$ isometrically immersed into the projective space $\mathbb{F} P^{m},$ we have the following diagram:

\begin{equation*}\begin{aligned}
\xymatrix{
  \mathcal{M}^{n}\ar[dr]_{\psi\circ f} \ar[r]^{f}
                & \mathbb{F}P^{m} \ar[d]^{\psi}  \\
                &\mathcal{H}_{m+1}(\mathbb{F})             }\end{aligned} \end{equation*}
where $\psi: \mathbb{F}P^{m} \rightarrow \mathcal{H}_{m+1}( \mathbb{F})$ denotes the canonical imbedding from $\mathbb{F}P^{m}$ into $\mathcal{H}_{m+1}( \mathbb{F}),$ and   $f: \mathcal{M}^{n} \rightarrow$
$\mathbb{F}P^{m}$ denotes  an isometric immersion from $\mathcal{M}^{n}$ to $\mathbb{F}P^{m}$. Then, $\psi \circ f: \mathcal{M}^{n} \rightarrow \mathcal{H}_{m+1}( \mathbb{F})$ is an isometric immersion from $\mathcal{M}^{n}$ to $\mathcal{H}_{m+1}( \mathbb{F})$. Applying \eqref{HH} and theorem \ref{thm1.1}, one can get \eqref{proj-inequa}. Thus, it completes the proof of corollary \ref{corr-6.4}.

\end{proof}

\end{document}